\pgfplotsset{compat=1.11}
\newlength{\abstractwidth}
\flushbottom \thispagestyle{empty} \pagestyle{plain}
\renewcommand{\thanks}[1]{\footnote{#1}} 
\newcommand{\be}{\begin{equation}}
\newcommand{\bea}{\begin{eqnarray}}
\newcommand{\eea}{\end{eqnarray}} \newcommand{\ee}{\end{equation}}
 \def\ba{\begin{eqnarray}}
\def\ea{\end{eqnarray}}
\def\[{{\bf [}}
\def\]{{\bf ]}}
\begin{document}
\newtheorem{theorem}{Theorem} [section]
\newtheorem{proposition}[theorem]{Proposition} 
\newtheorem{lemma}[theorem]{Lemma} 
\newtheorem{corollary}[theorem]{Corollary} 
\newtheorem{definition}[theorem]{Definition} 
\newtheorem{conjecture}[theorem]{Conjecture} 
\newtheorem{example}[theorem]{Example} 
\newtheorem{claim}[theorem]{Claim} 
\newtheorem{remark}[theorem]{Remark} 
\newtheorem*{proposition*}{Proposition}

\begin{centering}
 
\textup{\LARGE\bf Inverse Mean Curvature Flow of \\ \vspace{0.3cm} Rotationally Symmetric Hypersurfaces}

\vspace{10 mm}

\textnormal{\large Brian Harvie} \\

\vspace{.5 in}
\begin{abstract}
{\small We prove that the Inverse Mean Curvature Flow of a non-star-shaped, mean-convex embedded sphere in $\mathbb{R}^{n+1}$ with symmetry about an axis and sufficiently long, thick necks exists for all time and homothetically converges to a round sphere as $t \rightarrow \infty$. Our approach is based on a localized version of the parabolic maximum principle.

We also present two applications of this result. The first is an extension of the Minkowski inequality to the corresponding non-star-shaped, mean-convex domains in $\mathbb{R}^{n+1}$. The second is a connection between IMCF and minimal surface theory. Based on previous work by Meeks and Yau in \cite{Meeks1982TheEO} and using foliations by IMCF, we establish embeddedness of the solution to Plateau's problem and a finiteness property of stable immersed minimal disks for certain Jordan curves in $\mathbb{R}^{3}$.
}

\end{abstract}


\end{centering}

\section{Introduction}
\hspace{0.5cm} 


The dynamical stability of the round sphere under extrinsic geometric flows is both crucial to applications in geometry and topology and a fascinating study in its own right. Work on the convex stability of round spheres in Mean Curvature Flow (MCF) goes back to papers by Gage-Hamilton in \cite{gage1986} and Huisken in \cite{huisken1984}. On the other hand, the most prominent expanding extrinsic flow is the Inverse Mean Curvature Flow (IMCF). Given a closed, oriented $n$-dimensional smooth manifold $N$, a one-parameter family of $C^{\infty}$ immersions $F: N \times [0,T) \rightarrow \mathbb{R}^{n+1}$ moves by IMCF if

\begin{eqnarray} \label{IMCF}
    \frac{\partial F}{\partial t} (x,t) &=& \frac{1}{H} \nu (x,t), \hspace{1cm} (x,t) \in N \times [0,T),
\end{eqnarray}
where $\nu$ is the outward-pointing unit normal and $H > 0$ the mean curvature of $N_{t}=F_{t}(N)$. Gerhardt showed in \cite{claus} that IMCF flows star-shaped initial data $N_{0}=F_{0}(\mathbb{S}^{n}) \subset \mathbb{R}^{n+1}$ into round spheres after rescaling in the sense of $C^{\infty}$ convergence, see also Urbas in \cite{Urbas} and Huisken and Ilmanen in \cite{huisken2}. This result is striking given the highly nonlinear profile of \eqref{IMCF}, and so we are interested in understanding the dynamical stability of IMCF in the non-star-shaped regime.

This paper studies the evolution of mean-convex spheres with rotational symmetry about an axis under IMCF. Rotationally symmetric surfaces have been a rich source of examples of non-trivial singularities in the case of MCF, see \cite{angenent}, \cite{degenerate}, \cite{zhou}, \cite{grayson}. As with star-shaped IMCF, the main difficulty in this setting is that the evolution equations are fully nonlinear. In addition, many quantities behave differently near the axis of rotation than they do away from it, and so we need a way to localize estimates to different regions of the evolving surface. Ultimately, we show like with star-shaped IMCF that any $H>0$ rotationally symmetric embedded sphere $N_{0}$ obeying an admissibility condition admits a solution $\{N_{t}\}_{0 \leq t < T_{\max}}$ to IMCF which exists forever, remains embedded, and rapidly converges to spheres after scaling.

\begin{theorem} [Rotationally Symmetric Stability of Round Spheres in IMCF] \thlabel{long_time}
Let $F_{0}: \mathbb{S}^{n} \rightarrow \mathbb{R}^{n+1}$ be a $C^{\infty}$, $H>0$ rotationally symmetric embedding that is admissible in the sense of \thref{admissible}. Then $T_{\max}=+\infty$ for the corresponding maximal solution $F: \mathbb{S}^{n} \times [0,T_{\max}) \rightarrow \mathbb{R}^{n+1}$ of \eqref{IMCF}. Furthermore, $N_{t}=F_{t}(\mathbb{S}^{n})$ is embedded for all $t \in [0,\infty)$ and star-shaped for $t \geq n \text{log}(R^{-1} \text{diam} (N_{0}))$, where $R$ is the radius of the largest ball $N_{0}$ encloses. As a consequence, there is an $x_{0} \in \mathbb{R}^{n+1}$ and $r>0$ such that the re-scaled surfaces $\tilde{N}_{t}=e^{-\frac{t}{n}} N_{t}$ converge in $C^{\infty}$ topology to $\partial B_{r}(x_{0})$ (in the sense of smooth convergence of some choice of immersions) as $t \rightarrow \infty$.
\end{theorem}

\thref{admissible} includes a $C^{1}$ condition on a rotationally symmetric surface $N_{0}$ that controls the shape of its ``necks''. Specifically, it ensures that each neck of the surface has a certain minimal thickness and length. We give an example of a non-star-shaped $N_{0}$ satisfying \thref{admissible} in Appendix A2.

One application of this theorem is to geometric inequalities. The Minkowski inequality is a lower bound on the $L^{1}$ norm of the mean curvature $H$ of a closed, convex hypersurface $N_{0} \subset \mathbb{R}^{n+1}$ in terms of its area. Because of its connection with the isoperimetric problem among others, one would like to understand which non-convex bodies this inequality extends to. Guan and Li first observed in \cite{Guan2009TheQI} that the Minkowski inequality holds on a mean-convex embedded hypersurface $N_{0}$ whenever $N_{0}$ admits a long-time solution $N_{t}$ to IMCF that becomes asymptotically round. Therefore, \thref{long_time} leads to the following corollary for admissible rotationally symmetric hypersurfaces.

\begin{corollary}[Minkowski Inequality] \thlabel{minkowski}
Let $N_{0}=F_{0}(\mathbb{S}^{n}) \subset \mathbb{R}^{n+1}$ be an embedded, $H>0$ rotationally symmetric hypersurface which is admissible in the sense of \thref{admissible}. Then the Minkowski inequality

\begin{equation}
     \int_{N_{0}} H d\mu \geq n |\partial B_{1}(0)|^{\frac{1}{n}} |N_{0}|^{\frac{n-1}{n}}
\end{equation}
holds on $N_{0}$, with equality if and only if $N_{0}$ is round.
\end{corollary}

A second application of this theorem relates to minimal surface theory. Two longstanding and closely-related questions in this field concern $(1)$ the number of stable minimal disks that span a given Jordan curve $\gamma \subset \mathbb{R}^{3}$, and $(2)$ whether or not the least-area disk spanning $\gamma$ is embedded. In some cases, $\gamma$ may bound infinitely many stable minimal disks, and its least-area disk may self-intersect, see \cite{morgan} and \cite{Coskunuzer_2012}, respectively. Some other results on each of these problems can be found in, e.g., \cite{Almgren1979ExistenceOE}, \cite{Ekholm2002EmbeddednessOM}, \cite{rossman}, \cite{beeson}, \cite{hardt_simon}, and \cite{Beeson2006ARJ}.

These questions are more tractable when considering a Jordan curve $\gamma$ that lies on the mean-convex boundary $\partial E_{0}$ of a bounded domain $E_{0} \subset \mathbb{R}^{3}$. In their landmark paper \cite{Meeks1982TheEO}, W.H. Meeks and S.T. Yau showed for any such $\gamma$ that there is an embedded disk $D \subset E_{0}$ which spans $\gamma$ and minimizes area among all other such disks in $E_{0}$. They also showed that the number of stable minimal disks in $E_{0}$ that span $\gamma$ and satisfy a uniform area bound is finite. In general, neither of these results apply to all disks in $\mathbb{R}^{3}$ that span this $\gamma$. However, we will show that all immersed minimal surfaces bounded by $\gamma$ lie within $E_{0}$ whenever the surface $N_{0}=\partial E_{0}$ admits a global, embedded solution to \eqref{IMCF}. From this, we obtain the following theorem.

\begin{theorem}[Embeddedness and Finiteness of Area-Minimizers] \thlabel{embeddedness}
Let $N_{0}=F_{0}(\mathbb{S}^{2}) \subset \mathbb{R}^{3}$ be an $H>0$ embedded surface such that the maximal solution $N_{t}$ to \eqref{IMCF} exists and remains embedded for all time, and let $\gamma \subset N_{0}$ be a Jordan curve. Then the least-area disk $D$ spanning $\gamma$ in $\mathbb{R}^{3}$ is embedded. Furthermore, if $\gamma$ is a $C^{4,\alpha}$ Jordan curve, then for any $k \in \mathbb{R}$ it bounds only finitely many stable immersed minimal disks with areas less than $k$. 

In particular, these properties hold whenever $\gamma$ lies on an $H>0$ surface $N_{0}$ which is either star-shaped or is rotationally symmetric and admissible in the sense of \thref{admissible}.
\end{theorem}

The paper is organized as follows: in section 2, we discuss elementary properties of rotationally symmetric immersions. In section 3, we show that each immersion $F_{t}$ in \eqref{IMCF} is an embedding if $N^{n}=\mathbb{S}^{n}$ and $F_{0}$ is a rotationally symmetric embedding. The subsequent sections focus on establishing long-time existence for a rotationally symmetric solution. Our approach here is a localized or ``non-cylindrical" version of the parabolic maximum principle that we present in section 4. We also define domains in $\mathbb{S}^{n} \times [0,T)$ that correspond to different regions of the flow surfaces $N_{t}$ in this section.

After deriving evolution equations and $C^{0}$ estimates in sections 5 and 6, we apply the non-cylindrical maximum principle over each of these domains in sections 7 and 8. First, in section 7, we inspect the ``bridge'' region of the surface which lies away from the axis of rotation. We show that the flow speed $H^{-1}$ can be controlled over this region via a sharp gradient-like estimate which is obtained using the $C^{1}$ assumption of \thref{admissible}. In section 8, we obtain a bound on $H^{-1}$ over the ``cap" regions which intersect the axis of rotation. Combining these estimates allows us complete the proof of \thref{long_time} in section 9.

Section 10 is dedicated to the applications of \thref{long_time}. \thref{minkowski} more or less follows immediately. The proof of \thref{embeddedness} uses the fact proved in \cite{me} that an embedded solution of \eqref{IMCF} foliates its image in $\mathbb{R}^{n+1}$. A mean-convex foliation of the region exterior to $N_{0}$ controls the position of immersed minimal disks via a comparison principle.

The version of the parabolic maximum principle used in this paper is non-standard, as the underlying domain is non-cylindrical in a sense described in section 4. We justify this modification in the Appendix A.1. In Appendix A.2, we construct rotationally symmetric initial data $N_{0}$ that satisfies the hypothesises of \thref{long_time} and is not star-shaped.

\section*{Acknowledgements}
I would like to thank Professors Joel Hass and Adam Jacob of the University of California, Davis and Professor Mao-Pei Tsui for National Taiwan University for many helpful discussions. I would also like to thank the University of California, Davis Department of Mathematics and the National Center for Theoretical Sciences, Mathematics Division at National Taiwan University for their financial support throughout my graduate
 and postdoctoral studies.

\section{Properties of Rotationally Symmetric Immersions}
\hspace{0.5cm} Consider a $C^{\infty}$ orientation-preserving immersion $F_{0}: N^{n} \rightarrow \mathbb{R}^{n+1}$ with an image $N_{0}=F_{0}(N)$ that is fixed by the set of all rotations about some axis in $\mathbb{R}^{n+1}$. We choose a Cartesian coordinate system of the ambient space so that the axis of symmetry is the $x_{1}$-axis, which we will denote as a set by $X_{1} \subset \mathbb{R}^{n+1}$. Two important ambient vector fields for extracting information about $N_{0}$ are

\begin{eqnarray}
    \hat{w}(x) &=& \frac{(0, x_{2}, \dots, x_{n+1})}{(x_{2}^{2} + \dots + x_{n+1}^{2})^{\frac{1}{2}}}, \hspace{0.5cm} x \in \mathbb{R}^{n+1} \setminus X_{1} \label{w} \\
    \hat{e}_{1}(x) &=& (1,0,0, \dots, 0), \hspace{0.5cm} x \in \mathbb{R}^{n+1}. \label{e1}
\end{eqnarray}
The distance to $X_{1}$ or ``height" $u$ and the $x_{1}$-coordinate $\tilde{u}$ of the image point of $x \in N$ are
\begin{eqnarray}
    u(x) &=& \langle w, \vec{F_{0}} (x) \rangle, \\
    \tilde{u}(x) &=& \langle e_{1}, \vec{F_{0}}(x) \rangle. \label{x_1}
\end{eqnarray}
The Killing vector fields corresponding to rotation about $X_{1}$ are all orthogonal to both $\hat{w}$ and $\hat{e}_{1}$ on $\mathbb{R}^{n+1} \setminus X_{1}$ and are necessarily tangent to $N_{0}$, so for $F_{0}(x) \not \in X_{1}$ any choice of unit normal $\nu$ on $N_{0}$ must lie in $\text{span}\{ e_{1}, w \} \subset T_{F_{0}(x)}\mathbb{R}^{n+1}$. From this, one can easily see that the unit vector

\begin{equation} \label{v_1}
    \hat{v}_{1} = \langle \nu, w \rangle \hat{e}_{1} - \langle \nu, e_{1} \rangle \hat{w}
\end{equation}
in $\text{span}\{ e_{1}, w \}$ is tangent to $N_{0}$ at $F_{0}(x)$. If we include $\hat{v}_{1}$ in an orthonormal basis $\hat{v}_{1}, \dots, \hat{v}_{n}$ of $T_{F_{0}(x)}N_{0}$, the vectors $\hat{v}_{2}, \dots, \hat{v}_{n}$ each correspond to directions of rotation about $X_{1}$. In particular, 
\begin{equation*}
    \hat{v}_{i}(\langle \nu, w \rangle)=\hat{v}_{i}(\langle \nu, e_{1} \rangle) = \hat{v}_{i}((x_{2}^{2} + \dots x_{n}^{2}))= 0
\end{equation*}
 for $i > 1$, and the Euclidean covariant derivatives of $\hat{v}_{1}$ with respect to these $\hat{v}_{i}$ are

\begin{eqnarray}
    \nabla_{\hat{v}_{i}} \hat{v}_{1} &=& -\langle \nu, e_{1} \rangle \nabla_{\hat{v}_{i}} \hat{w} = - \langle \nu, e_{1} \rangle \frac{\hat{v}_{i}(x_{j})}{(x_{2}^2 + \dots x_{n}^{2})^{\frac{1}{2}}} \partial_{x_{j}} \label{v_i} \\
    &=& - \langle \nu, e_{1} \rangle \frac{v^{j}_{i}}{(x_{2}^2 + \dots x_{n}^{2})^{\frac{1}{2}}} \partial_{x_{j}} = - \frac{\langle \nu, e_{1} \rangle}{u} \hat{v}_{i}. \hspace{0.5cm} (i,j > 1) \nonumber
\end{eqnarray}

Thus $\hat{v}_{1}$ is an eigenvector of the second fundamental form $A(X,Y)=-\langle \nabla_{X} Y, \nu \rangle$ of $N_{0}$ at $F_{0}(x)$-- we call its principal curvature $k(x)$. Every other $\hat{v}_{i}$ in this basis is tangent to a circle of radius $u(x)$ centered about $X_{1}$ and lieing on $N_{0}$. So the acceleration vector of $\hat{v}_{i}$ in $\mathbb{R}^{n+1}$ for $i >1$ is

\begin{equation} \label{other_v_i}
    \nabla_{\hat{v}_{i}} \hat{v}_{i} = -u^{-1} \hat{w} \hspace{0.5cm}, 
\end{equation}
and the corresponding curvature $p=A(\hat{v}_{i},\hat{v}_{i})$ is
\begin{equation} \label{rot_curv}
    p(x)= \langle w, \nu(x) \rangle u(x)^{-1}.
\end{equation}
In fact, this argument works for any unit vector $\hat{v} \in \text{span}\{ \hat{v}_{2}, \dots, \hat{v}_{n}\}$, meaning $A(\hat{v},\hat{v})=p$. This implies that $A(X,Y)=p \langle X, Y \rangle$ over $\text{span}\{ \hat{v}_{2}, \dots, \hat{v}_{n}\}$, and so the $n-1$ other principal curvatures of $N_{0}$ each equal $p$. It is convenient that these principal curvatures are encoded in \eqref{rot_curv} because it is a function on $N \setminus \{ u = 0 \}$ that depends only on $F_{0}$ and its first derivatives.  Altogether, for $x \in N$ with $u(x) > 0$ the mean curvature pulls back as

\begin{equation}
    H(x) = (n-1) p(x) + k(x).
\end{equation}

When $u(x)=0$, \eqref{rot_curv} is not well-defined, but we can continuously $p$ to this set given that $H$ is defined everywhere.
\begin{proposition} \thlabel{extension}
Let $F: N^{n} \rightarrow \mathbb{R}^{n+1}$ be a $C^{\infty}$ immersion of an oriented closed manifold $N$, and suppose $N_{0}=F_{0}(N)$ is symmetric about the axis $X_{1}$. Then the quantity $p$ defined in \eqref{rot_curv} continuously extends to the set $\{ x \in N | u(x) = 0 \}$ as $p(x)= \frac{1}{n} H(x)$.
\end{proposition}
\begin{proof}
Let $r: \mathbb{R}^{n+1} \rightarrow \mathbb{R}^{n+1}$ be a rotation about $X_{1}$. Then as a Euclidean isometry, $r$ preserves the second fundamental form of $N_{0}$, i.e.

\begin{equation} \label{pullback}
    A_{r(F_{0}(x))}(r_{*}(X),r_{*}(Y))= A_{F_{0}(x)}(X,Y)
\end{equation}
for all $x \in N$ and $X,Y \in T_{F_{0}(x)} N_{0}$. If $F_{0}(x) \in X_{1}$, let $\hat{v}_{1}, \dots, \hat{v}_{n}$ be an orthonormal basis of $T_{F_{0}(x)} N_{0}$ that diagonalizes $A$. Notice that $r(F_{0}(x))=F_{0}(x)$ for any $X_{1}$ rotation. For each $i=2, \dots, n$, we consider a rotation $r^{(i)}$ with push-forward $r^{(i)}_{*}: T_{F_{0}(x)} N_{0} \rightarrow T_{F_{0}(x)} N_{0}$ that sends $\hat{v}_{1}$ to $\hat{v}_{i}$ (such $r^{(i)}$ must exist because the group of $X_{1}$ rotations generates an $SO(n-1)$ action on $T_{F_{0}(x)} N_{0}$, and so the orbit of $\hat{v}_{1}$ is the set of all unit vectors). \eqref{pullback} then implies

\begin{equation}
    A_{F_{0}(x)}(\hat{v}_{1},\hat{v}_{1})= A_{F_{0}(x)}(\hat{v}_{i}, \hat{v}_{i}), \hspace{0.5cm} i=2, \dots, n.
\end{equation}
That is, the $n$ principal curvatures are all equal and so $\mathring{A}(x)=0$. For any sequence of points $x_{j} \in N \setminus \{ u =0 \}$ converging to this $x$ since the norm of the umbilicity tensor at $x_{j}$ approaches $0$ as $j \rightarrow \infty$, we have that $p(x_{j}) - k(x_{j}) \rightarrow 0$ and therefore $H(x_{j}) - np(x_{j}) \rightarrow 0$. The result follows.  
\end{proof}

When $p >0$ on $N_{0}$, $\langle w, \nu \rangle$ is bounded below by a multiple of the height function $u$. This allows one to realize $u$ as a function of the $x_{1}$-coordinate $\tilde{u}$, which in turn implies that $N_{0}$ is the embedded image of $\mathbb{S}^{n}$. We will not assume \textit{a priori} in our setting that $p$ is positive on $N_{0}$. However, when $N^{n}=\mathbb{S}^{n}$ and $H>0$, the \textit{converse} can also be demonstrated: if $F_{0}$ is a mean-convex embedding, then $p$ must be positive. This characterization plays an important role in understanding the long-time behavior of rotationally symmetric embedded spheres evolved by IMCF.

\begin{theorem} \thlabel{graph}
Let $F_{0}: \mathbb{S}^{n} \rightarrow \mathbb{R}^{n+1}$ be a $C^{\infty}$ rotationally symmetric immersion, and let $p$ be the quantity defined in \eqref{rot_curv} continuously extended to the set $\{ x \in \mathbb{S}^{n} | u(x)= 0 \}$ by $p(x)= \frac{1}{n}H(x)$. If $\min_{\mathbb{S}^{n}} p > 0$, then $F_{0}$ is an embedding. Conversely, if $F_{0}$ is an $H>0$ embedding, then $\min_{\mathbb{S}^{n}} p >0$.
\end{theorem}
\begin{proof}
We begin with the forward direction: if $\min_{\mathbb{S}^{n}} p > 0$, then when $u(x) \neq 0$

\begin{equation} \label{lower_bound}
    \langle w, \nu(x) \rangle \geq (\min_{\mathbb{S}^{n}} p) u(x) >0.
\end{equation}
Since $F_{0}$ is an immersion, for each $x \in \mathbb{S}^{n} \setminus \{ u =0 \}$ there is a unique vector $\tilde{v}_{1} \in T_{x} \mathbb{S}^{n}$ with
\begin{equation*}
    (F_{0})_{*}(\tilde{v}_{1})= \hat{v}_{1} \in T_{F_{0}(x)}N_{0},
\end{equation*}
for $\hat{v}_{1}$ defined in \eqref{v_1}. We compute $\frac{d}{ds} \tilde{u}(s)$ for the function $\tilde{u}$ from \eqref{x_1} along an integral curve $\gamma$ of the smooth vector field $\tilde{v}_{1}$ over $\mathbb{S}^{n} \setminus \{ u=0 \}$. The Euclidean gradient of the coordinate function $x_{1}$ equals $e_{1}$, so in view of \eqref{lower_bound},

\begin{eqnarray}
    \frac{d}{ds} \tilde{u}(s) &=& \langle e_{1}, \frac{d}{ds} (F_{0} \circ \gamma)(s) \rangle = \langle e_{1}, \hat{v}_{1} (F_{0} \circ \gamma(s)) \rangle = \langle \nu, w \rangle (s) > 0. \nonumber \label{x_1_grad}
\end{eqnarray}
Therefore, $\tilde{u}(s_{2}) > \tilde{u}(s_{1})$ whenever $s_{2} > s_{1}$. If we maximally extend $\gamma$ over an interval $(0,s_{0})$, $|\frac{d}{ds} \gamma|=|\tilde{v}_{1}|=1$ guarantees that $\gamma(s)$ converges to points $\gamma(0)$ and $\gamma(s_{0})$ in $\{ u=0 \}$ as $s$ approaches $0$ and $s_{0}$, respectively. Note that $\tilde{u}(\gamma(0)) < \tilde{u}(\gamma(s_{0}))$ as well.  

Now, suppose  $Y \in T \mathbb{R}^{n+1}$ is the Killing field corresponding to a rotation $r: \mathbb{R}^{n+1} \rightarrow \mathbb{R}^{n+1}$ that fixes $X_{1}$. For each $x \in \mathbb{S}^{n}$, let $\tilde{Y} \in T_{x} \mathbb{S}^{n}$ be the unique vector satisfying $(F_{0})_{*}(\tilde{Y})=Y \in T_{F_{0}(x)}N_{0}$. The flow of the vector field $\tilde{Y}$ yields a diffeomorphism $\tilde{r}: \mathbb{S}^{n} \rightarrow \mathbb{S}^{n}$ that satisfies $F_{0} \circ \tilde{r}= r \circ F_{0}$. 

We claim that the orbit of the above curve $\gamma: [0,s_{0}] \rightarrow \mathbb{S}^{n}$ under the set of all diffeomorphisms of this form is all of $\mathbb{S}^{n}$. Calling this orbit $R \subset \mathbb{S}^{n}$, let $f: R \rightarrow \partial B_{1}(0) \subset \mathbb{R}^{n+1}$ be a map which sends $\gamma(0)$ and $\gamma(s_{0})$ to antipodal points. Take a geodesic polar coordinate system $(\tilde{s}, \phi)$, $\phi \in \mathbb{S}^{n-1}$, with respect to $f(\gamma(0))$ of $\partial B_{1}(0) \setminus \{f(\gamma(0)), f(\gamma(s_{0}))\}$.  We can also index $\tilde{r}$ by $\tilde{r}=\tilde{r}_{\phi}$ for $\phi \in \mathbb{S}^{n-1}$, and this allows us to extend $f$ to the rest of $R$ as

\begin{equation}
    f(\tilde{r}_{\phi}(\gamma(s))) = ( \frac{\pi}{s_{0}} s, \phi ).
\end{equation}
Noting that $\tilde{r}(\gamma(0))=\gamma(0), \tilde{r}(\gamma(s_{0}))=\gamma(s_{0})$ for any $\tilde{r}$, $f$ is seen to be a continuous bijection, and so $R = \mathbb{S}^{n}$.
 
We are now ready to show $F_{0}$ is an embedding: $\{ u =0 \} = \{ \gamma(0),\gamma(s_{0})\}$ since $R=\mathbb{S}^{n}$, and $F_{0}(\gamma(0)) \neq F_{0}(\gamma(s_{0}))$ because  $\tilde{u}(\gamma(0)) \neq \tilde{u}(\gamma(s_{0}))$. For different points $x, y \in \mathbb{S}^{n} \setminus \{ u =0\}$, first suppose $x \in \text{Im}(\tilde{r}_{1} \circ \gamma)$, $y \in \text{Im}(\tilde{r}_{2} \circ \gamma)$ for $\tilde{r}_{1} \neq \tilde{r}_{2}$. One can easily verify that the curves $\tilde{r}_{1} \circ \gamma$ and $\tilde{r}_{2} \circ \gamma$ have disjoint images under $F_{0}$ for $0 < s < s_{0}$. Now suppose $x,y \in \text{Im}(\tilde{r} \circ \gamma)$ for the same $\tilde{r}$. It is also straightforward that $\tilde{r} \circ \gamma$ is an integral curve of $\tilde{v}_{1}$ given that $\gamma$ is. So in view of \eqref{x_1_grad}, $\tilde{u}(x) \neq \tilde{u}(y)$. Therefore, $F_{0}$ is injective and hence is an embedding.

For the other direction, first assume $F_{0}$ is an embedding, and let $E_{0}$ be the bounded open domain in $\mathbb{R}^{n+1}$ with $\partial E_{0} =N_{0}$.
The topological ball $E_{0}$ must intersect $X_{1}$ along an interval $(c^{-},c^{+})$. Consider $c \in (c^{-},c^{+})$, and take a point $x \in \tilde{u}^{-1}(c)$ with

\begin{equation}
    u(x)= \min_{y \in \tilde{u}^{-1}(c)} u(y).
\end{equation}
Then the $n$-dimensional disk $D$ in the hyper-plane $\{ x_{1} = c\} \subset \mathbb{R}^{n+1}$ given by

\begin{equation}
    D= \{ (c,x_{2}, \dots, x_{n+1}) | (x_{2}^{2} + \dots + x_{n+1}^{2})^{\frac{1}{2}} < u(x) \}
\end{equation}
must be contained in $E_{0}$, with $\partial D \subset N_{0}$. The outward normal of $\partial D$ as a surface in $\{ x_{1} = c \}$ equals $w$, so any vector $X \in T_{F_{0}(x)} \mathbb{R}^{n+1}$ satisfying $\langle X, w \rangle < 0$ points into $E_{0}$. Therefore, $\langle \nu, w \rangle \geq 0$ at $F_{0}(x)$.

In fact, if $H >0$ then $\langle \nu, w \rangle > 0$ at $F_{0}(x)$: suppose $\langle \nu, w \rangle(x)=0$. Then $p(x)=0$ and

\begin{eqnarray*}
    \nu(x) &=& \pm \hat{e}_{1}, \\
    \hat{v}_{1}(x) &=& \mp \hat{w},
\end{eqnarray*}
 meaning that either $\hat{v}_{1}(x)$ or $-\hat{v}_{1}(x)$ points into $D \subset E_{0}$. As a consequence, $k(x)=A(\hat{v}_{1},\hat{v}_{1}) \leq 0$, which contradicts $H>0$. So $\langle w, \nu \rangle > 0$ on $N_{0} \cap \partial D$.

To conclude the argument, we must show that there is no $y \in \tilde{u}^{-1}(\{ c \})$ with $u(y) > u(x)$, or equivalently that $N_{0} \cap \{ x_{1}=c \} = \partial D$. Call $F_{0}(x)=\gamma(s_{1})$ for some integral curve $\gamma$ of $\hat{v}_{1}$, and first suppose that there is an $s_{2} > s_{1}$ with $\tilde{u}(\gamma (s_{2})) = \tilde{u}(\gamma (s_{1}))=c$. $\langle w, \nu \rangle (s_{1}) >0$ implies $\frac{d}{ds} \tilde{u}(s_{1}) > 0$, and so there is an $s' \in (s_{1},s_{2})$ that maximizes the function $\tilde{u}$ over $(s_{1},s_{2})$. Using equation \eqref{w_grad} for $\nabla \langle w, \nu \rangle$,

\begin{eqnarray*}
    \frac{d}{ds} \tilde{u} (s') &=& \langle w, \nu \rangle (s') = 0, \\
    \frac{d^{2}}{ds^{2}} \tilde{u}(s') &=& \frac{d}{ds} \langle w, \nu \rangle (s') = - \langle e_{1}, \nu \rangle k (s') \leq 0.
\end{eqnarray*}
 The outward normal $\nu(s')$ must point toward $\{ x_{1}=c \}$; that is, $\nu(s')=-e_{1}$ and $k(s') \leq 0$. Since $p(s')=0$, this once again contradicts $H>0$. If $\tilde{u}(\gamma(s_{2})) = c$ for $s_{2} < s_{1}$, the same argument applies if we take $s' \in (s_{2},s_{1})$ to be a minimum of $\tilde{u}$. 

Altogether, $\langle w, \nu \rangle(x) >0$ whenever $\tilde{u}(x) \in (c^{-},c^{+})$. $\tilde{u}$ must then be monotone increasing along integral curves of $\hat{v}_{1}$, which in turn implies $\tilde{u}(x) \in (c^{-},c^{+})$ for every $x \in \mathbb{S}^{n} \setminus \{ u= 0 \}$. $H>0$ also guarantees the continuous extension of $p$ to $\{ u = 0 \}$ is positive.
\end{proof}

\begin{remark}
A rotationally symmetric immersion $F_{0}: \mathbb{S}^{n} \rightarrow \mathbb{R}^{n+1}$ with $\min_{\mathbb{S}^{n}} p \leq 0$ either fails to be an embedding or its mean curvature vanishes somewhere. Figure \ref{p>0} illustrates how the disk argument breaks down in each of these contexts.
\end{remark}

\begin{remark} \thlabel{degree_k}
The reverse direction also holds if $H>0$ is replaced with $f>0$ for any degree-$1$ homogeneous function $f(\lambda_{1},\lambda_{2}, \dots, \lambda_{n})$ of the principal curvatures that is non-negative when each $\lambda_{i}$ is non-negative.
\end{remark}
\begin{figure}
\begin{center}
    \textbf{Immersed Spheres with Rotational Symmetry}
    \vspace{0.3cm}
\end{center}
    \centering
    \begin{tikzpicture}[xscale=1.45,yscale=1.45,xshift=0.4cm]
    \begin{axis}[  xlabel=\footnotesize{$x_{1}$}, y=0.5cm, x=0.5cm,
          xmax=8.5, xmin=-8.5, ymax=3, ymin=-3,
          axis lines=middle,
          restrict y to domain=-7:20,
           xtick = {0},
        ytick= {0},
        yticklabels= {0, $u_{\min}(0)$},
        y axis line style={draw= none},
          enlargelimits]

        \addplot[ domain=-9:-6, smooth, thick, name path=-f] {-(4 - (x+6)*(x+6))^(0.5)};
\addplot[domain=-6:6, smooth, thick, name path=-g] {0.5*cos(1.05*deg(x+6)) + 1.5};
\addplot[ domain=6:9, smooth, thick, name path=-h] {-(4 - (x-6)*(x-6))^(0.5)}; 
\draw[thick, color=blue] (2,1.25) -- (2,-1.25);
\draw[color=blue] (2.5,0.5) node{$D$};
\draw (0,0.5) node{\large{$E_{0}$}};
\draw[->] (2,1.25) -- (2.3,1.85); 
\draw (2.3,1.85) node[anchor=west]{$\nu$};

        \addplot[ domain=-9:-6, smooth, thick, name path=f] {(4 - (x+6)*(x+6))^(0.5)};
\addplot[domain=-6:6, smooth, thick, name path=g] {-0.5*cos(1.05*deg(x+6)) -1.5};
\addplot[ domain=6:9, smooth, thick, name path=h] {(4 - (x-6)*(x-6))^(0.5)};
\addplot[gray, opacity=0.3] fill between[of =f and -f];
\addplot[gray, opacity=0.3] fill between[of =g and -g];
\addplot[gray, opacity=0.3] fill between[of =h and -h];
\draw (2,2.5) node{\small{$\langle w, \nu \rangle >0$}}; 
\draw (-7,3) node{$H>0$, embedded};
        \end{axis}
    \end{tikzpicture}

  \begin{tikzpicture}[xscale=1.7,yscale=1.25,xshift=0.4cm]
   \begin{axis}[  xlabel=\footnotesize{$x_{1}$}, y=0.5cm, x=0.5cm,
        xmax=10, xmin=-4, ymax=4, ymin=-4,
        axis lines=middle,
        restrict y to domain=-7:20,
          xtick = {0},
       ytick= {0},
       yticklabels= {0, $u_{\min}(0)$},
       y axis line style={draw= none},
         enlargelimits]
    \addplot[domain=0:1.57, name path=g]({cos(deg(2*x)) + 5.5}, {1.15*x});
     \addplot[domain=1.57:3.14, name path=h]({cos(deg(2*x)) + 5.5}, {1.15*x});
    \addplot[domain=-1.57:0, name path=-g]({cos(deg(2*x)) + 5.5}, {1.15*x});
        \addplot[domain=-3.14:-1.57, name path=-h]({cos(deg(2*x)) + 5.5}, {1.15*x});
    \addplot[domain=0:3.14, name path=m] ({1.5*cos(deg(x)) +5}, {sin(deg(x)) + 3.611});
    \addplot[domain=0:3.14, name path=k] ({0.55*sin(deg(x-1.57)) + 4.05}, {3.611 - 0.3*x});
   \addplot[domain=3.14:3.3, name path=l] ({0.55*sin(deg(x-1.57)) + 4.05}, {3.611 - 0.3*x});
     \addplot[domain=3.3:4.71, name path=j] ({0.55*sin(deg(x-1.57)) + 4.05}, {3.611 - 0.3*x});
    \addplot[domain=0: 4.05, name path=f] ({x}, {1.1*x^(0.5)});
    \addplot[domain=0: 4.05, name path=-f] {-(1.1*x^(0.5))};
     \addplot[domain=0:3.14, name path=-m] ({1.5*cos(deg(x)) +5}, {-sin(deg(x)) - 3.611});
    \addplot[domain=0:3.14, name path=-k] ({0.55*sin(deg(x-1.57)) + 4.05}, {-3.611 + 0.3*x});
      \addplot[domain=3.14:3.3, name path=-l] ({0.55*sin(deg(x-1.57)) + 4.05}, {-3.611 + 0.3*x});
        \addplot[domain=3.3:4.71, name path=-j] ({0.55*sin(deg(x-1.57)) + 4.05}, {-3.611 + 0.3*x});
    \addplot[gray, opacity=0.3] fill between[of =f and -f];
    \addplot[gray, opacity=0.3] fill between[of =g and -g];
    \addplot[gray, opacity=0.3] fill between[of =j and -j];
     \addplot[gray, opacity=0.3] fill between[of =h and l];
    \addplot[gray, opacity=0.3] fill between[of =k and m];
    \addplot[gray, opacity=0.3] fill between[of =-h and -l];
    \addplot[gray, opacity=0.3] fill between[of =-k and -m];
    \draw (-3,4) node{$H \ngtr 0$, embedded};
    \draw (3,0.75) node{\large{$E_{0}$}};
    \draw[color=blue] (4.5,0.5) node[anchor=west]{$D$};
    \draw[thick, color=blue] (4.5,1.9) -- (4.5,-1.9);
    \draw[->] (4.5,1.9) -- (5.25,1.9);
    \draw (5.5,1.9) node[anchor=south]{$\nu$};
    \draw (5.5,1.5) node[anchor=west]{$\langle w,\nu \rangle=0$, $k=0$};
    \end{axis}
   \end{tikzpicture}

    \begin{tikzpicture}[xscale=1.45,yscale=1.45,xshift=0.4cm]
    \begin{axis}[  xlabel=\footnotesize{$x_{1}$}, y=0.5cm, x=0.5cm,
          xmax=8.5, xmin=-8.5, ymax=3, ymin=-3,
          axis lines=middle,
          restrict y to domain=-7:20,
           xtick = {0},
        ytick= {0},
        yticklabels= {0, $u_{\min}(0)$},
        y axis line style={draw= none},
          enlargelimits]
          \addplot [domain=-1.5:1.5]({0.5*x^3-0.5*x},{0.5*x^2+1.5});
                  \addplot[ domain=-7.242:-0.9375, smooth, name path=-f] {(9 - 0.5*(x+3)*(x+3))^(0.5)};
                  \addplot[ domain=0.9375:7.242, smooth, name path=f] {(9 - 0.5*(x-3)*(x-3))^(0.5)};
                  \addplot [domain=-1.5:1.5]({0.5*x^3-0.5*x},{-0.5*x^2-1.5});
                  \addplot[ domain=-7.242:-0.9375, smooth, name path=-f] {-(9 - 0.5*(x+3)*(x+3))^(0.5)};
                  \addplot[ domain=0.9375:7.242, smooth, name path=f] {-(9 - 0.5*(x-3)*(x-3))^(0.5)};
                  \draw[thick, color=blue] (0,-1.5) -- (0,1.5);
                  \draw (-7,3.25) node{\scriptsize{$H>0$, not embedded}};
                  \draw[->] (0,1.5) -- (0,0.75);
                  \draw (0,1) node[anchor=east]{\scriptsize{$\nu$}};
                  \draw (0,1) node[anchor=west]{\scriptsize{$\langle w, \nu \rangle < 0$, $k>0$}};
                  \draw[color=blue] (0,-0.5) node[anchor=west]{\scriptsize{$D$}};
    \end{axis}
\end{tikzpicture}
    \caption{The cross sections of rotationally symmetric immersions. If the surface is mean-convex and embedded then $p >0$, but both of these assumptions are neccessary.}
    \label{p>0}
\end{figure}

\section{Short-Time Existence and Embeddedness}
We now consider the evolution problem \eqref{IMCF} for a smooth, strictly mean-convex embedding $F_{0}: \mathbb{S}^{n} \rightarrow \mathbb{R}^{n+1}$ with rotationally symmetric image. In \cite{Gerhard1999}, Huisken and Polden establish the existence of a short-time solution to \eqref{IMCF} for smooth initial data on a closed manifold. 

\begin{theorem}[Short-Time Existence, Theorem 7.17 in \cite{Gerhard1999}] \thlabel{short_time}
Let $F_{0}: N^{n} \rightarrow \mathbb{R}^{n+1}$ be a $C^{\infty}$ immersion of a smooth, oriented closed manifold $N$ with positive outward mean curvature $H$. Then there exists a unique one-parameter family of $C^{\infty}$ immersions $F: N^{n} \times [0,T) \rightarrow \mathbb{R}^{n+1}$ defined over a time interval $0 \leq t < T$ that obeys \eqref{IMCF} and satisfies $F(x,0)=F_{0}(x)$.
\end{theorem}

One consequence of the uniqueness of solutions is that if an isometry $f$ of the ambient space fixes $N_{0}=F_{0}(N)$, i.e. if $f(N_{0})=N_{0}$, then $f(N_{t})=N_{t}$ for each $t \in [0,T_{\max})$. Therefore, each $N_{t}$ is rotationally symmetric about $X_{1}$ given that $N_{0}$ is, so we can consider the evolution of the quantities introduced in the previous section under IMCF.

A key way in which inverse mean curvature flow differs from mean curvature flow is that it does not in general preserve embeddedness-- that is, $F_{t}$ may not be an embedding even if $F_{0}$ is. One example of this phenomenon is when $N_{0}$ is chosen to be the boundary of two disjoint unit balls in Euclidean space. The expanding round spheres will exist for a long time, but eventually intersect one another. \cite{me} also provides an example of a self-intersection when $F_{0}$ is an embedding of $\mathbb{S}^{n}$. In our context, however, \thref{graph} allows us to rule out this possibility.

\begin{theorem}[Preserving Embeddedness] \thlabel{embed}
Let $F_{0}: \mathbb{S}^{n} \rightarrow \mathbb{R}^{n+1}$ be an $H>0$, $C^{\infty}$ rotationally symmetric embedding, and $F: \mathbb{S}^{n} \times [0,T_{\max}) \rightarrow \mathbb{R}^{n+1}$ the corresponding maximal solution to \eqref{IMCF}. Then for $t \in [0,T_{\max})$, $F_{t}$ is an embedding. 
\end{theorem}

\begin{proof}
Utilizing the first part of \thref{graph}, we will show that $\min_{x \in \mathbb{S}^{n}} p(x,t) >0$ for each $t \in [0,T_{\max})$. Since $F_{0}$ is an $H>0$ embedding, $\min_{x \in \mathbb{S}^{n}} p(x,0) > 0$ by the second part of \thref{graph} and so $\min_{x \in \mathbb{S}^{n}} p(x,t) >0$ for $t \in [0,\epsilon)$ by continuity. Let 
\begin{equation*}
    t_{0}= \sup \{ t \in [0,T_{\max}) | \min_{x \in \mathbb{S}^{n}} p(x,t) > 0 \},
\end{equation*}
and suppose $t_{0} < T_{\max}$. Since $F_{t_{0}}$ is an immersion with positive mean curvature, $p (x,t_{0}) > 0$ whenever $u(x,t_{0})=0$ by \thref{extension}. So there is some $x_{0} \in \mathbb{S}^{n}$ with $u(x_{0},t_{0}) > 0$ and $p(x_{0},t_{0})=0$. 

$\min_{x \in \mathbb{S}^{n}} p(x,t_{0})=0$ and so $\nabla p (x_{0},t_{0})=0$. Let us compute $\nabla p(x,t)$ for an arbitrary $(x,t) \in (\mathbb{S}^{n} \times [0,T)) \setminus \{ u =0 \}$ (we will need this formula later anyway). Take an orthonormal basis $\hat{v}_{1}, \dots, \hat{v}_{n}$ at $(x,t)$ with $\hat{v}_{1}$ from \eqref{v_1}. By the identity

\begin{equation} \label{sum}
    |\langle w, \nu \rangle|^{2} = 1 - |\langle e_{1}, \nu \rangle|^{2} 
\end{equation}
we find in this basis

\begin{eqnarray} \label{w_grad}
    \nabla \langle w, \nu \rangle &=& \nabla_{i} \langle w, \nu \rangle \hat{v}_{i}= - \frac{\langle e_{1}, \nu \rangle}{\langle w, \nu \rangle} \nabla_{i} \langle e_{1}, \nu \rangle \hat{v}_{i} \\
    &=&  -\frac{\langle e_{1}, \nu \rangle}{\langle w, \nu \rangle} \langle e_{1}, \nabla_{i} \nu \rangle \hat{v}_{i} 
    = - \langle e_{1}, \nu \rangle k \hat{v}_{1}. \nonumber
\end{eqnarray}
 On the other hand, the Euclidean gradient of the function $(x_{2}^{2} + \dots + x_{n+1}^{2})^{\frac{1}{2}}$ over $\mathbb{R}^{n+1} \setminus X_{1}$ equals $\hat{w}$, and so projecting this onto the tangent space of $N_{t}$ yields

\begin{eqnarray} \label{height_grad}
    \nabla u &=& w - \langle w, \nu \rangle \nu \nonumber \\
             &=&(1 - |\langle w, \nu \rangle|^{2}) w - \langle w, \nu \rangle \langle e_{1}, \nu \rangle e_{1} \\
             &=& \langle e_{1}, \nu \rangle ( \langle e_{1}, \nu \rangle w - \langle w, \nu \rangle e_{1})= -\langle e_{1}, \nu \rangle \hat{v}_{1}, \nonumber
\end{eqnarray}
where we once again used \eqref{sum}. Putting these together, we get

\begin{equation} \label{p_grad}
   \nabla p = \langle e_{1}, \nu \rangle u^{-1} (p-k) \hat{v}_{1}.
\end{equation}
Therefore, $\nabla p (x_{0},t_{0})=0$ implies that $p(x_{0},t_{0})=k(x_{0},t_{0})=H(x_{0},t_{0})=0$, which is a contradiction. Conclude that $\min_{x \in \mathbb{S}^{n}} p(x,t) >0$ and $F_{t}$ is an embedding as long as the solution exists by the forward statement in \thref{graph}.
\end{proof}

\begin{remark}
Recalling \thref{degree_k}, this argument applies more generally to rotationally symmetric flows of the form $\frac{\partial F}{\partial t}(x,t) = f^{-1}(\lambda_{1}, \dots, \lambda_{n}) \nu(x,t)$, $f>0$, for a degree-$1$ homogeneous function $f$ that is non-negative when each $\lambda_{i} \geq 0$. In particular, the expanding flows considered in \cite{Urbas} and \cite{Guan2009TheQI} stay embedded in the rotationally symmetric context.
\end{remark}

\begin{remark}
Given that $\min_{x \in \mathbb{S}^{n}} p(x,t) > 0$, each $N_{t}$ can also be identified with a graph in the upper half-plane which generates it by revolution, and the questions of long-time existence and convergence may be approached by studying the evolution of these graphs. This approach involves a different gauge choice from \eqref{IMCF}, though, and we found the standard gauge to be more natural in this case. 
\end{remark}

It is shown in \cite{me} that an embedded solution of \eqref{IMCF} becomes star-shaped by the time 
\begin{equation}
t_{*}=n\text{log}(R^{-1}\text{diam} (N_{0})), 
\end{equation}
where $R$ is the radius of the largest ball enclosed by $N_{0}$ and $\text{diam}(N_{0})$ its extrinsic diameter. Therefore, whenever the solution to \eqref{IMCF} for a smooth rotationally symmetric embedding $F_{0}$ of $\mathbb{S}^{n}$ exists for a time $T_{\max} > t_{*}$, we have that $T_{\max}=+\infty$ and $N_{t}$ is star-shaped for $t>t_{*}$. In turn, \cite{claus}, \cite{huisken2}, and \cite{Urbas} provide stronger estimates and guarantee the asymptotic roundness of $N_{t}$. With the asymptotic behavior of a long-time embedded solution of IMCF already understood, the rest of this paper focuses on proving long-time existence.

\section{Non-Cylindrical Spacetime Domains}
The key to the regularity theory for IMCF is a lower bound on the mean curvature $H$: estimating $H$ from below (or, equivalently, the flow speed from above) uniformly over any given finite time interval will guarantee long-time existence, see Theorem 2.2 in \cite{huisken2}. Maximum principles are an obvious approach here, but a difficult issue is that many quantities one would like to exploit, e.g. the height function or the $\hat{w}$-component of the normal vector, are either $0$ or are undefined on $N_{0} \cap X_{1}$. This suggests that one should consider regions of $N_{t}$ close to $X_{1}$ and away from $X_{1}$ separately. Indeed, the literature for flows of these types of surfaces contains more than one approach to this issue, see \cite{angenent}, \cite{maria2}, \cite{maria3}, \cite{head2019singularity}, and \cite{lumer}. 

Our approach is inspired by \cite{maria2}, and it involves defining separate regions of $N_{t}$ and pulling them back via the embedding $F_{t}$. We can distinguish between points close to and away from $X_{1}$ in the following way: from the proof of \thref{graph}, we know $N_{0}$ intersects $X_{1}$ at two points, with $\nu=+\hat{e}_{1}$ at one of these points and $-\hat{e}_{1}$ at the other. We consider the subsets of $N_{0}$ where the $\hat{e}_{1}$-component of $\nu$ is non-negative and non-positive, respectively, and take the connected components of each that contain either of these points.
\begin{definition} \thlabel{subset}
Given a $C^{\infty}$, $H>0$ rotationally symmetric embedding $F_{0}: \mathbb{S}^{n} \rightarrow \mathbb{R}^{n+1}$, the \textbf{right cap} $C_{0}^{+}$ of $F_{0}$ is the interior of the connected component of the set $\{ x \in \mathbb{S}^{n}| \langle \nu(x), e_{1} \rangle \geq 0 \}$ that intersects $u^{-1}(\{0\})$. The \textbf{left cap} $C_{0}^{-}$ of $F_{0}$ is the interior of the connected component of  $\{ x \in \mathbb{S}^{n} | \langle \nu(x), e_{1} \rangle \leq 0 \}$ that intersects $u^{-1}(\{ 0 \})$. The \textbf{bridge} $B_{0}$ of $F_{0}$ is the interior of $\mathbb{S}^{n} \setminus (C^{+}_{0} \cup C^{-}_{0})$.
\end{definition}
\vspace{2cm}

\begin{figure}[h]
\begin{center}
    \textbf{Different Regions of $N_{0}$}
    \vspace{0.3cm}
\end{center}
    \centering
    \begin{tikzpicture}[xscale=1.45,yscale=1.45,xshift=0.4cm]
    \begin{axis}[  xlabel=\footnotesize{$x_{1}$}, y=0.5cm, x=0.5cm,
          xmax=8.5, xmin=-8.5, ymax=3, ymin=-3,
          axis lines=middle,
          restrict y to domain=-7:20,
           xtick = {0},
        ytick= {0},
        yticklabels= {0, $u_{\min}(0)$},
        y axis line style={draw= none},
          enlargelimits]

        \addplot[ domain=-9:-6, smooth, thick, name path=-f] {-(4 - (x+6)*(x+6))^(0.5)};
\addplot[domain=-4.5:-1.5, smooth, thick, name path=-g] {0.5* cos(deg(1.047*x + 1.047*6)) + 1.5};
\addplot[ domain=-6:-4.5, smooth, thick, name path=-h] {0.5 *cos(deg(1.047*x + 1.047*6)) + 1.5};
\addplot[ domain=-1.5:0, smooth, thick, name path=-i] {0.5 *cos(deg(1.047*x + 1.047*6)) + 1.5};
\addplot[domain=0:1.5, smooth, thick, name path=-j] {0.7*cos(deg(1.047*x + 1.047*6)) +1.3};
\addplot[domain=1.5:4.5, smooth, thick, name path=-k] {0.7*cos(deg(1.047*x + 1.047*6)) +1.3};
\addplot[domain=4.5:6, smooth, thick, name path=-l] {0.7*cos(deg(1.047*x + 1.047*6)) +1.3};
\addplot[ domain=6:9, smooth, thick, name path=-m] {(4 - (x-6)*(x-6))^(0.5)}; 
\draw (2,2) node{$B_{0}$};
\draw[color=blue] (7,2.5) node{$C^{+}_{0}$};
\draw[color=blue] (-7,2.5) node{$C^{-}_{0}$};

        \addplot[ domain=-9:-6, smooth, thick, name path=f] {(4 - (x+6)*(x+6))^(0.5)};
\addplot[domain=-4.5:-1.5, smooth, thick, name path=g] {-0.5* cos(deg(1.047*x + 1.047*6)) - 1.5};
\addplot[ domain=-6:-4.5, smooth, thick, name path=h] {-0.5 *cos(deg(1.047*x + 1.047*6)) - 1.5};
\addplot[ domain=-1.5:0, smooth, thick, name path=i] {-0.5 *cos(deg(1.047*x + 1.047*6)) - 1.5};
\addplot[domain=0:1.5, smooth, thick, name path=j] {-0.7*cos(deg(1.047*x + 1.047*6)) -1.3};
\addplot[domain=1.5:4.5, smooth, thick, name path=k] {-0.7*cos(deg(1.047*x + 1.047*6)) -1.3};
\addplot[domain=4.5:6, smooth, thick, name path=l] {-0.7*cos(deg(1.047*x + 1.047*6)) -1.3};
\addplot[ domain=6:9, smooth, thick, name path=m] {-(4 - (x-6)*(x-6))^(0.5)}; 

\addplot[domain=6:6.2, smooth, dashed]{2*(1- 25*(x-6)^(2))^(0.5)};
\addplot[domain=5.8:6, smooth, dashed, name path=n]{2*(1- 25*(x-6)^(2))^(0.5)};
\addplot[domain=6:6.2, smooth, dashed]{-2*(1- 25*(x-6)^(2))^(0.5)};
\addplot[domain=5.8:6, smooth, dashed, name path=-n]{-2*(1- 25*(x-6)^(2))^(0.5)};
\addplot[domain=-6:-5.8, smooth, dashed, name path=o]{2*(1- 25*(x+6)^(2))^(0.5)};
\addplot[domain=-6.2:-6, smooth, dashed]{2*(1- 25*(x+6)^(2))^(0.5)};
\addplot[domain=-6:-5.8, smooth, dashed, name path=-o]{-2*(1- 25*(x+6)^(2))^(0.5)};
\addplot[domain=-6.2:-6, smooth, dashed]{-2*(1- 25*(x+6)^(2))^(0.5)};
\addplot[blue, opacity=0.3] fill between[of =f and -f];
\addplot[blue, opacity=0.3] fill between[of =m and -m];
\addplot[blue, opacity=0.3] fill between[of =n and -n];
\addplot[blue, opacity=0.3] fill between[of =o and -o];

\addplot[domain=-3.15:-2.85, smooth, dashed]{(1-44.44*(x+3)^(2))^(0.5)};
\addplot[domain=-3.15:-2.85, smooth, dashed]{-(1-44.44*(x+3)^(2))^(0.5)};
\addplot[domain=2.9:3.1, smooth, dashed]{0.6*(1-100*(x-3)^(2))^(0.5)};
\addplot[domain=2.9:3.1, smooth, dashed]{-0.6*(1-100*(x-3)^(2))^(0.5)};
\addplot[domain=-0.2:0.2, smooth, dashed]{2*(1-25*(x)^(2))^(0.5)};
\addplot[domain=-0.2:0.2, smooth, dashed]{-2*(1-25*(x)^(2))^(0.5)};
        \end{axis}
    \end{tikzpicture}
    \caption{The caps $C^{\pm}_{0}$ and bridge $B_{0}$ of a rotationally symmetric $N_{0}$.}
    \label{regions}
\end{figure}
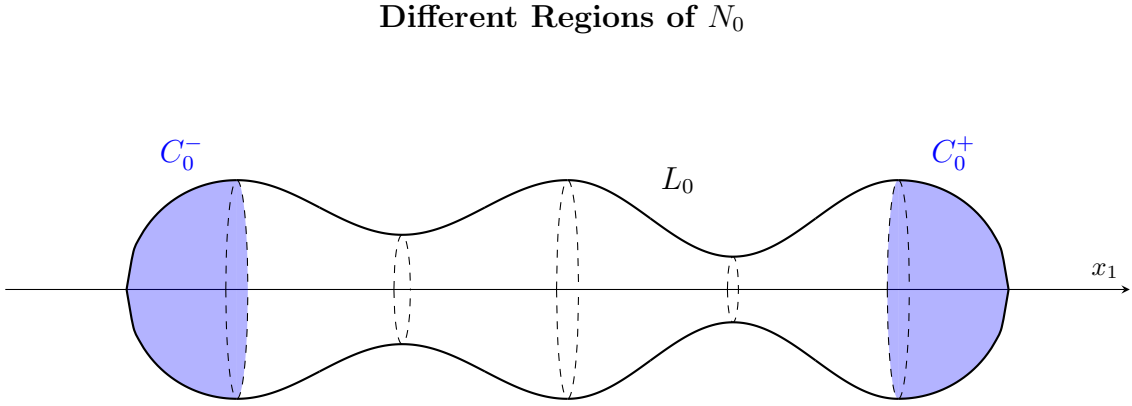

For $B_{0} = \varnothing$-- the case of no ``necks" -- there is a choice of origin on $X_{1} \subset \mathbb{R}^{n+1}$ so that, considering the position vector $\vec{F}_{0}$ with repsect to this point, we have $\langle \nu (x), e_{1} \rangle \leq 0$ when $\langle \vec{F_{0}}(x), e_{1} \rangle = \tilde{u}(x) \leq 0$ and $\langle \nu (x), e_{1} \rangle \geq 0$ when $\tilde{u} \geq 0$. The support function must then satisfy

\begin{equation} \label{supp}
    \langle \vec{F_{0}}(x), \nu \rangle = \tilde{u}(x) \langle \nu (x), e_{1} \rangle + u(x) \langle w, \nu (x) \rangle > 0,
\end{equation}
and so $N_{0}$ is star-shaped-- this ensures global existence for the corresponding $N_{t}$. Thus the problem is non-trivial only when $B_{0} \neq \varnothing$. 

When $B_{0} \neq \varnothing$, we impose an assumption on $N_{0}$ in addition to rotational symmetry. We require that the ratio of the highest and lowest values of $p$ over $\overline{B}_{0}$ is no larger than $n^{\frac{n}{2(n-1)}}$, which will be crucial for controlling the flow speed for the IMCF of $N_{0}$.

\begin{definition} \thlabel{admissible}
An $C^{\infty}$, $H>0$ rotationally symmetric embedding $F_{0}: \mathbb{S}^{n} \rightarrow \mathbb{R}^{n+1}$ is \textbf{admissible} if the principal curvature $p$ of rotation satisfies

\begin{equation} \label{neck}
    \frac{\max_{\overline{B}_{0}} p}{\min_{\overline{B}_{0}} p} < n^{\frac{n}{2(n-1)}}.
\end{equation}
\end{definition}

Although many of the results throughout this paper apply more generally to the IMCF of any $H>0$ rotationally symmetric embedded sphere, this additional $C^{1}$ assumption is needed to obtain control on the relevant geometric quantities over the bridge $B_{t}$ of $N_{t}$. Condition \eqref{neck} places an upper bound on the maximum over $B_{0}$ of the quantity
\begin{equation} \label{v}
    v= \langle w, \nu \rangle^{-1}.
\end{equation}
An upper bound on $v$ limits how ``narrow" the necks of $N_{0}$ are, in the sense that $v$ measures how quickly the height function is changing in the $\hat{v}_{1}$ direction. The condition also requires that the necks cannot be too ``thin", specifically that the ratio of the largest and smallest values of the height $u$ over $B_{0}$ does not exceed $n^{\frac{n}{2(n-1)}}$. Conversely, whenever this ratio for $u$ is smaller than $n^{\frac{n}{2(n-1)}}$ on a rotationally symmetric $N_{0}$, dilating $N_{0}$ in the $x_{1}$ direction by a sufficiently large factor produces an admissible rotationally symmetric surface. Since the IMCF of star-shaped $N_{0}$ is already understood, we demonstrate in Appendix A.2 the existence of an admissible $N_{0}$ which is not star-shaped.

Let us now explain the approach to a localized version of the maximum principle. The solution to \eqref{IMCF} for a rotationally symmetric sphere $N_{0}$ is a one-parameter family of embeddings $F: \mathbb{S}^{n} \times [0,T) \rightarrow \mathbb{R}^{n+1}$. Define the open subsets $C^{+} \subset  \mathbb{S}^{n} \times [0,T)$, $C^{-} \subset \mathbb{S}^{n} \times [0,T)$, and $B \subset \mathbb{S}^{n} \times [0,T)$ by

\begin{eqnarray}
    C^{+}&=& \text{Int}(\cup_{t \in (0,T)} C^{+}_{t} \times \{ t \}), \label{domain1} \\
    C^{-}&=& \text{Int}(\cup_{t \in (0,T)} C^{-}_{t} \times \{t\}), \label{domain2} \\
    B &=& \text{Int}(\cup_{t \in (0,T)} B_{t} \times \{t\}). \label{domain3}
\end{eqnarray}

Here $C^{\pm}_{t}$ and $B_{t}$ are the caps and bridges of \thref{subset} for each embedding $F_{t}$. We will apply maximum principles to each of these domains separately, which introduces a boundary to the problem. These boundaries are especially complicated because they may be non-cylindrical.

\begin{definition}
For a closed manifold $N$ and an open domain $U \subset N \times (0,T)$, let $U_{t}= U \cap (N \times \{t\})$ for $t \in (0,T)$, $U_{0} = \overline{U} \cap (N \times \{ 0\})$, and $U_{T}=\overline{U} \cap (N \times \{T\})$. The \textbf{parabolic boundary} $\partial_{P} U$ of $U$ is  $\partial_{P} U = \partial U \setminus U_{T}$, where $\partial U$ is the topological boundary of $U$ in $N \times [0,T)$. \\ \\ The \textbf{reduced parabolic boundary} $\tilde{\partial}_{P} U$ of $U$ is $\tilde{\partial_{P}} U= U_{0} \cup (\cup_{0 \leq t < T} \partial U_{t})$, where $\partial U_{t}$ is the topological boundary of $U_{t}$ in $N \times \{t\}$.
\end{definition}

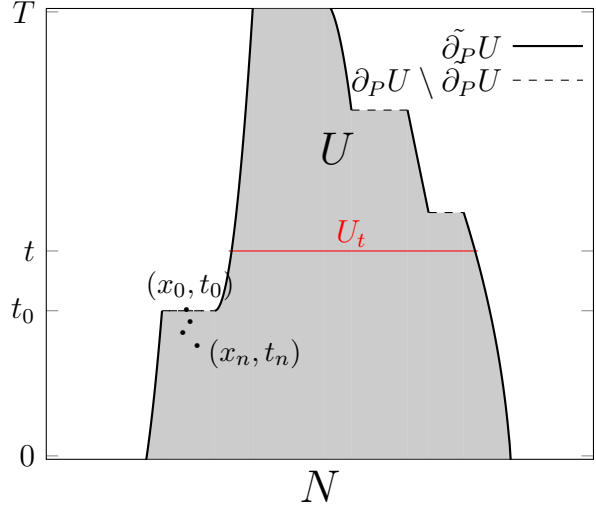
\begin{figure}
\begin{center}
    \textbf{The Reduced Parabolic Boundary of $U$}
    \vspace{0.3cm}
\end{center}
\centering
   \begin{tikzpicture}[ xscale=1.05,yscale=1.05,xshift=-1cm]
\begin{axis}     [ 
      xlabel={\Large{$N$}},
          xmax=5, xmin=-1.5, ymax=1.4, ymin=0.3,
          restrict y to domain=-2:1.8,
           ytick = {0.2, 0.625, 0.8, 1.5},
        yticklabels = {$0$, $t_{0}$, $t$,$T$},
        xtick= {-3},
          enlargelimits]

\draw[red] (0.45,0.8) -- (4,0.8);
\draw(2.2,0.85) node{\color{red}{$U_{t}$}};
\draw(2,1.1) node{\Large{$U$}};

\draw(-0.15,0.625) node{\Large{$\cdot$}};

\draw[thick] (4.5,1.4) -- (5.5,1.4);
\draw[dashed] (4.5,1.3) -- (5.5,1.3);

\draw(4.5,1.4) node[anchor=east]{$\tilde{\partial_{P}} U$};
\draw(4.5,1.3) node[anchor=east]{$\partial_{P} U \setminus \tilde{\partial_{P}} U$};

\addplot [domain= -0.4:0.1, smooth, dashed, name path=z] {0.625};
\draw (0,0.58) node[anchor=north]{\Large{$\cdot$}};
\draw (-0.2,0.62) node[anchor=north]{\Large{$\cdot$}};
\draw (-0.1,0.65) node[anchor=north]{\Large{$\cdot$}};
\draw (0,0.5) node[anchor=west]{\small{$(x_{n},t_{n})$}};
\draw (-0.1,0.625) node[anchor=south]{\small{$(x_{0},t_{0})$}};
\addplot [domain=-1:-0.5, smooth, thick, name path=f] {2.5*(x+1)^(2)};
\addplot [domain=-0.5:0.25, smooth, dashed, name path=g] {0.625};
\addplot [domain=0.25:0.8, smooth, thick, name path=h] {3*(x-0.25)^(2)+0.625};
\addplot [domain=0.8:1.8, smooth, thick, name path=i] {1.5325};
\addplot [domain=1.8:2.2, smooth, thick, name path=j] {-2*(x-1.8)^(2)+1.5325};
\addplot [domain=2.2:3, smooth, dashed, name path=k] {1.2125};
\addplot[domain=3:3.3, smooth, thick, name path=l] {-(x-3)+1.2125};
\addplot[domain=3.3:3.8, smooth, dashed, name path=m] {0.9125};
\addplot[domain=3.8:4.5, smooth, thick, name path=n] {1.091*(4.5-x)^(0.5)};

\addplot[domain=-1:-0.5, smooth, thick, name path=-f] {0};
\addplot[domain=-0.5:0.25, smooth, thick, name path=-g] {0};
\addplot[domain=0.25:0.8, smooth, thick, name path=-h] {0};
\addplot [domain=0.8:1.8, smooth, thick, name path=-i] {0};
\addplot [domain=1.8:2.2, smooth, thick, name path=-j] {0};
\addplot [domain=2.2:3, smooth, thick, name path=-k] {0};
\addplot[domain=3:3.3, smooth, thick, name path=-l] {0};
\addplot[domain=3.3:3.8, smooth, thick, name path=-m] {0};
\addplot[domain=3.8:4.5, smooth, thick, name path=-n] {0};

\addplot [
      thick,
     fill=gray!,opacity=0.4
    ] fill between[of=-f and f];

\addplot [
      thick,
     fill=gray!,opacity=0.4
    ] fill between[of=-g and g];
    
\addplot [
      thick,
     fill=gray!,opacity=0.4
    ] fill between[of=-h and h];
    
\addplot [
      thick,
     fill=gray!,opacity=0.4
    ] fill between[of=-i and i];

\addplot [
      thick,
     fill=gray!,opacity=0.4
    ] fill between[of=-j and j];
    
\addplot [
      thick,
     fill=gray!,opacity=0.4
    ] fill between[of=-k and k];
    
\addplot [
      thick,
     fill=gray!,opacity=0.4
    ] fill between[of=-l and l];

\addplot [
      thick,
     fill=gray!,opacity=0.4
    ] fill between[of=-m and m];

\addplot [
      thick,
     fill=gray!,opacity=0.4
    ] fill between[of=-n and n];
\end{axis}
    \end{tikzpicture}
    
    \caption{For a non-cylindrical domain $U \subset N \times [0,T)$, the reduced parabolic boundary $\tilde{\partial}_{P} U$ does not include the dotted parts of $\partial_{P} U$. If every point in $\partial_{P} U \setminus \tilde{\partial_{P}}U$ can be approached in $U$ from below in time, $\sup_{U} f$ cannot occur at any of these points.}
    \label{non_cylinder}
\end{figure}

In general, $\tilde{\partial}_{P} U \neq \partial_{P} U$, see Figure \ref{non_cylinder}. In this setting, the domains \eqref{domain1}-\eqref{domain3} are non-cylindrical when the $x_{1}$-coordinate of the left-most or right-most maxima in $\mathbb{R}^{n+1}$ of the height $u$ are not continuous functions of time. This may happen as maxima and minima of $u$ may spontaneously form or disappear during the evolution. With this possibility in mind, we employ a modified version of the maximum principle over such a domain detailed in \cite{maria3}, see also \cite{lumer} and \cite{head2019singularity}.

\begin{theorem}[Non-Cylindrical Maximum Principle] \thlabel{noncyl}
Let $F: N^{n} \times [0,T) \rightarrow \mathbb{R}^{n+1}$ be a solution of the Inverse Mean Curvature Flow \eqref{IMCF} over a closed manifold $N$. For a domain $U \subset N \times [0,T)$ and $f \in C^{2,1}(U) \cap C(\overline{U})$, suppose for a smooth vector field $\eta$ over $U$ we have

\begin{equation*}
    (\partial_{t} - \frac{1}{H^{2}} \Delta) f \leq \langle \eta, \nabla f \rangle.
\end{equation*}
(Resp. $\geq$ at a minimum) Here $\Delta$ and $\nabla$ are the Laplacian and gradient operators over $N_{t}$, respectively. Then

\begin{equation*}
    \sup_{U} f \leq \sup_{\partial_{P} U} f
\end{equation*}
(Resp. $\inf_{U} f \geq \inf_{\partial_{P} U} f$). Furthermore, suppose that $f$ has a positive supremum over $U$ and that each $(x_{0},t_{0}) \in \partial_{P} U \setminus \tilde{\partial}_P U$ is a limit point of $U \cap \{ t < t_{0} \}$. Then
\begin{equation*}
    \sup_{U} f \leq \sup_{\tilde{\partial}_{P} U} f
\end{equation*}
(Resp. $\inf_{U} f \geq \inf_{\tilde{\partial}_{P} U} f$ for a positive minimum).
\end{theorem}

The condition that a sequence from $U$ approaches each point in $\partial_{P} U \setminus \tilde{\partial}_{P} U$ from below in time is not included in \cite{head2019singularity} and \cite{maria3}, but it seems to be necessary for the second part of the statement due to the time asymmetry of parabolic equations. For this reason, we prove \thref{noncyl} with the domain geometry assumption in mind in Appendix A.1. 

We claim that the domain $B$ defined in \eqref{domain3} satisfies this additional requirement on domain geometry. $C^{+}_{t}$ and $C^{-}_{t}$ are connected open sets for each $t$, and so the image of the function $\tilde{u}$ over each of these is a connected interval. Essentially, we must show that these these intervals cannot instantaneously shrink as time progresses. 

\begin{proposition}
Let $F_{0}: \mathbb{S}^{n} \rightarrow \mathbb{R}^{n+1}$ be an $H>0$, rotationally symmetric embedding, and $F: \mathbb{S}^{n} \times [0,T_{\max}) \rightarrow \mathbb{R}^{n+1}$ the corresponding maximal solution to \eqref{IMCF}. For each $t \in [0,T_{\max})$, define $a(t), b(t) \in \mathbb{R}$ by $C^{-}_{t}= \{ x \in \mathbb{S}^{n} | \tilde{u}(x,t) < a(t) \}$ and $C^{+}_{t} =\{x \in \mathbb{S}^{n} | \tilde{u}(x,t) > b(t) \}$, respectively. Then for each $t_{0} \in [0,T_{\max})$,

\begin{equation} \label{liminf}
     a(t_{0}) \leq \liminf_{t \searrow t_{0}} a(t),
\end{equation}
resp. $b(t_{0}) \geq \limsup_{t \searrow t_{0}} b(t)$.

As a consequence, for any $x_{0} \in C^{-}_{t_{0}}$ (resp. $C^{+}_{t_{0}}$), there is a neighborhood $\mathcal{O}$ of $(x_{0},t_{0})$ in $\mathbb{S}^{n} \times [0,T_{\max})$ such that $\mathcal{O} \cap \{ t > t_{0} \} \subset C^{-}$ (resp. $C^{+}$). 
\end{proposition}
\begin{proof}
 We present the proof for $a(t)$, as the proof for $b(t)$ is identical. Suppose $\liminf_{t \searrow t_{0}} a(t) < a(t_{0})$. $\langle e_{1}, \nu \rangle (x,t_{0}) \leq 0$ whenever $\liminf_{t \searrow t_{0}} a(t) < \tilde{u}(x,t_{0}) < a(t_{0})$, but $\langle e_{1}, \nu \rangle \neq 0$ at all of these points because the height function $u(x,t_{0})$ cannot be constant over a positive-measure subset of $\mathbb{S}^{n}$ (Lemma 4.7 in \cite{angenent} establishes this in the graphical gauge of a rotationally symmetric solution of mean curvature flow, and applying the Sturmian Theorem for inverse mean curvature flow yields the same conclusion). Then there must exist a $c \in \mathbb{R}$ so that $\liminf_{t \searrow t_{0}} a(t) < c < a(t_{0})$ and $\langle e_{1}, \nu \rangle (x,t_{0}) < 0$ when $\tilde{u}(x,t_{0})=c$. Define the spatial domain $U \subset C^{-}_{t_{0}} \subset \mathbb{S}^{n}$ by
 
 \begin{equation}
     U = \{ x \in \mathbb{S}^{n} | \tilde{u}(x,t_{0}) < c \}.
 \end{equation}
 Then
 
 \begin{eqnarray}
      \langle e_{1}, \nu \rangle (x,t_{0}) &\leq& 0 \hspace{0.5cm} x \in U, \\
      \langle e_{1}, \nu \rangle (x,t_{0}) &<& 0 \hspace{0.5cm} x \in \partial U, \\
      \{ x \in \mathbb{S}^{n} | \tilde{u}(x,t_{0}) &=& \liminf_{t \searrow t_{0}} a(t) \} \subset U. \label{U}
 \end{eqnarray}
By continuity of $\langle e_{1}, \nu \rangle$ in time, we may choose a time interval $[t_{0},t_{0} + \epsilon)$ so that $\langle e_{1}, \nu \rangle (x,t) < 0$ for $(x,t) \in \partial U \times [t_{0}, t_{0} + \epsilon)$. We will show that $\langle e_{1}, \nu \rangle (x,t) \leq 0$ on $U \times [t_{0},t_{0} + \epsilon)$. First of all, the evolution equation \eqref{evolution_e} for $\langle e_{1}, \nu \rangle$ from \thref{evolution_equations} reads
\begin{equation*} 
    (\partial_{t} - \frac{1}{H^{2}} \Delta) \langle \nu, e_{1} \rangle (x,t)= \frac{|A|^{2}}{H^{2}} \langle \nu, e_{1} \rangle (x,t).
\end{equation*}
Choose 
\begin{equation*}
   \beta > \sup_{(x,t) \in \mathbb{S}^{n} \times [t_{0},t_{0} + \epsilon)} \frac{|A|^{2}}{H^{2}},
\end{equation*}
and define $f(x,t)=e^{\beta(t_{0}-t)} \langle e_{1}, \nu \rangle (x,t)$. Then
\begin{equation*}
f(x,t) \leq 0 \hspace{0.5cm} \text{  on    } \partial_{P} (U \times [t_{0},t_{0} + \epsilon)) = (U \times \{ t_{0} \}) \cup (\partial U \times [t_{0},t_{0} + \epsilon)),
\end{equation*}
and at a positive spacetime maximum $(x,t) \in U \times [t_{0},t_{0} + \epsilon)$ of $f$
\begin{equation} \label{gap}
    (\partial_{t} - \frac{1}{H^{2}} \Delta) f(x,t) = (\frac{|A|^{2}}{H^{2}} - \beta) f(x,t) < 0.
\end{equation}
This is a contradiction, and so $\langle e_{1}, \nu \rangle \leq 0$ on $U \times [t_{0}, t_{0} + \epsilon) \subset \mathbb{S}^{n} \times [0,T_{\max})$.

Now, take a sequence of times $t_{k}$ that decrease to $t_{0}$ such that $\lim_{k} a(t_{k})= \liminf_{t \searrow t_{0}} a(t)$. By our definition of $C^{-}_{t}$, over any small enough neighborhood $V \subset \mathbb{S}^{n}$ of the boundary $\partial C^{-}_{t} = \{ x \in \mathbb{S}^{n} | \tilde{u}(x,t)=a(t) \}$ we have $\langle e_{1}, \nu \rangle (x,t) > 0$ on $V \setminus \overline{C^{-}_{t}}$. This allows us to find a sequence of points $x_{k} \in \mathbb{S}^{n} \setminus \overline{C^{-}_{t_{k}}}$ with $\langle e_{1}, \nu \rangle(x_{k},t_{k}) > 0$ and $\lim_{k} \tilde{u}(x_{k},t_{k})= \liminf_{t \searrow t_{0}} a(t)$. After passing to a subsequence if necessary, we are left with a sequence satisfying

\begin{eqnarray}
    \langle e_{1}, \nu \rangle (x_{k},t_{k}) &>& 0, \\
    (x_{k},t_{k}) &\rightarrow& (x_{0},t_{0}) \hspace{0.3cm} \text{  S.T.  } \tilde{u}(x_{0},t_{0}) = \liminf_{t \searrow t_{0}} a(t),
\end{eqnarray}
 In view of \eqref{U}, the second line implies that $(x_{k},t_{k}) \in U \times [t_{0}, t_{0} + \epsilon)$ for large enough $k$, but then the first line contradicts non-positivity of $\langle e_{1}, \nu \rangle$ on this set. This completes the proof of the first part.
 
 For the second part, suppose no such neighborhood exists. Then there must be a sequence $(x_{n},t_{n}) \in \mathbb{S}^{n} \times [0,T_{\max})$ converging to $(x_{0},t_{0}) \in C^{-}_{t_{0}} \times \{ t_{0} \}$ with $t_{n} > t_{0}$ and $x_{n} \in \mathbb{S}^{n} \setminus C^{-}_{t_{n}}$. Then $a(t_{n}) \leq \tilde{u}(x_{n},t_{n})$, and the using convergence of $(x_{n},t_{n})$
 \begin{equation}
      \liminf_{n} a(t_{n}) \leq \lim_{n} \tilde{u}(x_{n},t_{n}) = \tilde{u}(x_{0},t_{0}) < a(t_{0}).
 \end{equation}
This contradicts the first part, so any sequence approaching $(x_{0},t_{0})$ from above in time lies in $C^{-}$.
\end{proof}

\begin{theorem}
Let $F_{0}: \mathbb{S}^{n} \rightarrow \mathbb{R}^{n+1}$ be an $H>0$, rotationally symmetric $C^{\infty}$ embedding, and $F: \mathbb{S}^{n} \times [0,T_{\max}) \rightarrow \mathbb{R}^{n+1}$ the corresponding maximal solution to \eqref{IMCF}. Then for any $(x_{0},t_{0}) \in \partial_{P} B \setminus \tilde{\partial_{P}} B \subset \mathbb{S}^{n} \times [0,T)$ with $t_{0}>0$, there is a sequence $(x_{n},t_{n}) \in L$ approaching $(x_{0},t_{0})$ with $t_{n} < t_{0}$. In particular, $B$ satisfies the hypothesis in the second part of \thref{noncyl}.
\end{theorem}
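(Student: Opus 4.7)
The plan is to leverage the bridge-shrinkage encoded in the previous proposition together with continuity of the embedding $F$ to produce a sequence from below in time. Writing $\phi(x,t) = \langle F_{t}(x), e_{1} \rangle$, we have $L_{t} = \{x \in \mathbb{S}^{n} : a(t) < \phi(x,t) < b(t)\}$. Set $a_{*}(t_{0}) := \limsup_{t \nearrow t_{0}} a(t)$, $a^{*}(t_{0}) := \liminf_{t \searrow t_{0}} a(t)$, $b^{*}(t_{0}) := \liminf_{t \nearrow t_{0}} b(t)$, $b_{*}(t_{0}) := \limsup_{t \searrow t_{0}} b(t)$. The previous proposition gives $(a^{*}(t_{0}), b_{*}(t_{0})) \subseteq (a_{*}(t_{0}), b^{*}(t_{0}))$, and by the definition of limsup/liminf the bridge interval $(a(t), b(t))$ for $t < t_{0}$ sufficiently close contains $(a_{*}(t_{0}) + \epsilon, b^{*}(t_{0}) - \epsilon)$ for every $\epsilon > 0$.

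Given $(x_{0},t_{0}) \in \partial_{P} L$ with $t_{0} > 0$, I would first pick any sequence $(y_{k}, s_{k}) \in L$ converging to $(x_{0}, t_{0})$ and split by the limiting behavior of $s_{k}$. If $s_{k} < t_{0}$ infinitely often, the required sequence is immediate. If $s_{k} = t_{0}$ infinitely often, then openness of $L$ in $\mathbb{S}^{n} \times [0,T)$ around $(y_{k}, t_{0})$ supplies a spacetime box meeting $\{t < t_{0}\}$ inside $L$, again giving the sequence.

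The essential case is $s_{k} > t_{0}$ for all large $k$. Passing to the limit in $a(s_{k}) < \phi(y_{k},s_{k}) < b(s_{k})$, the liminf/limsup bounds yield $\phi(x_{0},t_{0}) \in [a^{*}(t_{0}), b_{*}(t_{0})] \subseteq [a_{*}(t_{0}), b^{*}(t_{0})]$. If $\phi(x_{0},t_{0})$ lies strictly inside $(a_{*}(t_{0}), b^{*}(t_{0}))$, then continuity of $\phi$ combined with the containment above gives $\phi(x_{0},t) \in (a(t),b(t))$ for $t < t_{0}$ close, so $x_{0} \in L_{t}$. To promote $x_{0} \in L_{t}$ to $(x_{0},t) \in L$, I would pick $t$ in the cofinite set of continuity points of $a$ and $b$, which exists by the Sturmian-based finiteness in \thref{sturmian}, and use a small spacetime box around $(x_{0}, t)$.

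The main obstacle will be the borderline case $\phi(x_{0},t_{0}) \in \{a_{*}(t_{0}), b^{*}(t_{0})\}$, where $x_{0}$ sits on the limiting bridge boundary from below and the preceding argument only yields a weak inequality. I would resolve this by perturbing: the graphical, embedded structure of $N_{t_{0}}$ from \thref{graph} and \thref{embed} ensures $\phi(\cdot, t_{0})$ is a submersion transverse to the bridge boundary near $x_{0}$, so there exist $\tilde{x}^{(j)} \to x_{0}$ with $\phi(\tilde{x}^{(j)}, t_{0})$ strictly inside $(a_{*}(t_{0}), b^{*}(t_{0}))$. Running the non-borderline argument on each $\tilde{x}^{(j)}$ and diagonalizing then yields the desired sequence $(x_{n}, t_{n}) \in L$ with $t_{n} < t_{0}$ converging to $(x_{0}, t_{0})$.
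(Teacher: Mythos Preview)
Your approach is essentially the paper's: both pass from a sequence $(y_k,s_k)\in L$ approaching $(x_0,t_0)$ with $s_k>t_0$ to the inequality $\limsup_{t\nearrow t_0}a(t)\le \phi(x_0,t_0)\le \liminf_{t\nearrow t_0}b(t)$ via the previous proposition, and then exhibit points of $L$ at earlier times near the $x_1$-level $c=\phi(x_0,t_0)$. The paper simply asserts $a(t)<c<b(t)$ for $t<t_0$ close and picks points at level $c$ with the same angular coordinate; you instead do a cleaner three-way split on the sign of $s_k-t_0$ and then separate strict-interior from borderline.

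Two small remarks. First, your Step~6 (promoting $x_0\in L_t$ to $(x_0,t)\in L$ via continuity points of $a,b$) is unnecessary: in the strict-interior case there is a fixed $\epsilon>0$ with $a(t')<\phi(x_0,t_0)-\epsilon$ and $b(t')>\phi(x_0,t_0)+\epsilon$ for \emph{all} $t'\in(t_0-\delta,t_0)$, so a small spacetime box around $(x_0,t)$ lies in $\bigcup L_s\times\{s\}$ by continuity of $\phi$ alone. No regularity of $a,b$ is needed here.

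Second, your borderline perturbation needs $a_*(t_0)<b^*(t_0)$, which you do not explicitly justify. This follows from Sturmian monotonicity: since $L_{s_k}\neq\varnothing$ for $s_k>t_0$, the zero count of $y'_t$ (hence the existence of at least three critical points) persists for all $t\le s_k$, and one checks the first and last maxima cannot collapse onto the same level as $t\nearrow t_0$ while remaining separated for $t\searrow t_0$. The paper does not isolate this borderline case at all, so your treatment is in fact more careful than the original on this point.
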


\begin{proof}
Take $(x_{0},t_{0}) \in \partial_{P} B \setminus \tilde{\partial}_{P} B$ with $t_{0} >0$. We need to show that $(x_{0},t_{0})$ is a limit point of $B \cap \{ t < t_{0} \}$.

Suppose $(x_{0},t_{0})$ is not a limit point of $B \cap \{ t < t_{0} \}$. Then there must exist a neighborhood $\mathcal{O}$ of $(x_{0},t_{0})$ such that $\mathcal{O} \cap \{t < t_{0}\} \subset C^{-}$ or $\mathcal{O} \cap \{ t < t_{0} \} \subset C^{+}$ (if $\mathcal{O}$ simultaneously intersects $C^{+}$ and $C^{-}$ it must also intersect $B$). Say W.L.0.G. $\mathcal{O} \cap \{ t < t_{0} \} \subset C^{-}$. We will show that $(x_{0},t_{0}) \in C^{-}_{t_{0}}$. Applying the previous theorem would imply that $(x_{0},t_{0})$ lies in the open set $C^{-}$, and this contradicts $(x_{0},t_{0}) \in \partial_{P} B$.

If $\mathcal{O} \cap \{ t < t_{0} \} \subset C^{-}$, there is an increasing sequence $t_{k} \nearrow t_{0}$ and corresponding points $x_{k} \in C^{-}_{t_{k}}$ that converge to $x_{0} \in \mathbb{S}^{n}$. Since $\tilde{u}(x_{k},t_{k}) \leq a(t_{k})$ by definition, we have $\tilde{u}(x_{0},t_{0}) \leq \limsup a(t_{k})$. For a fixed $ c \leq \tilde{u}(x_{0},t_{0})$, there are also points $y_{k} \in C^{-}_{t_{k}}$ with $\tilde{u}(y_{k},t_{k})=c$. 

Passing to a subsequence if neccessary, $(y_{k},t_{k})$ converge to some $(y,t_{0}) \in \mathbb{S}^{n} \times \{ t_{0} \}$ with $\tilde{u}(y,t_{0})= c$. Since $\langle e_{1}, \nu \rangle (y_{k},t_{k}) \leq 0$, we have $\langle e_{1}, \nu \rangle (y,t_{0}) \leq 0$. This means that $\langle e_{1}, \nu \rangle (x,t_{0}) \leq 0$ over the set $\{ x \in \mathbb{S}^{n} | \tilde{u}(x,t_{0}) \leq \tilde{u} (x_{0},t_{0}) \}$. Furthermore, $(x_{0},t_{0}) \not \in \partial C^{-}_{t_{0}}$ because $\partial C^{-}_{t_{0}} \subset \partial B_{t_{0}} \subset \tilde{\partial}_{P} B$. Altogether,

\begin{equation}
    \{ x \in \mathbb{S}^{n} | \tilde{u}(x,t_{0}) \leq \tilde{u}(x_{0},t_{0}) \} \subset C^{-}_{t_{0}}.
\end{equation}
In view of the previous theorem, there are neighborhoods $\mathcal{O}, \mathcal{O}'$ of $(x_{0},t_{0})$ in $\mathbb{S}^{n} \times [0,T_{\max})$ so that $\mathcal{O} \cap \{ t < t_{0}\} \subset C^{-}$ and $\mathcal{O}' \cap \{ t > t_{0} \} \subset C^{-}$. Then $(x_{0},t_{0}) \in \mathcal{O} \cap \mathcal{O'} \subset C^{-}$, but this contradicts $(x_{0},t_{0}) \in \partial_{P} B$. 
\end{proof}
\section{Evolution Equations}
\hspace{0.5cm} In this section, we determine evolution equations for any rotationally symmetric solution of \eqref{IMCF}. We present evolution equations for the mean curvature $H$, height function $u$, and the quantity $v= (\langle w, \nu(x,t) \rangle)^{-1}$. We also include the equation for support function $\langle \vec{F}-\vec{x}_{0}, \nu \rangle$ of the embedding taken with respect to a fixed point $x_{0} \in \mathbb{R}^{n+1}$, since this plays a role in the analysis of the caps.

\begin{theorem}[Evolution Equations for IMCF] \thlabel{evolution_equations}
Let $F_{0}: \mathbb{S}^{n} \rightarrow \mathbb{R}^{n+1}$ be a $C^{\infty}$, $H>0$ rotationally symmetric embedding, and $F: \mathbb{S}^{n} \times [0,T) \rightarrow \mathbb{R}^{n+1}$ the corresponding maximal solution to \eqref{IMCF}. Then for a fixed vector $\vec{e} \in \mathbb{R}^{n+1}$ and point $\vec{x}_{0} \in \mathbb{R}^{n+1}$, the following evolution equations hold

\begin{eqnarray} \label{evolution}
    \partial_{t} \nu &=& \frac{1}{H^{2}} \nabla H; \label{evolution1} \\
    (\partial_{t} -\frac{1}{H^{2}} \Delta) \langle \nu, \vec{e} \rangle &=& \frac{|A|^{2}}{H^{2}} \langle \nu, \vec{e} \rangle \label{evolution_e} \\
    (\partial_{t} - \frac{1}{H^{2}} \Delta) H &=& -\frac{|A|^{2}}{H^{2}}H -  2\frac{|\nabla H|^{2}}{H^{3}}; \label{evolution2} \\
    (\partial_{t} - \frac{1}{H^{2}} \Delta) H^{-1} &=& \frac{|A|^{2}}{H^{2}}H^{-1}; \label{evolution3} \\
  (\partial_{t} - \frac{1}{H^{2}} \Delta) \langle \vec{F} - \vec{x}_{0}, \nu \rangle &=& \frac{|A|^{2}}{H^{2}} \langle \vec{F} - \vec{x}_{0}, \nu \rangle; \label{evolution_extra} \\
    (\partial_{t} - \frac{1}{H^{2}} \Delta) u &=& \frac{2p}{H}u  - \frac{(n-1)p^{2}}{H^{2}} v^{2} u; \label{evolution4} \\
    (\partial_{t} - \frac{1}{H^{2}} \Delta) v &=& -\frac{|A|^{2}}{H^{2}} v  + \frac{(n-1)p^{2}}{H^{2}} v^{3} -  2\frac{|\nabla v|^{2}}{H^{2}v}; \label{evolution5}
\end{eqnarray}
\end{theorem}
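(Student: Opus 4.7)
The plan is to follow the standard Huisken-style framework: first derive evolution equations for the fundamental geometric objects $g_{ij}$, $\nu$, $h_{ij}$, and $H$ directly from the flow equation $\partial_t F = H^{-1}\nu$, then read off the stated evolutions of derived quantities via the chain rule combined with the Gauss--Weingarten formulas and the Codazzi identity $\nabla^i h_{ij} = \nabla_j H$. The auxiliary identities I would establish first are $\partial_t g_{ij} = 2 h_{ij}/H$ (from differentiating $g_{ij} = \langle \partial_i F, \partial_j F\rangle$ in $t$), $\partial_t h_{ij} = -\nabla_i \nabla_j(1/H) + h_i^k h_{kj}/H$ (the standard speed-$f\nu$ formula with $f=1/H$), and equation \eqref{evolution1}, which comes instantly from differentiating $\langle \nu, \partial_i F\rangle = 0$ in $t$ and using $\langle \partial_t \nu, \nu\rangle = 0$.

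Equations \eqref{evolution_e} and \eqref{evolution_extra} would follow by computing $\Delta\langle \nu,\vec e\rangle$ and $\Delta\langle F-\vec x_0,\nu\rangle$ from Gauss--Weingarten and Codazzi; the $\langle \nabla H,\cdot\rangle$ terms picked up from the Laplacian cancel the corresponding terms from $\partial_t$, leaving only the stated $|A|^2/H^2$ reaction term. For \eqref{evolution2} I would combine $g^{ij}\partial_t h_{ij}$ with $(\partial_t g^{ij})h_{ij} = -2|A|^2/H$: the $h_i^k h_{kj}/H$ piece traces to $|A|^2/H$, and the remaining $-\Delta(1/H) = H^{-2}\Delta H - 2 H^{-3}|\nabla H|^2$ converts into the stated right-hand side. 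Equation \eqref{evolution3} then follows from \eqref{evolution2} through the general identity
\[
(\partial_t - H^{-2}\Delta)(1/f) = -f^{-2}(\partial_t - H^{-2}\Delta) f + 2 H^{-2} f^{-3}|\nabla f|^2
\]
applied to $f = H$.

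The substantive content is in \eqref{evolution4} and \eqref{evolution5}, because $\hat w$ is not ambient-constant. Here I would exploit rotational symmetry in two places. First, on any rotationally symmetric surface the unit normal satisfies $\nu = \langle \nu, e_1\rangle e_1 + \langle \nu, \hat w\rangle \hat w$ away from the axis, so a short calculation of $\partial_t \hat w = \nabla_{\partial_t F}\hat w$ using the Euclidean formula $\nabla_X \hat w = u^{-1}(X - \langle X, e_1\rangle e_1 - \langle X, \hat w\rangle \hat w)$ shows $\partial_t \hat w = 0$, whence $\partial_t u = \langle \hat w, \nu\rangle/H = 1/(Hv)$ and $\partial_t \langle \hat w, \nu\rangle = H^{-2}\langle \nabla H, \hat w\rangle$. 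Second, I would treat $u$ as the restriction to $N_t$ of the ambient function $r(x) = \mathrm{dist}(x, \text{axis})$ and apply
\[
\Delta_{N_t} r = \Delta_{\mathbb{R}^{n+1}} r - \nabla^2 r(\nu, \nu) - H\langle \nabla r, \nu\rangle,
\]
using $\Delta_{\mathbb{R}^{n+1}} r = (n-1)/r$ and the fact that $\nabla^2 r(\nu, \nu) = 0$ precisely because $\nu \in \mathrm{span}(e_1, \hat w)$. This gives $\Delta u = (n-1)/u - H/v$, and substituting $1/u = pv$ delivers \eqref{evolution4}. For \eqref{evolution5} I would first compute $(\partial_t - H^{-2}\Delta)\langle \hat w, \nu\rangle$ by the same method---the Codazzi cancellation still works, and the trace of $h^{kj}\nabla_k\langle \hat w, \partial_j F\rangle$ in an adapted frame whose principal directions are aligned to the rotational symmetry produces $(n-1)p/u$ from the single nontrivial cross-term and $-|A|^2/v$ from the curvature part---and then invert via $v = 1/\langle \hat w, \nu\rangle$, which automatically generates the $-2|\nabla v|^2/(H^2 v)$ correction.

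The main obstacle I anticipate is careful bookkeeping of the contributions from $\nabla \hat w$ in \eqref{evolution4}--\eqref{evolution5}: without exploiting rotational symmetry early and consistently one easily picks up spurious cross-terms or double-counts Laplacian contributions. The identity $\nu \in \mathrm{span}(e_1, \hat w)$ is the key simplification throughout, and I would apply it as early as possible at each step. Beyond that, the remainder is clean index calculus rather than any genuinely new estimate.
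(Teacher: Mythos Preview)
Your proposal is correct and structurally identical to the paper's argument: separate the time derivative and the Laplacian of each quantity, then combine and rewrite in terms of $p=(uv)^{-1}$. The only difference is one of self-containment. The paper simply cites Huisken--Ilmanen and Choi--Daskalopoulos for equations \eqref{evolution1}--\eqref{evolution_extra}, and cites Huisken (1990) and Athanassenas for the Laplacian formulas $\Delta u = (n-1)/u - H/v$ and $\Delta v = -v^{2}\langle \nabla H,\hat w\rangle + |A|^{2}v - (n-1)u^{-2}v + 2v^{-1}|\nabla v|^{2}$, whereas you derive these from scratch via the ambient restriction formula and the observation that $\nu\in\mathrm{span}(e_{1},\hat w)$ kills the normal Hessian term. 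Your route is more transparent about \emph{where} rotational symmetry enters (precisely in making $\nabla^{2}r(\nu,\nu)=0$ and $\partial_{t}\hat w=0$), at the cost of some additional index bookkeeping in the $v$-computation; the paper's route is shorter but outsources the substance to the literature.
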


\begin{proof}
The first five equations are available in \cite{huisken2}, section 1, and \cite{choi}, section 2. For equations \eqref{evolution4} and \eqref{evolution5}, the Laplacians of the quantities $u= \langle \vec{F}, w \rangle$ and $v= \langle \nu, w \rangle^{-1}$ are shown in, e.g. \cite{huisken1990}, section 5, and \cite{maria}, section 3, to be

\begin{eqnarray}
    \Delta_{N} u &=& \frac{n-1}{u} - \frac{H}{v}, \nonumber \\
    \Delta_{N} v &=& -v^{2} \langle \nabla H, w \rangle + |A|^{2} v - \frac{n-1}{u^{2}} v + 2 v^{-1} |\nabla v|^{2}. \nonumber
\end{eqnarray}

On the other hand, the time derivatives of these quantities may be computed as

\begin{eqnarray}
    \frac{\partial u}{\partial t} &=& \langle w, \frac{\partial F}{\partial t} \rangle = \frac{1}{Hv}, \nonumber \\
    \frac{\partial v}{\partial t} &=& -v^{2} \langle w, \frac{\partial \nu}{\partial t} \rangle = -v^{2} \langle w, \frac{\nabla H}{H^{2}} \rangle. \nonumber
\end{eqnarray}
Noting that $p=(uv)^{-1}$, \eqref{evolution4} and \eqref{evolution5} follow. 

\end{proof}
\section{A Priori Height Estimates}
In this section, we estimate the position vector $\vec{F}(x,t)$ of any rotationally symmetric solution to IMCF. We utilize a one-sided version of the well-known avoidance principle for MCF proven in Section 3 of \cite{me}. 

\begin{theorem}[One-Sided Avoidance Principle] \thlabel{avoidance}
Let $\{ N_{t} \}_{0 \leq t < T}$ and $\{ \tilde{N}_{t} \}_{0 \leq t < T}$ be two closed, connected solutions to \eqref{IMCF}. For each $t \in [0,T)$, let $E_{t} \subset \mathbb{R}^{n+1}$ and $\tilde{E}_{t} \subset \mathbb{R}^{n+1}$ be the bounded, open domains with $N_{t}=\partial E_{t}$ and $\tilde{N}_{t}=\partial \tilde{E}_{t}$. If $E_{0} \subset \tilde{E}_{0}$ then $E_{t} \subset \tilde{E}_{t}$, and $\text{dist}(N_{t},\tilde{N}_{t})$ is non-decreasing.
\end{theorem}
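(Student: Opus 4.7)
The plan is to argue by contradiction via a first-contact time argument, backed up by the parabolic maximum principle applied to the pairwise distance function on the product manifold $N \times \tilde N \times [0,T)$.

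First I would define
\[
t^{\ast} = \sup\{\, s \in [0,T) : E_t \subset \tilde E_t \text{ for all } t \in [0,s]\,\}.
\]
The continuity of $F$ and $\tilde F$ in $t$, together with the compactness of $\overline{E}_0$ and the strict initial containment $E_0 \subset \tilde E_0$, forces $t^{\ast} > 0$. If $t^{\ast} = T$ the containment claim is done, so assume for contradiction that $t^{\ast} < T$. By continuity, $\overline{E}_{t^{\ast}} \subset \overline{\tilde E}_{t^{\ast}}$, while the failure of strict containment produces a contact point $p \in N_{t^{\ast}} \cap \tilde N_{t^{\ast}}$ at which $N_{t^{\ast}}$ is internally tangent to $\tilde N_{t^{\ast}}$: the tangent planes coincide and the outward normals $\nu_N(p)$ and $\tilde\nu_{\tilde N}(p)$ agree.

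At such a $p$, the pointwise comparison of principal curvatures for two $C^{2}$ hypersurfaces with one locally enclosed by the other gives $\kappa_i^{N}(p) \geq \kappa_i^{\tilde N}(p)$ for every aligned principal direction, and summing yields
\[
H_N(p) \;\geq\; H_{\tilde N}(p) \;>\; 0, \qquad \tfrac{1}{H_N(p)} \;\leq\; \tfrac{1}{H_{\tilde N}(p)}.
\]
Heuristically this already forbids crossing: since both flows move $p$ in the common outward normal direction and the outer surface does so at least as fast as the inner, the inner cannot overtake the outer. To upgrade this into a rigorous contradiction (and to handle the non-strict touching case $H_N(p) = H_{\tilde N}(p)$), I would apply the parabolic maximum principle to
\[
\Phi(x, \tilde x, t) \;=\; \tfrac{1}{2}\bigl|F(x,t) - \tilde F(\tilde x, t)\bigr|^{2}
\]
on the product $N \times \tilde N \times [0,T)$. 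At a positive minimum $(x_0, \tilde x_0, t_0)$, the first order conditions force $F(x_0,t_0) - \tilde F(\tilde x_0,t_0)$ to be a common normal of both surfaces, the second order conditions assemble $\mathrm{Hess}_{x}\Phi$, $\mathrm{Hess}_{\tilde x}\Phi$ and the mixed block $\tilde\nabla_{\tilde j}\nabla_i \Phi = -\langle F_i, \tilde F_{\tilde j}\rangle$ into a positive semidefinite $2n \times 2n$ matrix, and the resulting inequalities on the aligned principal curvatures, combined with the explicit velocities $\tfrac{1}{H}\nu$ and $\tfrac{1}{\tilde H}\tilde\nu$, give $\partial_t \Phi \geq 0$ at $(x_0, \tilde x_0, t_0)$. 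Hence $\min \Phi(\cdot,\cdot,t)$ is non-decreasing in $t$, which simultaneously yields persistence of the containment $E_t \subset \tilde E_t$ and the monotonicity of $\mathrm{dist}(N_t, \tilde N_t) = \sqrt{2\,\min \Phi(\cdot,\cdot,t)}$.

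The main obstacle is precisely the tangent/touching case. The infinitesimal velocity comparison at $p$ only supplies a weak inequality, so the argument cannot rest solely on the pointwise curvature comparison but must extract an effective bound on $\tfrac{1}{H_N} - \tfrac{1}{H_{\tilde N}}$ from the second variation of $\Phi$ on the product manifold. The delicate step is pairing the mixed Hessian block cleanly against the pure blocks $g_N + d A_N$ and $\tilde g - d\tilde A$ to recover the correct comparison of mean curvatures; once this is done the argument parallels the standard avoidance proof for mean curvature flow, with the IMCF velocity $\tfrac{1}{H}\nu$ replacing $-H\nu$.
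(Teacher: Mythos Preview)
The paper does not actually prove this statement here: it is quoted from the author's earlier work (``proven in Section 3 of \cite{me}'') and stated without argument, so there is no in-paper proof to compare against. Your product-manifold distance argument is the standard route for avoidance principles and is correct in outline; at a minimizing pair, testing the positive-semidefinite Hessian of $\Phi$ on the diagonal vector $(v,v)$ gives $A_N \geq A_{\tilde N}$ in aligned frames, hence $H_N \geq H_{\tilde N}$ and $\partial_t \Phi_{\min} = -d\bigl(\tfrac{1}{H_N}-\tfrac{1}{H_{\tilde N}}\bigr) \geq 0$ via Hamilton's trick, yielding both containment and distance monotonicity at once.

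One quibble: the hypothesis $E_0 \subset \tilde E_0$ with both sets open does not preclude $N_0 \cap \tilde N_0 \neq \varnothing$, so your assertion of ``strict initial containment'' (and hence $t^\ast > 0$) is unwarranted. This is harmless, however, since the $\Phi$-argument handles the case $\min\Phi = 0$ directly, and your opening first-contact paragraph is in any case superseded by the $\Phi$-argument you give afterward; you may as well drop the first-contact setup and lead with $\Phi$.
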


This immediately controls the width in the $\hat{e}_{1}$ direction of $N_{t}$.

\begin{proposition}[Width Estimate] \thlabel{height_bound}
Let $F_{0}: \mathbb{S}^{n} \rightarrow \mathbb{R}^{n+1}$ be a $C^{\infty}$, $H>0$, rotationally symmetric embedding, and $F: \mathbb{S}^{n} \times [0,T_{\max}) \rightarrow \mathbb{R}^{n+1}$ the corresponding solution to \eqref{IMCF}. Then for $\tilde{u}(x,t)=\langle \vec{F}(x,t), e_{1} \rangle$,

\begin{eqnarray}
    |\tilde{u}(x,t)| &\leq& (\max_{N_{0}} |\vec{F}|) e^{\frac{t}{n}}. \label{hor} 
\end{eqnarray}
\end{proposition}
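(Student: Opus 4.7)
The plan is to deduce the bound from \thref{avoidance} by comparison with a round sphere centered at the origin, using the fact that spheres are explicit self-similar solutions of IMCF.

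First I would set $R_{0}=\max_{N_{0}}|\vec{F}|$ and let $\tilde{N}_{0}=\mathbb{S}_{R_{0}}(\vec{0})$ be the sphere of radius $R_{0}$ about the origin. By the choice of $R_{0}$, every point of $N_{0}$ lies in the closed ball $\overline{B_{R_{0}}(\vec{0})}$, so $E_{0}\subset \tilde{E}_{0}$. A direct computation from \eqref{IMCF} shows that the sphere of radius $R(t)$ centered at $\vec{0}$ is a solution of IMCF with $R'(t)=1/H=R(t)/n$, giving $R(t)=R_{0}e^{t/n}$; thus $\tilde{N}_{t}=\mathbb{S}_{R_{0}e^{t/n}}(\vec{0})$ is the corresponding solution and remains smooth and embedded for all $t\in[0,T)$.

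Next I would apply \thref{avoidance} to the pair $\{N_{t}\},\{\tilde{N}_{t}\}$. The conclusion $E_{t}\subset \tilde{E}_{t}$ gives $|\vec{F}(x,t)|\leq R_{0}e^{t/n}$ for every $x\in N_{t}$ and every $t\in[0,T)$. Since $\tilde{u}(x,t)=\langle \vec{F}(x,t),\hat{e}_{1}\rangle$ satisfies $|\tilde{u}(x,t)|\leq |\vec{F}(x,t)|$ by Cauchy--Schwarz with $|\hat{e}_{1}|=1$, the desired estimate
\begin{equation*}
|\tilde{u}(x,t)|\leq (\max_{N_{0}}|\vec{F}|)\,e^{t/n}
\end{equation*}
follows immediately.

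There is no real obstacle here: the only ingredients are the avoidance principle (already stated as \thref{avoidance}) and the explicit spherical solution of IMCF. The main thing to check is that the comparison sphere actually satisfies the hypotheses of \thref{avoidance}, namely that $\tilde{N}_{0}$ encloses $N_{0}$, which follows directly from the definition of $R_{0}=\max_{N_{0}}|\vec{F}|$.
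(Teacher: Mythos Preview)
Your proof is correct and follows exactly the same approach as the paper: enclose $N_{0}$ in the sphere of radius $R_{0}=\max_{N_{0}}|\vec{F}|$ about the origin, compare with the explicit spherical solution $R(t)=R_{0}e^{t/n}$ via \thref{avoidance}, and read off the bound on $|\tilde{u}|$ from $|\vec{F}|$. The only difference is that you spell out the Cauchy--Schwarz step and the ODE for $R(t)$, which the paper leaves implicit.
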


\begin{proof}

$N_{0}$ is enclosed by a sphere of radius $\rho_{0}=\max_{N_{0}} |\vec{F}|$, so comparing with the corresponding spherical solution $\rho(t)=(\max_{N_{0}} |\vec{F}|)e^{\frac{t}{n}}$ using the one-sided avoidance principle yields \eqref{hor}.
\end{proof}
We can also control the height $u$ using Hamilton's trick.
\begin{proposition}[Height Estimate] \thlabel{height_growth}
Let $F_{0}: \mathbb{S}^{n} \rightarrow \mathbb{R}^{n+1}$ be a $C^{\infty}$, $H>0$, rotationally symmetric embedding, and $F: \mathbb{S}^{n} \times [0,T_{\max}) \rightarrow \mathbb{R}^{n+1}$ the corresponding solution to \eqref{IMCF}. Then for $u(x,t)=\langle \vec{F}(x,t), w \rangle$,
 \begin{equation} \label{vert}
 u(x,t) \leq (\max_{N_{0}} u)e^{\frac{t}{n-1}}.
 \end{equation}
\end{proposition}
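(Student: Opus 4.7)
The plan is to bound $u_{\max}(t) := \max_{x \in N_t} u(x,t)$ via Hamilton's trick and the evolution equation \eqref{evolution4} from \thref{evolution_equations}. If $u_{\max}(0) = 0$ there is nothing to prove, so assume $u_{\max}(t) > 0$ for all $t$. Since $u$ is smooth and $N$ is closed, $u_{\max}$ is locally Lipschitz, and at any point of differentiability its derivative equals $\partial_t u(x^*,t)$ at a spatial maximizer $x^*$. Any such $x^*$ must satisfy $u(x^*,t) > 0$ (by continuity and the fact that $u_{\max}(t) > 0$), so $\hat{w}$ is smoothly defined at $F(x^*,t)$ and the calculus at $x^*$ applies.

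At such a maximizer $x^*$, the tangential projection of $\hat{w}$ to $N_t$ vanishes: indeed, if $\{\tau_i\}$ is a local tangent frame, then $\partial_i u = \langle \hat{w}, \tau_i \rangle$, so $\nabla^{N_t} u = 0$ forces $\hat{w}$ to be normal to $N_t$. Combined with $|\hat{w}| = |\nu| = 1$ and \thref{embed} (which gives $\langle \hat{w}, \nu \rangle > 0$ whenever $u > 0$), we conclude $\hat{w} = \nu$ at $x^*$. Consequently $v = \langle \hat{w}, \nu \rangle^{-1} = 1$ and $p = \langle \hat{w}, \nu \rangle u^{-1} = u^{-1}$ at $x^*$, and $H = (n-1)p + k = (n-1)u^{-1} + k$.

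Plugging these identities and $\Delta u(x^*,t) \leq 0$ into \eqref{evolution4}, Hamilton's trick yields
\begin{equation*}
\frac{d}{dt} u_{\max}(t) \;\leq\; \frac{2p}{H}u - \frac{(n-1)p^2}{H^2}v^2 u \;=\; \frac{2u}{(n-1)+ku} - \frac{(n-1)u}{\bigl((n-1)+ku\bigr)^2}.
\end{equation*}
Setting $a = (n-1) + ku$ this simplifies to $u\bigl((n-1) + 2ku\bigr)/a^2$, and the elementary identity $(n-1)\bigl((n-1) + 2ku\bigr) = a^2 - (ku)^2 \leq a^2$ gives
\begin{equation*}
\frac{d}{dt} u_{\max}(t) \;\leq\; \frac{u_{\max}(t)}{n-1}.
\end{equation*}
Integrating this differential inequality produces $u_{\max}(t) \leq u_{\max}(0)\, e^{t/(n-1)}$, which is \eqref{vert}.

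The only genuinely delicate point is the step $\hat{w} = \nu$ at a maximizer: one must separately handle the possibility that the maximizer degenerates (it cannot, since $u_{\max} > 0$) and confirm the correct sign of $\langle \hat{w}, \nu \rangle$, which is exactly what \thref{embed} supplies. Everything else reduces to the algebraic identity $(n-1)(n-1+2ku) \leq ((n-1)+ku)^2$, so no sign assumption on $k$ (which could be negative in a neck region) is needed.
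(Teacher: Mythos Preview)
Your proof is correct and follows the same core strategy as the paper: apply Hamilton's trick to $u_{\max}(t)$ and use that $\hat w=\nu$ (hence $v=1$, $p=u^{-1}$) at a spatial maximizer. The only difference is in how the time derivative is bounded. The paper works directly with $\partial_t u = \langle\nu,\hat w\rangle/H = 1/H$ at the maximizer and observes the geometric fact that $k\ge 0$ there (the generating graph has a local maximum), giving $H\ge (n-1)/u$ immediately. You instead route through the full evolution equation \eqref{evolution4}, absorb the $\Delta u\le 0$ term, and then use the algebraic identity $(n-1)\bigl((n-1)+2ku\bigr)=\bigl((n-1)+ku\bigr)^2-(ku)^2$ to reach the same inequality without ever invoking the sign of $k$. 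Your route is slightly longer but has the minor virtue of being insensitive to the sign of $k$; the paper's route is shorter because at the global maximum of $u$ that sign is free. One small comment: you cite \thref{embed} to fix the sign $\langle\hat w,\nu\rangle>0$, but at the \emph{global} maximum of $u$ this sign is automatic (the surface lies inside the cylinder of radius $u_{\max}$, so the outward normal must be $+\hat w$), which is also how the paper proceeds.
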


\begin{proof}
Consider the function $f: [0,T) \rightarrow \mathbb{R}$ defined by $f(t)= \max_{x \in \mathbb{S}^{n}} e^{-\frac{t}{n-1}}u(x,t)$. According to Hamilton's trick, c.f. Section 2.1 of \cite{carlos}, $f$ is a locally Lipschitz function of time, and where differentiable satisfies

\begin{equation*}
    f'(t_{0})= \partial_{t} f(x_{0},t_{0})
\end{equation*}
where $(x_{0},t_{0}) \in \mathbb{S}^{n} \times [0,T)$ is any point maximizing $f$ at the time $t_{0}$. $\partial_{t} u(x,t)$ is simply the $\hat{w}$-component of the velocity vector $\frac{1}{H} \nu$ in $\mathbb{R}^{n+1}$, and so

\begin{equation} \label{height_deriv}
    \partial_{t} f(x,t)= e^{-\frac{t}{n-1}} \frac{\langle \nu, w \rangle}{H} - \frac{1}{n-1} f.
\end{equation}
At $(x_{0},t_{0})$ we have $\langle \nu, w \rangle(x_{0},t_{0}) =1$ and $k(x_{0},t_{0}) \geq 0$. This means $H(x_{0},t_{0}) \geq (n-1) u(x_{0},t_{0})^{-1}$. Plugging this into \eqref{height_deriv} yields

\begin{equation*}
    \partial_{t} f(x_{0},t_{0}) \leq 0.
\end{equation*}
Therefore, $f'(t_{0}) \leq 0$ where differentiable. For times $t_{1} < t_{2}$ in $[0,T)$ we use the Fundamental Theorem of Calculus to write

\begin{equation*}
    f(t_{2}) = f(t_{1}) + \int_{t_{1}}^{t_{2}} f'(t) dt \leq f(t_{1}).
\end{equation*}
The estimate follows.
\end{proof}
As a corollary of this, we also obtain a lower bound on $H$ over the boundary of $B_{t}$. We also note that the height $u$ is minimized over $B_{t}$ at an interior point.

\begin{corollary} \thlabel{cap_bound}
Let $F_{0}: \mathbb{S}^{n} \rightarrow \mathbb{R}^{n+1}$ be a $C^{\infty}$, $H>0$, rotationally symmetric embedding, and $F: \mathbb{S}^{n} \times [0,T_{\max}) \rightarrow \mathbb{R}^{n+1}$ the corresponding solution to \eqref{IMCF}. Then $k(x,t) \geq 0$ when $x \in \partial B_{t}$. As a consequence,

\begin{equation} \label{cap_H}
    H|_{\partial B_{t}} \geq (n-1) \frac{e^{\frac{-t}{n-1}}}{\max_{N_{0}} u}.
\end{equation}
Furthermore, $\min_{x \in B_{t}} u(x,t) < u|_{\partial B_{t}}$.
\end{corollary}

\begin{proof}
Whenever $F_{t}(x) \in \mathbb{R}^{n+1} \setminus X_{1}$,
\begin{equation*}
    \nabla \langle e_{1}, \nu \rangle =  \hat{v}_{1} (\langle e_{1}, \nu \rangle) \hat{v}_{1} = \langle e_{1}, \nabla_{\hat{v}_{1}} \nu \rangle \hat{v}_{1} = k \langle w, v \rangle \hat{v}_{1},
\end{equation*}
where once again $\hat{v}_{1}$ is defined in \eqref{v_1}. Now, if $(x,t) \in \partial B_{t}= \partial C^{+}_{t} \cup \partial C^{-}_{t}$, then $\hat{v}_{1}=\hat{e}_{1}$ and the $\hat{v}_{1}$ component of $\nabla \langle e_{1}, \nu \rangle$ is non-negative, and so $k(x,t) \geq 0$. Then $H(x,t) \geq (n-1) p(x,t) = (n-1) u^{-1}(x,t)$ here, and since $u \leq e^{\frac{t}{n-1}} \max_{N_{0}} u$ from \eqref{vert} the first conclusion follows.

For the second part, once again for a small enough neighborhood $V$ of $\langle e_{1}, \nu \rangle$, $\langle e_{1}, \nu \rangle > 0$ on $V \setminus \overline{C^{-}_{t}}$ (resp. $<0$ over $C^{+}_{t}$). Taking an integral curve of $\hat{v}_{1}$ from $y \in C^{-}_{t}$ to $x \in V \setminus \overline{C^{-}_{t}}$ and using equation \eqref{height_grad} for $\nabla u$ yields the conclusion.
\end{proof}

\section{The Bridge Region}
We first consider the region $B \subset \mathbb{S}^{n} \times [0,T)$, as the geometry of this domain allows us to apply the non-cylindrical maximum principle. Many of the estimates derived in this section apply generally for any mean-convex, rotationally symmetric embedded sphere, but a crucial sharp bound on the quantity $v$ from \eqref{v}, which roughly measures how ``narrow" the necks are, only applies for admissible data. Once again, we must find a uniform-in-time bound on the flow speed over $B$. We begin by estimating the principal curvature $p$ of rotation.

\begin{theorem} [Rotational Curvature Estimates] \thlabel{rot}
Let $F_{0}: \mathbb{S}^{n} \rightarrow \mathbb{R}^{n+1}$ be a $C^{\infty}$, $H>0$, rotationally symmetric embedding, and $F: \mathbb{S}^{n} \times [0,T_{\max}) \rightarrow \mathbb{R}^{n+1}$ the corresponding solution to \eqref{IMCF}. Then the principal curvature $p=(uv)^{-1}$ obeys the estimates

\begin{equation} \label{rotational_curvature}
    e^{-\frac{t}{n-1}} \min_{\overline{B}_{0}} p \leq p (x,t) \leq e^{-\frac{t}{n-1}} \max_{\overline{B}_{0}} p 
\end{equation}
over $B= \text{Int}(\cup_{0 \leq t < T} B_{t} \times \{ t \})$. In particular, $\frac{\max_{\overline{B}_{t}} p}{\min_{\overline{B}_{t}} p}$ is a non-increasing function of time.
\end{theorem}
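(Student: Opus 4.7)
The plan is to apply the non-cylindrical maximum principle from \thref{noncyl} on the domain $L$ to the rescaled quantity $\tilde{q}=e^{-t/(n-1)}uv=e^{-t/(n-1)}/p$. Since $L$ meets the domain-geometry hypothesis of \thref{noncyl} by the result at the end of Section~3, the max principle delivers $\sup_L \tilde{q}\leq \sup_{\tilde{\partial}_P L}\tilde{q}$ as soon as $\tilde{q}$ is a subsolution of the appropriate linear operator. This directly translates into the lower bound on $p$, and a separate Hamilton-type argument applied to the ``waist'' $y_{\min}(t)$ will supply the upper bound.

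First I compute the evolution of $\tilde{q}$. Applying the product rule to $\Delta(uv)=u\Delta v+v\Delta u+2\langle\nabla u,\nabla v\rangle$ and substituting the formulas \eqref{evolution4} and \eqref{evolution5} from \thref{evolution_equations}, the cross terms involving $(n-1)p^2v^3u/H^2$ cancel and $2puv/H=2/H$, yielding
\begin{equation*}
\bigl(\partial_t-\tfrac{1}{H^2}\Delta\bigr)(uv)=\tfrac{2}{H}-\tfrac{|A|^2 uv}{H^2}-\tfrac{2u|\nabla v|^2}{H^2 v}-\tfrac{2\langle\nabla u,\nabla v\rangle}{H^2}.
\end{equation*}
Combining the two remaining gradient terms into $-\tfrac{2\langle\nabla v,\nabla(uv)\rangle}{H^2 v}$ and exploiting the Gauss identity $|A|^2=k^2+(n-1)p^2$, $H=k+(n-1)p$, one checks the algebraic identity $|A|^2-2pH=(k-p)^2-np^2=\tfrac{nk^2-H^2}{n-1}$. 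Rescaling by $e^{-t/(n-1)}$ and writing $\eta=-\tfrac{2\nabla v}{vH^2}$ then produces
\begin{equation*}
\bigl(\partial_t-\tfrac{1}{H^2}\Delta\bigr)\tilde{q}=-\tfrac{nk^2}{(n-1)H^2}\tilde{q}+\langle\eta,\nabla\tilde{q}\rangle.
\end{equation*}
The zero-order coefficient is non-positive, so $\tilde{q}$ is a subsolution. By \thref{noncyl}, $\sup_L\tilde{q}$ is attained on $\tilde{\partial}_P L=L_0\cup\bigcup_{t\in(0,T)}\partial L_t$. On $L_0$ we have $\tilde{q}=uv\leq \max_{\overline{L}_0}(uv)=1/\min_{\overline{L}_0}p$. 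On $\partial L_t$, \thref{cap_bound} identifies these points as local maxima of the generating graph, where $v=1$ and $\tilde{q}=e^{-t/(n-1)}u$; the height estimate \eqref{vert} gives $u\leq (\max_{N_0}u)e^{t/(n-1)}=(\max_{\overline{L}_0}u)e^{t/(n-1)}$ (noting $\max_{N_0}u=\max_{\overline{L}_0}u$ since $u$ is monotone along each cap), and $\max_{\overline{L}_0}u\leq\max_{\overline{L}_0}(uv)$. Hence $\tilde{q}\leq 1/\min_{\overline{L}_0}p$ on $L$, which rearranges to $p\geq e^{-t/(n-1)}\min_{\overline{L}_0}p$.

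For the upper bound I use Hamilton's trick on $m(t)=\min_{\overline{L}_t}y$. Because a point in $\partial L_t$ is a local maximum of $y$, the minimum of $y$ on $\overline{L}_t$ must occur at an interior local min of the generating graph, where $y'=0$ (so $v=1$) and $y''\geq 0$ (so $k=-y''\leq 0$ and $H=k+(n-1)/y\leq(n-1)/y$). At such a point the graph evolution $y_t=v/H=1/H$ gives $\partial_t y\geq y/(n-1)$, and Hamilton's trick yields $m(t)\geq m(0)e^{t/(n-1)}$. Since $p=1/(yv)\leq 1/y\leq 1/m(t)$ pointwise on $L_t$ and $1/m(0)=1/y_{\min}(0)=p|_{\text{waist}}\leq\max_{\overline{L}_0}p$, we conclude $p\leq e^{-t/(n-1)}\max_{\overline{L}_0}p$. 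The monotonicity of the ratio follows at once: applying the two bounds with $t_0\leq t_1$ in place of $0$ gives $\max_{\overline{L}_{t_1}}p/\min_{\overline{L}_{t_1}}p\leq\max_{\overline{L}_{t_0}}p/\min_{\overline{L}_{t_0}}p$. The main obstacle is the asymmetry of the evolution equation for $\tilde{q}$: the maximum principle on $\tilde{q}$ (or equivalently on $e^{t/(n-1)}p$) only produces one of the two bounds cleanly, since the zero-order coefficient has a definite sign; the companion bound requires the Hamilton's-trick analysis at the waist, where the extra information that minima of $y$ occur away from $\partial L_t$ enters crucially.
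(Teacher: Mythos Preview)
Your proof is correct and largely parallels the paper's own argument: both work with the same test function $e^{-t/(n-1)}uv$ on the domain $L$, invoke the non-cylindrical maximum principle, and control the reduced parabolic boundary via the height estimate together with the fact that $\partial L_t$ consists of local maxima of $u$. The main difference is in how the subsolution inequality is verified. The paper leaves the reaction terms in the form $(-|A|^2/H^2 - 1/(n-1) + 2p/H)f$ and then, using the formula $\nabla_i(uv)=-\delta_{i1}p^3\langle\nu,e_1\rangle v(p-k)$, splits into the two cases ``height-critical'' ($\langle\nu,e_1\rangle=0$) and ``umbilic'' ($k=p$) to check the sign at an interior extremum by hand. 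Your algebraic reduction of the zero-order coefficient to $-\tfrac{nk^2}{(n-1)H^2}$ bypasses this case analysis entirely and plugs directly into \thref{noncyl}; this is genuinely cleaner. As you correctly note, the price is that the sign-definite coefficient only delivers one inequality, so you supply the companion bound via Hamilton's trick on the waist height $m(t)=\min_{\overline{L}_t}y$. The paper's treatment of the minimum is in fact the same computation in disguise: it observes that the spatial minimum of $uv$ over $\overline{L}_t$ always occurs at the absolute minimum of $u$ (where $v=1$ and $k\le 0$), and there $\partial_t(uv)=\partial_t u=1/H\ge u/(n-1)$, which is exactly your Hamilton's-trick step. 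So the two proofs agree on the lower bound of $uv$ and differ only in the packaging of the upper bound.
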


\begin{proof}
Combining equations \eqref{evolution4} and \eqref{evolution5} yields the following for $p=(uv)^{-1}$:


\begin{eqnarray*}
    (\partial_{t} - \frac{1}{H^{2}} \Delta) p &=& u^{-1}(\frac{|A|^{2}}{H^{2}} v^{-1} - \frac{(n-1)p^{2}}{H^{2}} v^{2}) + \\ 
    & &  v^{-1} (-\frac{2p}{H}u^{-1} + \frac{(n-1)p^{2}}{H^{2}} v^{2} u^{-1} - 2\frac{u^{-3}}{H^{2}} |\nabla u|^{2}) - \\
    & &  2\frac{(uv)^{-2}}{H^{2}} \langle \nabla u, \nabla v \rangle \\
    &=& (\frac{|A|^{2}}{H^{2}} - \frac{2p}{H}) p + \frac{2}{H^{2}u} \langle \nabla p, \nabla u \rangle.
\end{eqnarray*}
The function $f(x,t)=e^{\frac{t}{n-1}} p(x,t)$ then satisfies

\begin{equation} \label{evolution_f}
    (\partial_{t} - \frac{1}{H^{2}} \Delta) f = (\frac{|A|^{2}}{H^{2}} - \frac{2p}{H} + \frac{1}{n-1}) f + \frac{2}{H^{2}u} \langle \nabla f, \nabla v \rangle.
\end{equation}
We have at a spacetime maximum or minimum $(x_{0},t_{0})$ of $f$ in $B$ that $\nabla p(x_{0},t_{0}) = 0$. According to the formula \eqref{p_grad} for $\nabla p$, critical points of $p$ are characterized by either $\langle \nu, e_{1} \rangle =0$ or $k=p$. We consider these cases separately.

\textit{Case I: $\langle \nu, e_{1} \rangle =0$}: If $(x_{0},t_{0})$ is a minimum of $p$, then over a sufficiently small neighborhood of $(x_{0},t_{0})$

\begin{equation*}
    u^{-1}(x_{0},t_{0})=p(x_{0},t_{0}) \leq p(x,t) \leq u^{-1}(x,t).
\end{equation*}
So $(x_{0},t_{0})$ is local maximum of the height function over $B_{t_{0}}$. This guarantees that $k \geq 0$ and hence $H \geq (n-1) p$ at this point, and since $1$ is an absolute minimum of the function $v$ we know $\partial_{t} v(x_{0},t_{0}) =0$. Altogether,

\begin{eqnarray*}
    \partial_{t} p(x_{0},t_{0}) &=& -\frac{1}{v^{2}(x_{0},t_{0})u(x_{0},t_{0})} \partial_{t} v(x_{0},t_{0}) - \frac{1}{v(x_{0},t_{0}) u^{-2}(x_{0},t_{0})}\partial_{t} u (x_{0},t_{0}) \nonumber \\
    &=& -\frac{p^{2}(x_{0},t_{0})}{H(x_{0},t_{0})} \geq -\frac{1}{n-1} p(x_{0},t_{0}).
\end{eqnarray*}
This implies $\partial_{t} f(x_{0},t_{0}) \geq 0$ at a minimum. On the other hand, the global maximum of $p$ on $B_{t_{0}}$ corresponds to the global minimum of $u$, which by \thref{cap_bound} this occurs at an interior point of $B_{t_{0}}$. At this point

\begin{equation*}
    \partial_{t} p(x_{0},t_{0}) = -\frac{p^{2}(x_{0},t_{0})}{H(x_{0},t_{0})} \leq -\frac{1}{n-1} p(x_{0},t_{0}),
\end{equation*}
where the inequality follows from the fact that $k(x_{0},t_{0}) \leq 0$. Therefore $\partial_{t} f(x_{0},t_{0}) \leq 0$ at a maximum.

\textit{Case II: $k=p$:} Since the maximum of $p$ on $B_{t_{0}}$ occurs at an interior minimum of $u$ which is covered by Case I, $(x_{0},t_{0})$ is a global minimum of $f$. $N_{t_{0}}$ is umbilic at this point, so $H^{2}=n|A|^{2}$ and each principal curvature equals $p$. Thus \eqref{evolution_f} becomes

\begin{equation*}
    (\partial_{t} - \frac{1}{H^{2}} \Delta) f(x_{0}, t_{0}) = (-\frac{1}{n} + \frac{1}{n-1}) f(x_{0},t_{0}) \geq 0.
\end{equation*}
 Altogether, $\partial_{t} f (x_{0},t_{0}) \leq 0$ at any spacetime maximum (resp. $\geq 0$ at any minimum) in $L$, and the non-cylindrical maximum principle yields

\begin{equation*}
   \inf_{\tilde{\partial}_{P} B} f \leq f(x,t) \leq \sup_{\tilde{\partial}_{P} B} f.
\end{equation*}
In fact, $p|_{\partial B_{t}} = u^{-1}|_{\partial B_{t}} \geq e^{\frac{-t}{n-1}} (\max_{B_{0}} u)^{-1}$ from \thref{height_growth}, and so $f|_{\partial B_{t}} \geq \min_{B_{0}} f$. Likewise, the maximum of $f$ at the time $t$ cannot occur on $\partial B_{t}$ by \thref{cap_bound}, so the supremum and infimum over the reduced parabolic boundary happen at $t=0$. Altogether,

\begin{equation*}
    e^{-\frac{t}{n-1}} \inf_{B_{0}} p \leq p(x,t) \leq e^{-\frac{t}{n-1}} \sup_{B_{0}} p, 
\end{equation*}
and so

\begin{equation}
    h(t)=\frac{\max_{\overline{B_{t}}} p}{\min_{\overline{B_{t}}} p}
\end{equation}
is a non-increasing function of time.

\end{proof}

\begin{remark}
The umbilicity of the $N_{t}$ at critical points of $p$ makes the reaction terms in its evolution equation much more tractable compared to the evolution equation under MCF for the same quantity.
\end{remark}

\thref{rot} provides a sharp interior gradient estimate, as one can show that the gradient-like quantity $v$ is bounded by the ratio of the highest and lowest values of $p$ at time $t$. If we use the admissibility condition and an integration trick from \cite{angenent}, we can obtain for admissible data that $v$ is specifically bounded away from $\sqrt{n}$ over $L \subset \mathbb{S}^{n} \times [0,T)$.
\begin{corollary}
Let $F_{0}: \mathbb{S}^{n} \rightarrow \mathbb{R}^{n+1}$ be an admissible rotationally symmetric embedding, and $F: \mathbb{S}^{n} \times [0,T_{\max}) \rightarrow \mathbb{R}^{n+1}$ the corresponding solution to \eqref{IMCF}. Then

\begin{equation} \label{v_est}
    \max_{\overline{B}} v < \sqrt{n}
\end{equation}
\end{corollary}
\begin{proof}
To prove this statement we consider two cases separately. Either $u(x,t) \geq n^{\frac{1}{2(n-1)}} \min_{B_{t}} u$ or $u(x,t) < n^{\frac{1}{2(n-1)}} \min_{B_{t}} u$.

\textit{Case I: $u(x,t) \geq n^{\frac{1}{2(n-1)}} \min_{B_{t}} u$:} We know $(\max_{B_{t}} p)^{-1}= \min_{B_{t}} u$. Then 

\begin{equation*}
    v(x,t) \leq \frac{\max_{B_{t}} uv}{u(x,t)} \leq \frac{1}{n^{\frac{1}{2(n-1)}}} \frac{\max_{B_{t}} p}{\min_{B_{t}} p} \leq n^{-\frac{1}{2(n-1)}} \frac{\max_{B_{0}} p}{\min_{B_{0}} p} < \sqrt{n},
\end{equation*}
where we used admissibility and that $\frac{\max_{B_{t}} p}{\min_{B_{t}} p}$ is a nonincreasing function of time.

\textit{Case II: $u(x,t) < n^{\frac{1}{2(n-1)}} \min_{B_{t}} u$:} Recall equations \eqref{w_grad} and \eqref{height_grad},

\begin{eqnarray*}
    \nabla u &=& - \langle e_{1}, \nu \rangle \hat{v}_{1}\\
    \nabla \langle w, \nu \rangle &=& - \langle e_{1}, \nu \rangle k \hat{v}_{1}.
\end{eqnarray*}
Relating these, we find the following equation for the gradient of $\ln(v(x,t))$,

\begin{equation}
    \nabla \ln (v) = -kv \nabla u.
\end{equation}
Consider the image point $F_{t}(x)$ on $N_{t}$ of $(x,t)$. If $\langle \nabla u, \hat{v}_{1} (x,t) \rangle < 0$, take the integral curve $\gamma$ of the vector field $\hat{v}_{1}$. Let $s_{0} >0$ be the first parameter value with $\langle \nabla u(\gamma(s)), \hat{v}_{1} \rangle =0$, and say $\gamma(s_{0})=F_{t}(y)$. Using $(n-1) p  + k > 0$, $\ln v(y,t) = 0$, and $\langle \nabla u(\gamma(s)), \hat{v}_{1} \rangle < 0$ for $s \in [0,s_{0})$, we find

\begin{eqnarray}
    -\ln v(x,t) &=& \int_{0}^{s_{0}} \langle \nabla \ln v (x,t), \hat{v}_{1} \rangle ds = \int_{0}^{s_{0}} -kv \langle \nabla u, \hat{v}_{1} \rangle ds \nonumber \\
    &>& \int_{0}^{s_{0}} \frac{(n-1)}{u} \langle \nabla u, \hat{v}_{1} \rangle ds = \int_{0}^{s_{0}} (n-1) \langle \nabla \ln u, \hat{v}_{1} \rangle ds \\
    &=& (n-1) \ln (\frac{u(y,t)}{u(x,t)}) \nonumber.
\end{eqnarray}
If $\langle \nabla u, \hat{v}_{1} (x,t) \rangle > 0$, we just take the corresponding $s_{0} < 0$ and the integral curvature $\gamma$ over $(s_{0},0]$ and obtain the same result. So 

\begin{equation*}
    v(x,t) \leq (\frac{u(x,t)}{u(y,t)})^{(n-1)},
\end{equation*}
for some critical point $(y,t)$ of $u$. In particular,

\begin{equation}
    v(x,t) \leq (\frac{u(x,t)}{\min_{B_{t}} u})^{n-1} < \sqrt{n}.
\end{equation}
\end{proof}
We are now ready to estimate $H^{-1}$ over the region $B$. \thref{cap_bound} ensures that $H$ is bounded below over $\partial B_{t}$ and hence the entire reduced parabolic boundary of $B$. Due to the positive term of evolution equation \eqref{evolution3} for $H^{-1}$, one seeks another well-behaved quantity to combine with the flow speed in order to use a maximum principle.

Equation \eqref{evolution5} suggests that $v$ is the most natural quantity to combine with the speed function, but due to an extra positive term one finds $(\partial_{t} - \frac{1}{H^{2}} \Delta) \frac{v}{H} \leq \frac{n-1}{H^{2}u^{2}} \frac{v}{H}$ at an interior maximum, meaning the RHS cannot be immediately controlled. In view of the estimate \eqref{v_est} on $v$, one can compensate for this term using the function $\varphi(r)= \frac{r}{1-\lambda r}$ from the proof of Theorem 3.1 in \cite{ecker2} (see also Proposition 5 in \cite{maria} and Theorem A.5 in \cite{choi}). The lower bound on $p$ from \thref{rot} will also be important in the proof.

The time has come. Execute Theorem 6.6.

\begin{theorem}[Speed Estimate over $B$] \thlabel{speed}
Let $F_{0}: \mathbb{S}^{n} \rightarrow \mathbb{R}^{n+1}$ be an admissible rotationally symmetric embedding, and $F: \mathbb{S}^{n} \times [0,T_{\max}) \rightarrow \mathbb{R}^{n+1}$ the corresponding solution to \eqref{IMCF}. If $T< \infty$ there is a constant $C=C(T, N_{0}, n) < \infty$ so that

\begin{equation}
    \sup_{B} \frac{1}{H} \leq C.
\end{equation}

\end{theorem}
\begin{proof}



Consider the function $\varphi(v)= \frac{v}{(1- \lambda v)}$ for $\lambda$ to be chosen later. From equation \eqref{evolution5}, one finds

\begin{eqnarray*}
    (\partial_{t} - \frac{1}{H^{2}} \Delta) \varphi(v) &=& - \frac{|A|^{2}}{H^{2}} \varphi'(v) v + \frac{(n-1)p^{2}}{H^{2}} \varphi'(v) v^{3} - (2 \frac{\varphi'(v)}{v} + \varphi''(v)) \frac{|\nabla v|^{2}}{H^{2}}.
\end{eqnarray*}
Define $g=H^{-1} \varphi(v)$. Using the relations

\begin{eqnarray*}
v \varphi'(v) - \varphi(v)&=& -\lambda |\varphi(v)|^{2}, \\ \varphi'(v) v^{2} &=& |\varphi(v)|^{2}, \\
2\frac{\varphi'(v)}{v} + \varphi''(v) &=& 2 \frac{|\varphi'(v)|^{2}}{\varphi(v)},
\end{eqnarray*}
we compute

\begin{eqnarray}
    (\partial - \frac{1}{H^{2}} \Delta) g &=& -|A|^{2} H^{-3} \varphi'(v) v + (n-1)p^{2} H^{-3} \varphi'(v) v^{3}  - (2 \frac{\varphi'(v)}{v} + \varphi''(v)) H^{-3} |\nabla v|^{2} \nonumber \\ & &
     + |A|^{2} H^{-3} \varphi(v)  - 2H^{-2} \langle \nabla \varphi(v), \nabla H^{-1} \rangle \nonumber \\
    &=&(-\lambda |A|^{2} + v(n-1)p^{2})H^{-1}g^{2}  - 2 \frac{|\varphi'(v)|^{2}}{\varphi(v)} H^{-3} |\nabla v|^{2} \label{evolution_g} \\ & &- 2H^{-2} \langle \nabla \varphi(v), \nabla H^{-1} \rangle \nonumber \\
    &=& (-\lambda |A|^{2} + v(n-1)p^{2})H^{-1}g^{2} - 2(H^{2}\varphi(v))^{-1} \langle \nabla \varphi(v), \nabla g \rangle. \nonumber
\end{eqnarray}
As $\max_{\overline{B}} v < \sqrt{n}$ for admissible $N_{0}$, let $\frac{1}{\sqrt{n}} < \lambda < \frac{1}{\max_{\overline{B}} v}$. Since the corresponding $\varphi(v)$ is bounded over $B$, $H=(n-1)p + k$ must be near zero when $g$ is large enough. By the lower bound on $p$ of \thref{rot}, $-k \rightarrow (n-1)p$ and therefore $|A|^{2} \rightarrow n(n-1) p^{2}$ as $H \rightarrow 0$. Thus $\lambda |A|^{2} \geq \sqrt{n} (n-1)p^{2}$ for sufficiently large $g$, and so once again in view of the bound on $v$ the first term in the last line of \eqref{evolution_g} will be non-positive when this happens.

Since this term is clearly bounded for small $g$, take $\tilde{g}= g - Ct$ for some constant $C=C(n,N_{0})$ chosen so that $(-\lambda |A|^{2} + v(n-1)p^{2})H^{-1}g^{2} - C$ is strictly non-positive. $\tilde{g}$ satisfies

\begin{equation*}
    (\partial_{t} - \frac{1}{H^{2}} \Delta) \tilde{g} \leq \langle \eta, \nabla \tilde{g} \rangle
\end{equation*}
 for $\eta = 2(H^{2}\varphi(v))^{-1} \nabla \varphi(v)$ over $B \subset \mathbb{S}^{n} \times [0,T)$. As mean curvature is bounded below by $(n-1) e^{-\frac{T}{n-1}} (\min_{B_{0}} u)^{-1}$ over $\tilde{\partial}_{P} B$ according to \thref{height_growth}, $\tilde{g} \leq \sup_{\tilde{\partial}_{P} B} \tilde{g} \leq C(N_{0},n) e^{\frac{T}{n-1}}$ by the non-cylindrical maximum principle. This bounds the growth of $H^{-1}= (\tilde{g} + Ct)(\varphi(v))^{-1}$ to linear plus exponential, i.e.

\begin{equation*}
    \sup_{B} H^{-1} \leq C_{1} e^{\frac{T}{n-1}} + C_{2} T,
\end{equation*}
for constants $C_{1}, C_{2}$.
\end{proof}

\begin{remark}
The second condition in \thref{admissible} is necessary to ensure $\sup_{B} v< \sqrt{n}$, allowing us to define $g$ in such a way that it is controlled using the non-cylindrical maximum principle. There is still a time-independent bound on $v$ over this region for non-admissible data, but not by $\sqrt{n}$. It is unclear whether $H^{-1}$ is bounded over the bridge for non-admissible data.
\end{remark}


\section{The Cap Region}
As a result of the previous section, $H^{-1}$ is uniformly controlled over $\partial_{P} B =\partial_{P} C^{+} \cup \partial_{P} C^{-}$ for admisible initial data. This means that we apply the first part of the non-cylindrical maximum principle in order to control this quantity over $C^{+}$ and $C^{-}$. The maximum principle used in this section applies for any smooth, $H>0$, rotationally symmetric embedded sphere, but it is only for admissible data that we can control the relevant quantity on the parabolic boundary.

Like in the last section, we require a positive, bounded quantity to combine with the flow speed in order to obtain a useful evolution equation. $\langle \nu, e_{1} \rangle$ is non-negative over the right cap $C^{+}_{t}$ (Respectively non-positive over $C^{-}_{t}$) according to \thref{subset}. This allows us to fix an appropriate point on the axis $X_{1}$ so that the support function of the flow surfaces with respect to this point is strictly positive over one of the caps.

\begin{definition}
Let $F_{0}: \mathbb{S}^{n} \rightarrow \mathbb{R}^{n+1}$ be a $C^{\infty}$, $H>0$, rotationally symmetric embedding, and $F: \mathbb{S}^{n} \times [0,T_{\max}) \rightarrow \mathbb{R}^{n+1}$ the corresponding solution to \eqref{IMCF}. For a fixed time interval $[0,T)$, $T < +\infty$, consider the point $x_{0} \in X_{1} \subset \mathbb{R}^{n+1}$ given by

\begin{equation*}
    x_{0}=(\max_{N_{0}} |\vec{F}| e^{\frac{T}{n}}, 0, \dots, 0).
\end{equation*}
The \textbf{right support function $\theta_{+}: \mathbb{S}^{n} \times [0,T) \rightarrow \mathbb{R}$} and \textbf{left support function $\theta_{-}: \mathbb{S}^{n} \times [0,T) \rightarrow \mathbb{R}$} are defined as 

\begin{eqnarray*}
    \theta_{+}(x,t)&=& \langle \vec{F}(x,t) + \vec{x}_{0}, \nu \rangle, \\
    \theta_{-}(x,t)&=& \langle \vec{F}(x,t) - \vec{x}_{0}, \nu \rangle.
\end{eqnarray*}
\end{definition}
This particular choice of $x_{0}$ ensures that $\theta_{\pm}$ remains positive over each respective cap, see Figure \ref{support}.

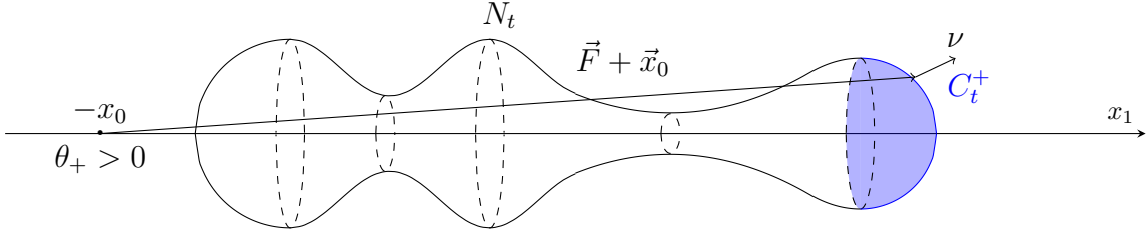
\begin{figure}

    \begin{center}
        \textbf{The Left and Right Support Functions}
        \vspace{0.3cm}
    \end{center}
    \centering
    \begin{tikzpicture}[xscale=2.5, yscale=2.5]
    \begin{axis}[ xlabel=\footnotesize{$x_{1}$}, y=0.5cm, x=0.5cm,
          xmax=6, xmin=-6, ymax=1.5, ymin=-1.2,
          axis lines=middle,
          restrict y to domain=-2:2,
           xtick = {0},
        ytick= {0},
        y axis line style= {draw= none},
        yticklabels= {0, $u_{\min}(0)$}
          enlargelimits]
        \addplot[ domain=-4:-3, smooth] {(1 - (x+3)*(x+3))^(0.5)};
        \addplot[ domain=3:3.8, smooth, blue, name path=g] {(0.8*0.8 - (x-3)*(x-3))^(0.5)};
        \addplot[ domain=-3:0, smooth] {0.3*cos(deg(3*(x+3))) + 0.7};
        \addplot[ domain=-2.1:-1.9, smooth, dashed] {0.4*(1-100*(x+2)^(2))^(0.5)};
        \addplot[ domain=-2.1:-1.9, smooth, dashed] {-0.4*(1-100*(x+2)^(2))^(0.5)};
        
         \addplot[ domain=0.9:1.1, smooth, dashed] {0.21*(1-100*(x-1)^(2))^(0.5)};
        \addplot[ domain=0.9:1.1, smooth, dashed] {-0.21*(1-100*(x-1)^(2))^(0.5)};
         \addplot[ domain=-2.85:-3.15, smooth, dashed] {(1-44.44*(x+3)^(2))^(0.5)};
           \addplot[ domain=-2.85:-3.15, smooth, dashed] {-(1-44.44*(x+3)^(2))^(0.5)};
        \addplot[ domain=-1.05:-0.75, smooth, dashed] {(1-44.44*(x+0.9)^(2))^(0.5)};
                \addplot[ domain=-1.05:-0.75, smooth, dashed] {-(1-44.44*(x+0.9)^(2))^(0.5)};
         \addplot[ domain=2.85:3, smooth, dashed, name path=f] {0.8*(1-44.44*(x-3)^(2))^(0.5)};
         \addplot[ domain=3:3.15, smooth, dashed] {0.8*(1-44.44*(x-3)^(2))^(0.5)};
             \addplot[ domain=2.85:3, smooth, dashed, name path=-f] {-0.8*(1-44.44*(x-3)^(2))^(0.5)};
         \addplot[ domain=3:3.15, smooth, dashed] {-0.8*(1-44.44*(x-3)^(2))^(0.5)};
         
      \addplot[blue, opacity=0.3] fill between[of =f and -f];
      
        \draw (-0.8,1) node[anchor=south]{$N_{t}$};
        \draw (3.8, 0.5) node[anchor=west]{$\color{blue} C_{t}^{+}$};
        \draw (-5,0) node{\Large{$\cdot$}};
        \draw (-5,0) node[anchor=south]{$-x_{0}$};
        \draw[->] (-5,0) -- (3.58,0.6);
        \draw[->] (3.58,0.6) -- (4,0.8);
        \draw (4,0.8) node[anchor=south]{$\nu$};
        \draw (0.5,0.8) node{$\vec{F}+ \vec{x}_{0}$};
        \draw (-5,0) node[anchor=north]{$\theta_{+} > 0$};
        \addplot[ domain=0:2.5, smooth] {0.2*(x-1)^(2) + 0.22};
        \addplot[ domain=0:2.5, smooth] {-(0.2*(x-1)^(2) + 0.22)};
         \addplot[ domain=2.5:3, smooth] {-0.55*(x-3)^(2) + 0.8};
          \addplot[ domain=2.5:3, smooth] {-(-0.55*(x-3)^(2) + 0.8)};
        \addplot[ domain=-4:-3, smooth] {-(1 - (x+3)*(x+3))^(0.5)};
        \addplot[ domain=3:3.8, smooth, blue, name path=-g] {-(0.8*0.8 - (x-3)*(x-3))^(0.5)};
        \addplot[ domain=-3:0, smooth] {-0.3*cos(deg(3*(x+3))) - 0.7};
          \addplot[blue, opacity=0.3] fill between[of =g and -g];
        \end{axis}
    \end{tikzpicture}

    \caption{By picking a point on the axis away from the flow surfaces for $t \in [0,T)$, we ensure the support function is positive over the right cap.}
    \label{support}
\end{figure}

\begin{proposition}
For any $t \in [0,T)$, the functions $\theta_{+}$ and $\theta_{-}$ are positive over $\overline{C^{+}} \subset \mathbb{S}^{n} \times [0,T)$ and $\overline{C^{-}} \subset \mathbb{S}^{n} \times [0,T)$, respectively.
\end{proposition}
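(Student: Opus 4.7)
The plan is to decompose both the position vector and the unit normal into their components in the $e_1$-direction and the cylindrical radial direction $\hat{w}$, and then verify positivity of $\theta_{\pm}$ term by term. By rotational symmetry, at any point $(x,t)$ with $u(x,t)>0$ we may write
\begin{equation*}
\vec{F}(x,t) = \tilde{u}(x,t)\,e_1 + u(x,t)\,\hat{w}, \qquad \nu(x,t) = \nu_1(x,t)\,e_1 + \nu_w(x,t)\,\hat{w},
\end{equation*}
where $\nu_1 = \langle \nu, e_1\rangle$ and $\nu_w = \langle \nu, \hat{w}\rangle$, while at points on the axis of rotation one has $u=0$ and $\nu = \pm e_1$.

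The first step is to use the width estimate \eqref{hor} of \thref{height_bound}: since $t<T$, we have $|\tilde{u}(x,t)| \leq (\max_{N_0}|\vec{F}|)\, e^{t/n} < (\max_{N_0}|\vec{F}|)\, e^{T/n}$, so that for the chosen $x_0$,
\begin{equation*}
\tilde{u}(x,t) + (\max_{N_0}|\vec{F}|)\, e^{T/n} > 0 \qquad \text{and} \qquad \tilde{u}(x,t) - (\max_{N_0}|\vec{F}|)\, e^{T/n} < 0
\end{equation*}
throughout $\mathbb{S}^{n} \times [0,T)$. Substituting the decompositions yields
\begin{equation*}
\theta_{+}(x,t) = \big(\tilde{u}(x,t) + (\max_{N_0}|\vec{F}|)\,e^{T/n}\big)\,\nu_1(x,t) + u(x,t)\,\nu_w(x,t),
\end{equation*}
with the analogous expression, whose first factor is strictly \emph{negative}, for $\theta_{-}$.

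The second step is case analysis on $\overline{C^{+}}$. By \thref{subset}, $\nu_1 \geq 0$ on $\overline{C^{+}}$; by \thref{embed}, $u \geq 0$ everywhere and $\nu_w > 0$ whenever $u > 0$. If the point lies off the axis of rotation, then $u > 0$, so $\nu_w > 0$, and the second term $u \nu_w$ is strictly positive while the first is nonnegative, giving $\theta_+ > 0$. If instead the point lies on the axis, then $u = 0$ and the surface meets the axis orthogonally by $C^2$ rotational symmetry, which on the right cap component forces $\nu = +e_1$, hence $\nu_1 = 1$; the first term is then strictly positive while the second vanishes. Either way $\theta_+(x,t) > 0$. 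The argument for $\theta_-$ on $\overline{C^{-}}$ is symmetric: there $\nu_1 \leq 0$ and the first coefficient is strictly negative, so their product is nonnegative; off the axis $u\nu_w > 0$ again, and on the axis $\nu = -e_1$, making the first term strictly positive.

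The only delicate point is what happens at the parabolic boundary $\partial C^{\pm}_t$, where $\nu_1 = 0$; there the first summand drops out entirely, so positivity has to come from $u\nu_w$. But $\partial C^{\pm}_t$ lies off the axis of rotation (as these boundary curves are precisely the critical circles of the generating graph away from its endpoints), so $u > 0$ and \thref{embed} supplies $\nu_w > 0$. This is the step that genuinely relies on the embeddedness preservation result and on having chosen $\vec{x}_0$ strictly beyond the time-$T$ spatial envelope of the flow.
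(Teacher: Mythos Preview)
Your proof is correct and follows essentially the same approach as the paper: decompose $\vec{F}\pm\vec{x}_0$ and $\nu$ into their $e_1$ and $\hat{w}$ components, use the width estimate \eqref{hor} to make the $e_1$-coefficient strictly signed, and then split into the off-axis case (where \thref{embed} gives $u\nu_w>0$) and the on-axis case (where $\nu=\pm e_1$). Your final paragraph on $\partial C^{\pm}_t$ is a subcase of the off-axis case and so is redundant, though not incorrect.
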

\begin{proof}
We prove this for right cap first. The shifted $x_{1}$ coordinate

\begin{equation*}
    \tilde{u}(x,t)= \langle \vec{F}(x,t) + \vec{x}_{0}, e_{1} \rangle
\end{equation*}
must be strictly positive over $\mathbb{S}^{n} \times [0,T)$ in view of the width estimate from \thref{height_bound}. On the other hand, $\langle e_{1}, \nu \rangle$ is non-negative over $C^{+}$, and can only equal $0$ where $u$ and $\langle w, \nu \rangle$ are each positive. Also, $\langle e_{1}, \nu \rangle=1$ where $u=0$ on $C^{+}$. Recalling equation \eqref{supp} for the support function, we have

\begin{equation*}
       \theta_{+}(x,t) = \tilde{u} \langle \nu, e_{1} \rangle + u \langle w, \nu \rangle \geq c > 0
\end{equation*}
over $C^{+}$. The first term is also non-negative for $\theta^{-}$ over $C^{-}$, and can only equal $0$ when the second term is bounded below.
\end{proof}

We now consider the functions $f_{\pm}(x,t)= (\theta_{\pm} H)^{-1}$. $f_{+}$ and $f_{-}$ are well-defined and positive over $C^{+}$ and $C^{-}$, respectively. According to \thref{evolution_equations}, $\theta_{\pm}$ and $H^{-1}$ satisfy the same evolution equation. Thus the maximum principle applied to $f_{+}$ over $C^{+}$ (Resp. $f_{-}$ over $C^{-}$) yields an upper bound on $H^{-1}$.

\begin{theorem}[Speed Estimate over $C^{\pm}$] \thlabel{cap_speed}
Let $F_{0}: \mathbb{S}^{n} \rightarrow \mathbb{R}^{n+1}$ be a $C^{\infty}$, $H>0$, rotationally symmetric embedding, and $F: \mathbb{S}^{n} \times [0,T_{\max}) \rightarrow \mathbb{R}^{n+1}$ the corresponding solution to \eqref{IMCF}. For the functions $f_{+}: C^{+} \rightarrow \mathbb{R}$ and $f_{-}: C^{-} \rightarrow \mathbb{R}$ defined by $f_{+}(x,t)= (\theta_{+}(x,t) H(x,t))^{-1}$ and $f_{-}(x,t)=(\theta_{-}(x,t) H(x,t))^{-1}$,

\begin{equation*}
    \sup_{C^{\pm}} f_{\pm} = \sup_{\partial_{P} C^{\pm}} f_{\pm}. 
\end{equation*}
Furthermore, if $F_{0}$ is admissible then $\sup_{\partial_{P} C^{+}} f_{+} \leq c_{1}$ and $\sup_{\partial_{P} C^{-}} f_{-} \leq c_{2}$ for some constants $c_{1}=c_{1}(N_{0},T)$ and $c_{2}=c_{2}(N_{0},T)$. In this case, if $T< +\infty$ there is some constant $C=C(N_{0},T)$ so that

\begin{equation}
    \sup_{C^{+} \cup C^{-}} \frac{1}{H} \leq C.
\end{equation}
\end{theorem}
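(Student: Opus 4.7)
The plan is to apply the non-cylindrical maximum principle \thref{noncyl} directly to $f_\pm$. The key structural observation is that, by \thref{evolution_equations}, both $H^{-1}$ and the support functions $\theta_\pm = \langle \vec{F} \pm \vec{x}_0, \nu\rangle$ satisfy the \emph{same} linear parabolic equation with reaction coefficient $|A|^{2}/H^{2}$. A short computation of $(\partial_t - \tfrac{1}{H^2}\Delta)$ applied to the ratio $H^{-1}/\theta_\pm$ shows that the reactive terms cancel exactly, leaving the pure transport equation
\[
\left(\partial_t - \frac{1}{H^2}\Delta\right) f_\pm = \left\langle \frac{2\nabla \theta_\pm}{H^2\,\theta_\pm},\ \nabla f_\pm\right\rangle.
\]
Since the preceding proposition guarantees $\theta_\pm > 0$ on $\overline{C^\pm}$, the drift vector field is smooth on $C^\pm$, and the first part of \thref{noncyl} applies with no additional domain-geometry hypothesis, yielding $\sup_{C^\pm} f_\pm \leq \sup_{\partial_P C^\pm} f_\pm$. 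The reverse inequality is automatic from $\partial_P C^\pm \subset \overline{C^\pm}$ and continuity, establishing the equality of suprema.

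The remaining task is to bound $f_\pm$ over $\partial_P C^\pm$ for admissible data. I split the parabolic boundary into the initial slice $\overline{C^\pm_0}$ and the lateral portion $\{(x,t):\,t>0,\,x\in\partial C^\pm_t\}$. On the initial slice, compactness of $N_0$ and strict positivity of $\theta_\pm H$ bound $f_\pm$ by a constant depending only on $N_0$. On the lateral portion, the defining condition $\langle \nu,\hat{e}_{1}\rangle=0$ combined with \thref{embed} forces $\nu=\hat{w}$ and hence $v=1$, so $\theta_+ = u$ there (and analogously for $\theta_-$). Then the cap-boundary mean curvature bound $H \geq (n-1)e^{-t/(n-1)}/\max_{N_0}u$ from \thref{cap_bound}, multiplied against the lower bound $u = 1/p \geq e^{t/(n-1)}/\max_{\overline{L}_0}p$ coming from \thref{rot} specialised to $v=1$, produces a uniform \emph{time-independent} positive lower bound on $\theta_\pm H$ depending only on $N_0$, and therefore an upper bound on $f_\pm$ on the lateral boundary.

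Combining these two estimates gives $\sup_{C^\pm} f_\pm \leq c_\pm(N_0,T)$. To convert this to a bound on $H^{-1}$ itself, note $\theta_\pm \leq |\vec{F}| + |\vec{x}_0| \leq 2(\max_{N_0}|\vec{F}|)e^{T/n}$ on $[0,T)$ by the width estimate \thref{height_bound}, whence $H^{-1} = f_\pm \theta_\pm \leq C(N_0,T)$ on $C^+\cup C^-$ as desired. The only non-routine step in the argument is the cancellation of reactive terms in the evolution equation for the ratio $f_\pm$; all other ingredients are direct invocations of estimates from earlier sections, and I do not anticipate any serious obstacle beyond organising them carefully.
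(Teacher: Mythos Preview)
The overall strategy---compute that $f_\pm$ satisfies a pure transport equation and apply the first part of \thref{noncyl}---matches the paper's proof exactly, and your evolution equation is equivalent to (indeed, a cleaner rewriting of) the one the paper derives.

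The difference lies in how you bound $f_\pm$ on the parabolic boundary. The paper simply observes that $\partial_P C^\pm \setminus \{t=0\} \subset \partial_P L \subset \overline{L}$ and invokes \thref{speed} to bound $H^{-1}$ uniformly on $\overline{L}$; combined with the uniform lower bound on $\theta_\pm$ over $\overline{C^\pm}$, this controls $f_\pm$ on \emph{all} of $\partial_P C^\pm$. Your approach instead bounds $\theta_\pm H = uH$ directly on each spatial boundary $\partial C^\pm_t$ using \thref{cap_bound} and \thref{rot}. This is a nice, more elementary observation that in fact does not use admissibility at all (and can be sharpened to $uH\ge n-1$, simply from $k\ge 0$ and $p=1/u$ at a local maximum of $u$).

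However, this creates a subtle gap. Your decomposition ``initial slice $\cup$ lateral portion $\{x\in\partial C^\pm_t\}$'' is precisely the \emph{reduced} parabolic boundary $\tilde{\partial}_P C^\pm$, not the full $\partial_P C^\pm$ required by the first part of \thref{noncyl}. Section~3 is devoted to exactly this distinction: it establishes that $L$ satisfies the extra domain hypothesis allowing passage to $\tilde\partial_P$, but makes no such claim for $C^\pm$. If the cap boundary jumps (which the paper's analysis permits), there are points of $\partial_P C^\pm$ lying in $L_{t_0}$ rather than on any $\partial C^\pm_t$, where $\nu\neq\hat w$ and your lateral estimate does not apply. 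The paper's route via \thref{speed} handles these extra points automatically, since they lie in $\overline{L}$. Your proof is easily repaired by invoking \thref{speed} for precisely those points---but then admissibility re-enters, and the argument essentially collapses to the paper's.
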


\begin{proof}
We present the proof for $f_{+}$, finding the evolution equation first. From equations \eqref{evolution2} and \eqref{evolution_extra} of \thref{evolution_equations}, one can compute

\begin{eqnarray*}
    (\partial_{t} - \frac{1}{H^{2}} \Delta) f_{+} &=& -\frac{2}{H^{2}} f_{+}^{-1} |\nabla f_{+}|^{2} - \frac{2}{H^{3}} \langle \nabla H, \nabla f_{+} \rangle.
\end{eqnarray*}
Calling $\eta= -\frac{2}{H^{2}} f_{+}^{-1} \nabla f_{+} - \frac{2}{H^{3}} \nabla H$, the maximum principle implies

\begin{equation*}
    \sup_{C^{+}} f_{+} \leq \max_{\partial_{P}C^{+}} f_{+}.
\end{equation*}

For the second part of the theorem, we have that $\theta_{+}$ is uniformly bounded away from $0$ over $C^{+}$, and as $\partial_{P} C^{+} \subset \partial_{P} B$, $\sup_{\partial_{P} C^{+}} H^{-1} \leq C(T,N_{0})$ due to \thref{speed}. This yields $\sup_{\partial_{P} C^{+}} f_{+} \leq C(T,N_{0})$, and in turn

\begin{equation*}
    \sup_{C^{+}} H^{-1} \leq C(T,N_{0}).
\end{equation*}
The proof is the same for $C^{-}$.
\end{proof}

\section{Global Existence and Convergence for Admissible Data}
Huisken and Ilmanen show in \cite{huisken2} that as long as the flow speed remains bounded near a time $T$, one may continue the flow past this time. Thus \thref{speed} establishes that the solution to IMCF starting from an admissible initial surface exists for all time.

\begin{corollary}[Global Existence and Convergence for Admissible Data] \thlabel{key_theorem}
Let $F_{0}: \mathbb{S}^{n} \rightarrow \mathbb{R}^{n+1}$ be an admissible rotationally symmetric embedding, and $F: \mathbb{S}^{n} \times [0,T_{\max}) \rightarrow \mathbb{R}^{n+1}$ the corresponding solution to \eqref{IMCF}. Then $T_{\max} = + \infty$. 
\end{corollary}

\begin{proof}
Take $T < + \infty$, and consider the solution to \eqref{IMCF} over $\mathbb{S}^{n} \times [0,T)$. We have from \thref{speed} that $\sup_{B} \frac{1}{H} \leq C_{1}(T,N_{0})$ and from \thref{cap_speed} that $\sup_{C^{+} \cup C^{-}} \frac{1}{H} \leq C_{2}(T,N_{0})$, so altogether $\frac{1}{H} \leq \max\{ C_{1}, C_{2} \} $ over $\mathbb{S}^{n} \times [0,T)$. According to Corollary 2.3 in \cite{huisken2}, we obtain a smooth, $H>0$ limit surface $N_{T}$ at the time $T$, and hence by parabolicity of \eqref{IMCF} there exists a solution in short time starting from $N_{T}$. Conclude by continuation that $T_{\max} = + \infty$.
\end{proof}

We can obtain a stronger profile of the flow using \cite{me}.

\begin{proof}[Proof of \thref{long_time}]
Since $N_{t}$ exists for all time and remains embedded under the flow, Theorem 4 from \cite{me} reveals $N_{t}$ is star-shaped by the time $t_{*}=n \text{log}(R^{-1} \text{diam} (N_{0}))$, where $R$ is the inradius of $N_{0}$. Theorem 0.1 in \cite{claus} then implies $C^{2}$ convergence to spheres for some choice of parametrizations $\tilde{F}_{t}$ of $\tilde{N_{t}}$, and Theorem 0.1 in \cite{Urbas} upgrades the strength of convergence to $C^{\infty}$.
\end{proof}

\section{Applications}
In this section, we discuss applications of a long-time solution to \eqref{IMCF}. One of these applications is a proof of the Minkowski inequality for certain non-convex domains, and the key to this is a monotonicity formula along IMCF first noted in \cite{Guan2009TheQI}. For the convenience of the reader, we briefly compute this formula here.

\begin{proof}[Proof of \thref{minkowski}]
Using the evolution equation \eqref{evolution3} for $H$ under IMCF and the variation formula $\partial_{t} d \mu = \langle \partial_{t} \vec{F}, H \nu \rangle d \mu= d \mu$ for the measure, we compute

\begin{eqnarray*}
    \partial_{t} \int_{N_{t}} H d \mu &=&\int_{N_{t}} (\frac{1}{H^{2}} \Delta H - 2 \frac{|\nabla H|^{2}}{H^{3}} - \frac{|A|^{2}}{H^{2}} H) d\mu + \int_{N_{t}} H d\mu \\
    &=& \int_{N_{t}} (1- \frac{|A|^{2}}{H^{2}}) H d \mu \leq \int_{N_{t}} \frac{n-1}{n} H d\mu,
\end{eqnarray*}
where the inequality follows from $|A|^{2} \geq \frac{1}{n} H^{2}$ and is strict unless each $N_{t}$ is umbilic. From this, we find

\begin{eqnarray}
    \partial_{t} (|N_{t}|^{\frac{1-n}{n}} \int_{N_{t}} H d \mu) &\leq& \frac{1-n}{n} |N_{t}|^{\frac{1-n}{n}} \int_{N_{t}} H d \mu + |N_{t}|^{\frac{1-n}{n}} \int_{N_{t}} \frac{n-1}{n} H d\mu \nonumber \\
    &=& 0.
\end{eqnarray}
Suppose $N_{0}$ admits a long-time, embedded solution to \eqref{IMCF}. $|N_{t}|^{\frac{1-n}{n}}\int_{N_{t}} H d\mu$ is monotone decreasing under IMCF and invariant under scaling $N_{t} \rightarrow \lambda N_{t}$. Evaluating over the $t = +\infty$ limit of $e^{-\frac{t}{n}} N_{t}$ yields

\begin{equation*}
    |N_{t}|^{\frac{1-n}{n}}\int_{N_{t}} H d\mu \geq \frac{n}{r}|\partial B_{r}(0)|^{\frac{1}{n}} = n |\partial B_{1}(0)|^{\frac{1}{n}},
\end{equation*}
and equality is achieved only when $N_{0}$ is umbilic and hence a round sphere.
\end{proof}
Next, we will prove \thref{embeddedness}, which establishes a relationship between solutions of IMCF and minimal disks. This connection is based on Theorem 1 and Corollary 1 from \cite{Meeks1982TheEO} (their original version is formulated for more general domains, so we refine their hypothesis here for our application):

\begin{theorem}[\cite{Meeks1982TheEO}, Theorem 1] \thlabel{embedded_existence}
Let $E_{0} \subset \mathbb{R}^{3}$ be a bounded open domain with smooth, mean-convex boundary, and $\gamma \subset \partial E_{0}$ a Jordan curve. Then there is an embedded disk $D \subset E_{0}$ with boundary $\gamma$ which minimizes area among all immersed disks in $E_{0}$ with the same boundary.
\end{theorem}

\begin{theorem}[\cite{Meeks1982TheEO}, Corollary 1] \thlabel{finiteness}
Let $E_{0} \subset \mathbb{R}^{3}$ be a bounded open domain with smooth, mean-convex boundary, and $\gamma \subset \partial E_{0}$ a Jordan curve of class $C^{4,\alpha}$. For any $k \in \mathbb{R}$, there are only finitely many stable minimally immersed disks in $E_{0}$ with areas less than $k$ that are bounded by $\gamma$.
\end{theorem}
These theorems apply to the immersed minimal disks bounded by $\gamma$ which lie within the mean-convex domain $E_{0}$. Minimal disks bounded by $\gamma$ need not lie in such a domain-- see chapter 5 of \cite{harvie2021thesis} for an example-- meaning that in general the above statements do not hold for all of the minimal disks in $\mathbb{R}^{3}$ of $\gamma \subset \partial E_{0}$. However, in certain special cases, such as when $E_{0}$ is convex, all minimal surfaces bounded on $\partial E_{0}$ lie within it (this is a consequence of the convex hull property for minimal surfaces, see \cite{white2016lectures}). Here, we show that this convexity assumption may be significantly weakened thanks to IMCF and the comparison principle.
\begin{theorem} \thlabel{minimal_comparison}
Let $E \subset \mathbb{R}^{3}$ be a bounded, open set with $\partial E$ a $C^{2}$, $H>0$ connected hypersurface, and $\tilde{E}$ its convex hull. Suppose there exists a family of bounded, open domains $\{ E_{t} \}_{0 \leq t < T}$ in $\mathbb{R}^{3}$ with the following properties.

\begin{enumerate}
    \item $E_{0}=E$ and $\overline{E_{t_{1}}} \subset E_{t_{2}}$ for $t_{1} < t_{2}$.
     \item $\tilde{E} \setminus E \subset \cup_{0 \leq t < T} \partial E_{t}$.
    \item $\partial E_{t}$ is an embedded $C^{2}$ hypersurface with $H>0$ for each $t \in [0,T)$.
\end{enumerate}
Then for any Jordan curve $\gamma \subset \partial E$ and any immersed minimal disk $D$ with boundary $\gamma$, we have $D \subset E$.
\end{theorem}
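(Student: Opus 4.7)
The plan is to use the sweep-out $\{\partial E_t\}_{0\le t<T}$ as a family of mean-convex barriers, combining a first-touch argument with the strong maximum principle for surfaces of prescribed mean curvature. First, by the classical convex hull property for minimal surfaces (obtained by testing the minimal surface equation against restrictions of affine functions to $D$), any compact $C^2$ immersed minimal disk $D$ with $\partial D = \gamma \subset \overline{E} \subset \tilde{E}$ automatically satisfies $D \subset \tilde{E}$.

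Assume for contradiction that $D \not\subset \overline{E}$. Property~2 together with the strict nesting $\overline{E_{t_1}} \subset E_{t_2}$ in Property~1 gives $\tilde{E} \subset \bigcup_{t \in [0,T)} E_t$, since for each $p \in \tilde{E}\setminus E$ one identifies a (unique) $t(p)\in [0,T)$ with $p \in \partial E_{t(p)}$ and hence $p \in E_s$ for every $s > t(p)$. By compactness of $D$ applied to this increasing open cover, we may define
\begin{equation*}
t_0 := \inf\{\, t \in [0,T) : D \subset \overline{E_t}\,\}.
\end{equation*}
A monotonicity argument shows $t_0 \in [0, T)$ and $D \subset \overline{E_{t_0}}$, and the contradiction hypothesis forces $t_0 > 0$. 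Using compactness of $D$ and $\partial E_{t_0}$ together with the strict nesting, one sees that $D \cap \partial E_{t_0} \ne \emptyset$: otherwise $D$ would be contained in $\overline{E_s}$ for some $s < t_0$, contradicting the definition of $t_0$. Fix a contact point $p_0 \in D \cap \partial E_{t_0}$. Since Property~1 with $t_0 > 0$ forces $\gamma \subset \overline{E_0} \subset E_{t_0}$, the point $p_0$ cannot lie on $\gamma = \partial D$ and is therefore an interior point of the immersed disk.

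At $p_0$ the disk $D$ lies locally inside $\overline{E_{t_0}}$ and touches $\partial E_{t_0}$ tangentially. Orient both surfaces by the outward unit normal $\nu$ of $\partial E_{t_0}$; with respect to $\nu$ the mean curvature of $\partial E_{t_0}$ is strictly positive while that of $D$ vanishes, and $D$ lies on the side into which the mean curvature vector of $\partial E_{t_0}$ points. Representing both surfaces as graphs over their common tangent plane at $p_0$, the strong maximum principle for the quasilinear uniformly elliptic mean-curvature operator forces the two graphs to agree in a neighbourhood of the origin, contradicting $H_{\partial E_{t_0}}(p_0) > 0 = H_D(p_0)$.

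The principal obstacle is the rigorous extraction of an interior contact point in the middle step: the hypotheses furnish a strictly nested family of mean-convex surfaces whose boundaries cover $\tilde{E}\setminus E$, but not a $C^0$ foliation in the classical sense, so the monotonicity and continuity properties needed to attain the infimum defining $t_0$ and to produce a genuine tangential touching must be argued directly from Properties~1 and~2 rather than by invoking a foliation theorem. Once the interior contact point is in hand, the concluding step is a textbook comparison of a minimal surface against a strictly mean-convex barrier.
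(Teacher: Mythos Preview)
Your argument is correct and follows essentially the same barrier/first-touch strategy as the paper: establish $D\subset\tilde E$ by the convex hull property, define the first time $t_0$ at which the nested family swallows $D$, extract an interior contact point $p_0\in D\cap\partial E_{t_0}$, and then compare curvatures to contradict minimality. The paper carries out the step $\bigcap_{t>t_0}E_t=\overline{E_{t_0}}$ explicitly from Properties~1--2 (exactly the ``principal obstacle'' you flag), and finishes with the pointwise inequality $\lambda_i'(p_0)\ge\lambda_i(p_0)$ on principal curvatures rather than the strong maximum principle, but this is only a cosmetic difference.

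One small wrinkle: you assume for contradiction that $D\not\subset\overline E$, which yields $t_0>0$ and hence cleanly guarantees the contact point is interior (since $\gamma\subset\overline{E_0}\subset E_{t_0}$). But this literally only proves $D\subset\overline E$, whereas the statement asserts $D\subset E$. The remaining case---an interior point of $D$ lying on $\partial E_0$ while $D\subset\overline{E_0}$---is dispatched by the identical tangency/comparison argument applied at $t=0$, so the fix is immediate; just note it.
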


\begin{proof}
Suppose $D \not \subset E$. Since $D \subset \tilde{E}$, define

\begin{equation*}
    t_{0}= \inf \{ t \in [0,T) | D \subset E_{t}\}.
\end{equation*} 

Property (1) implies that $\overline{E_{t_{0}}} \subset \cap_{t_{0} < t \leq T} E_{t}$, and in fact Property (2) yields equality. Indeed, if $x \in (\cap_{t_{0} < t \leq T} E_{t}) \setminus \overline{E_{t_{0}}}$, then $x \not \in \partial E_{t_{1}}$ for any $t_{1} \in (t_{0},T)$ because $x \in E_{t}$ for $t_{0} < t < t_{1}$ and $E_{t} \cap \partial E_{t_{1}} = \varnothing$ for these $t$. But $x \not \in \partial E_{t_{0}}$ either, contradicting property (2). Since $D \subset E_{t}$ for each $t \in (t_{0},T)$, we have $D \subset \overline{E_{t_{0}}}$.

 Next we claim $D \cap \partial E_{t_{0}} \neq \varnothing$. If $t_{0}=0$ and $D \not \subset E=E_{0}$, then by definition $D \cap \partial E_{t_{0}} \neq \varnothing$. Otherwise, if $D \subset E_{t_{0}}$ one could pick $\delta >0$ small enough so that $D \subset \{ x \in E_{t_{0}} | \text{dist} (x, \partial E_{t_{0}}) > \delta \}$ (such a $\delta$ exists by closedness of $D \cup \gamma$) and $t_{1}<t_{0}$ large enough so that $\{ x \in E_{t_{0}} | \text{dist} (x, \partial E_{t_{0}}) > \delta \} \subset E_{t_{1}}$, again by Property (2). This would contradict the definition of $t_{0}$, so conclude $D \subset \overline{E}_{t_{0}}$ with $D \cap \partial E_{t_{0}} \neq \varnothing$.
 \begin{figure}
 
 \begin{center}
     \textbf{Comparison with $E_{t_{0}}$}
 \end{center}
 \vspace{0.3cm}
     \centering
  \begin{tikzpicture}[xscale=1.8,yscale=1.8]
    \begin{axis}[ y=0.5cm, x=0.5cm,
          xmax=14, xmin=-4, ymax=4.2, ymin=-4.2,
          axis lines=middle,
          restrict y to domain=-7:20,
           xtick = {0},
        ytick= {0},
        yticklabels= {0, $u_{\min}(0)$},
        axis line style={draw=none},
          enlargelimits]
\addplot[domain=-2:0, smooth, thick, name path=f, draw] {(4-x*x)^(0.5)};
\addplot[domain=0:1.5, smooth, thick, name path=g] {0.7*cos(deg(1.047*x + 1.047*6)) +1.3};
\addplot[domain=1.5:4.5, smooth, thick, name path=h] {0.7*cos(deg(1.047*x + 1.047*6)) +1.3};
\addplot[domain=4.5:6, smooth, thick, name path=i] {0.7*cos(deg(1.047*x + 1.047*6)) +1.3};
\addplot[ domain=6:9, smooth, thick, name path=j] {(4 - (x-6)*(x-6))^(0.5)}; 



\draw(3,0) node {$E_{0}$};



\addplot[domain=2.8:3.2, smooth, dashed] {0.6*(1-(25*(x-3)*(x-3)))^(0.5)};
\addplot[domain=2.8:3.2, smooth, dashed] {-0.6*(1-(25*(x-3)*(x-3)))^(0.5)};

\addplot[domain=-2:0, smooth, thick, name path=-f] {-(4-x*x)^(0.5)};
\addplot[domain=0:1.5, smooth, thick, name path=-g] {-0.7*cos(deg(1.047*x + 1.047*6)) -1.3};
\addplot[domain=1.5:4.5, smooth, thick, name path=-h] {-0.7*cos(deg(1.047*x + 1.047*6)) -1.3};
\addplot[domain=4.5:6, smooth, thick, name path=-i] {-0.7*cos(deg(1.047*x + 1.047*6)) -1.3};
\addplot[ domain=6:9, smooth, thick, name path=-j] {-(4 - (x-6)*(x-6))^(0.5)}; 

\addplot [
      thick,
     fill=gray!,opacity=0.4
    ] fill between[of=-f and f];

\addplot [
      thick,
     fill=gray!,opacity=0.4
    ] fill between[of=-g and g];
    
\addplot [
      thick,
     fill=gray!,opacity=0.4
    ] fill between[of=-h and h];
    
\addplot [
      thick,
     fill=gray!,opacity=0.4
    ] fill between[of=-i and i];

\addplot [
      thick,
     fill=gray!,opacity=0.4
    ] fill between[of=-j and j];

\addplot[domain=-3:1.5, smooth, thick, name path=-u] {-(9-x*x)^(0.5)};
\addplot[domain=-3:1.5, smooth, thick, name path=u] {(9-x*x)^(0.5)};
\addplot[domain=4.5:9, smooth, thick, name path=-v] {-(9-(x-6)*(x-6))^(0.5)};
\addplot[domain=4.5:9, smooth, thick, name path=v] {(9-(x-6)*(x-6))^(0.5)};
\addplot[domain=1.5:4.5, smooth, thick, name path=k]{0.2*(x-3)*(x-3) + 2.15};
\addplot[domain=1.5:4.5, smooth, thick, name path=-k]{-0.2*(x-3)*(x-3) - 2.15};

\addplot[domain=-0.4:0.4, smooth, dashed]{3*(1-6.25*x*x)^(0.5)};
\addplot[domain=-0.4:0.4, smooth, dashed]{-3*(1-6.25*x*x)^(0.5)};

\addplot [
      thick,
     fill=gray!,opacity=0.25
    ] fill between[of=u and -u];
    
\addplot [
      thick,
     fill=gray!,opacity=0.25
    ] fill between[of=v and -v];
    
\addplot [
      thick,
     fill=gray!,opacity=0.25
    ] fill between[of=k and -k];

\draw(3,1.5) node{\large{$E_{t_{0}}$}};

\addplot[domain=4:5, smooth, thick, name path=o]{1+ 0.1*(x-4)^(2)};

\addplot[domain=5:6.8, smooth, thick, name path=p]{1+ 0.1*(x-4)^(2)};

\addplot[domain=4:6.8, smooth, thick, dashed]{1.784- 0.1*(x-6.8)^(2)};

\draw(6,1) node{\large{$\gamma$}};

\draw(6.8,2.5) node{\large{\color{blue}{$D$}}};

\addplot[domain=4:5, smooth, name path=n]{2.8- 1.8*(x-5)^(2)};

\addplot[domain=5:6.8, smooth, name path=m]{2.8- 0.3*(x-5)^(2)};

\addplot [
      thick,
     fill=blue,opacity=0.2
    ] fill between[of=o and n];
    
\addplot [
      thick,
     fill=blue,opacity=0.2
    ] fill between[of=p and m];
    
\addplot[domain=3:7, smooth, thick, name path=y]{3.2+ 0.4*(x-5)};

\addplot[domain=3.5:7.5, smooth, thick, name path=z]{2.7+ 0.4*(x-5.5)};

\addplot[domain=3.5:7.5, smooth, thick]{2.7+ 0.4*(x-5.5)};

\addplot[domain=3:3.5, smooth, thick]{2.4- (x-3)};

\addplot[domain=7:7.5, smooth, thick]{4- (x-7)};

\addplot [
      thick,
     fill=red,opacity=0.2
    ] fill between[of=y and z];

\draw(4.8,2.9) node{$x$};

\draw(4.5,3.9) node{\color{red}{$T_{x} D = T_{x} E_{t_{0}}$}};



        \end{axis}
    \end{tikzpicture}
    
  \caption{For a minimal disk $D \not \subset E_{0}$, $H_{D}(x) \geq H_{E_{t_{0}}}(x) >0$ by the comparison principle, yielding a contradiction.}
     \label{exterior_comparison}
 \end{figure}
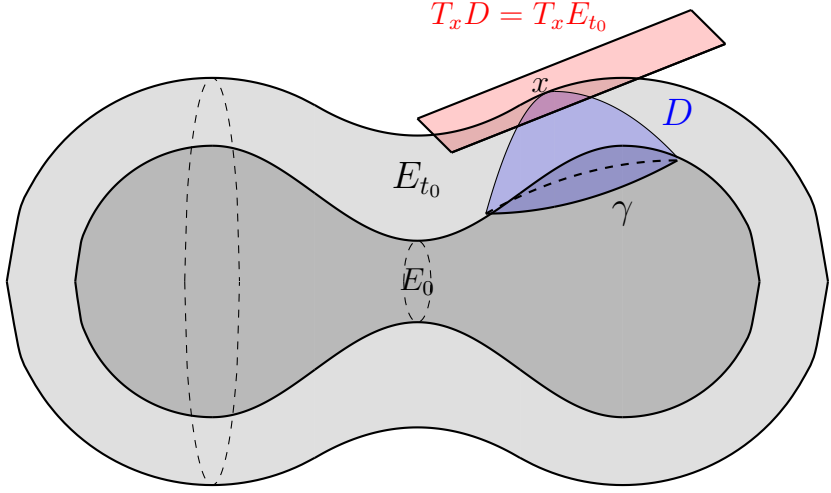
 
 To prove the statement, we utilize a comparison principle. For any $x \in \partial E_{t_{0}} \cap D$ the tangent planes $T_{x}D$ and $T_{x} (\partial E_{t_{0}})$ are parallel, since if not $D \setminus \overline{E_{t_{0}}}$ would be nonempty. Calling the principal curvatures of $\partial E_{t_{0}}$ and $D$ at $x$ $\{\lambda_{i}\}_{1 \leq i \leq n}$ and $\{ \lambda_{i}' \}_{1 \leq i \leq n}$ respectively, we must have
 
 \begin{equation*}
     \lambda_{i}' \geq \lambda_{i}, \hspace{1cm} 1 \leq i \leq n,
 \end{equation*}
 in view of the inclusion $D \subset \overline{E_{t_{0}}}$, see Figure \ref{exterior_comparison}. Property (3) would then yield $H > 0$ at $x \in D$, and this contradicts the minimality assumption. Conclude then that $D \subset E$.
\end{proof}

Suppose that the boundary $N_{0}=\partial E_{0}$ of a smooth, mean-convex domain $E_{0} \subset \mathbb{R}^{n+1}$ admits a solution $\{ N_{t} \}_{0 \leq t < T_{\max}}$ to IMCF that exists forever. The flow surfaces $N_{t}$ may still fail to foliate $\mathbb{R}^{n+1} \setminus E_{0}$, such as in the two spheres example mentioned in Section 3. In that example, though, $N_{t}$ ceases to be embedded in finite time. A solution of IMCF will indeed foliate its image whenever each $N_{t}$ is embedded, and the corresponding domains $E_{t}$ that $N_{t}$ bound allow us to apply \thref{minimal_comparison}.

\begin{corollary} \thlabel{enclosed}
Let $E_{0} \subset \mathbb{R}^{3}$ be a bounded, open domain with  $\partial E_{0}$ be a $C^{\infty}$, $H>0$ connected hypersurface. Suppose the Inverse Mean Curvature evolution $\{ N_{t} \}_{0 \leq t < T}$ of $N_{0}=\partial E_{0}$ satisfies $T_{\max}=+\infty$ and $N_{t}$ is embedded for each $t \in [0,+\infty)$. Then for any Jordan curve $\gamma \subset \overline{E}_{0}$ and any stable immersed minimal disk $D$ with $\partial D= \gamma$, $D \subset E_{0}$.
\end{corollary}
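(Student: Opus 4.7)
The plan is to apply \thref{minimal_comparison} with $E_t$ chosen to be the bounded open domain with $\partial E_t = N_t$ coming from the IMCF evolution of $N_0 = \partial E$. The hypotheses of the corollary supply global existence and embeddedness of the $N_t$, so I need to verify each of the three properties required by \thref{minimal_comparison} and then invoke that theorem directly.

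First I would check the nesting property $E_0 = E$ and $\overline{E_{t_1}} \subset E_{t_2}$ for $t_1 < t_2$. The flow $F_t$ moves in the outward normal direction with speed $1/H > 0$, so each $N_t$ is a connected embedded hypersurface, and $E_t$ is well-defined since $N_t = \partial E_t$ splits $\mathbb{R}^3$ into a bounded and an unbounded component. The strict outward motion combined with preservation of embeddedness gives the monotonicity of the $E_t$. Property (3), namely that $\partial E_t$ is a $C^2$ embedded hypersurface with $H > 0$, follows from smoothness of the flow together with the standard fact that $H > 0$ is preserved under IMCF (the evolution equation \eqref{evolution3} for $H^{-1}$ is a strict maximum principle inequality away from blow-up, and blow-up is ruled out by the global existence assumption $T_{\max} = + \infty$).

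The main step is property (2): the flow surfaces must cover $\tilde{E} \setminus E$, where $\tilde{E}$ is the convex hull of $E$. This is exactly where Theorem 4 of \cite{me} enters: an embedded global solution of IMCF foliates $\mathbb{R}^{n+1} \setminus \overline{E_0}$. In particular, for every $x \in \mathbb{R}^3 \setminus \overline{E}$ there is a unique $t(x) \in (0,\infty)$ with $x \in N_{t(x)} = \partial E_{t(x)}$. Since $\tilde{E} \setminus E \subset \mathbb{R}^3 \setminus E$ and the part lying in $\partial E$ is already taken care of by $t = 0$, we get $\tilde{E} \setminus E \subset \bigcup_{0 \le t < \infty} \partial E_t$, which is property (2) with $T = +\infty$. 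With all three properties verified, \thref{minimal_comparison} applied to any closed embedded curve $\gamma \subset \overline{E}$ and any $C^2$ immersed minimal disk $D$ with $\partial D = \gamma$ yields $D \subset E$.

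For the last sentence of the corollary, I would simply invoke two earlier results to supply the hypotheses on $N_0$. If $\partial E$ is star-shaped, Gerhardt's theorem in \cite{claus} gives a global smooth solution which remains star-shaped (hence embedded) for all $t \ge 0$, so $T_{\max} = +\infty$ and each $N_t$ is embedded. If $\partial E$ is admissibly rotationally symmetric, \thref{long_time} gives exactly the same conclusion: $T_{\max} = +\infty$ and $N_t$ is embedded for all $t \in [0,\infty)$. The main body of the corollary then applies and yields $D \subset E$. I do not anticipate technical obstacles beyond carefully citing Theorem 4 of \cite{me} to produce the foliation in property (2); the rest is bookkeeping about the domains $E_t$ and the preservation of the hypotheses of \thref{minimal_comparison}.
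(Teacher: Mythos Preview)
Your proposal is correct and follows essentially the same approach as the paper: define $E_t$ via the IMCF surfaces, invoke the foliation results from \cite{me} (the paper cites Theorems 3 and 4 there) to verify the three hypotheses of \thref{minimal_comparison}, and then appeal to \cite{claus} and \thref{long_time} for the star-shaped and admissible cases respectively. The only cosmetic difference is that you spell out properties (1) and (3) directly from the flow, whereas the paper attributes the nesting $\overline{E}_{t_1}\subset E_{t_2}$ to the same theorems in \cite{me}.
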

\begin{proof}
According to Theorems 3 and Theorem 4 from \cite{me}, if the flow surfaces $N_{t}=F_{t}(N)$ of the solution to IMCF are embedded, they foliate $U= \mathbb{R}^{n+1} \setminus E_{0}$. Also by these theorems, the domains $E_{t}$ with $N_{t}=\partial E_{t}$ satisfy $\overline{E}_{t_{1}} \subset E_{t_{2}}$ for $t_{1} < t_{2}$ and therefore meet all criteria of \thref{minimal_comparison}. Conclude then that for any immersed minimal surface $D$ with $\partial D \subset \overline{E}_{0}$, we have $D \subset E_{0}$.
\end{proof}

\begin{proof}[Proof of \thref{embeddedness}]
According to the regularity result of \cite{osserman}, the least-area disk $D$ spanning any Jordan curve $\gamma \subset \mathbb{R}^{3}$ is immersed, and so \thref{enclosed} implies that $D \subset E_{0}$. This $D$ also minimizes area within $E_{0}$, so it must correspond to the least-area immersion in $E_{0}$ guaranteed by \thref{embedded_existence} and is therefore embedded. Furthermore, all immersed minimal disks bounded by $\gamma$ are contained within $E_{0}$, and so the second part of the theorem follows from Corollary 1 of \cite{Meeks1982TheEO}.
\end{proof}
\appendix
\section{Appendices}

\subsection{Non-Cylindrical Maximum Principle}
We recall \thref{noncyl}:

\begin{theorem}
Let $F: N^{n} \times [0,T) \rightarrow \mathbb{R}^{n+1}$ be a solution of the Inverse Mean Curvature Flow \eqref{IMCF} over a closed manifold $N$. For a domain $U \subset N \times [0,T)$ and $f \in C^{2,1}(U) \cap C(\overline{U})$, suppose for a smooth vector field $\eta$ over $U$ we have

\begin{equation*}
    (\partial_{t} - \frac{1}{H^{2}} \Delta) f \leq \langle \eta, \nabla f \rangle.
\end{equation*}
(Resp. $\geq$ at a minimum) Here $\Delta$ and $\nabla$ are the Laplacian and gradient operators over $N_{t}$, respectively. Then

\begin{equation*}
    \sup_{U} f \leq \sup_{\partial_{P} U} f
\end{equation*}
(Resp. $\inf_{U} f \geq \inf_{\partial_{P} U} f$). Furthermore, suppose that $f$ has a positive supremum over $U$ and that each $(x_{0},t_{0}) \in \partial_{P} U \setminus \tilde{\partial}_P U$ is a limit point of $U \cap \{ t < t_{0} \}$. Then
\begin{equation} \label{reduced}
    \sup_{U} f \leq \sup_{\tilde{\partial}_{P} U} f
\end{equation}
(Resp. $\inf_{U} f \geq \inf_{\tilde{\partial}_{P} U} f$ for a positive minimum).
\end{theorem}

\textit{ Proof of \thref{noncyl}}: For the first part, we follow the proof in the Appendix of \cite{maria3}. For a given smooth vector field $\eta$ over $U$ we have by hypothesis

\begin{equation*}
    (\partial_{t} - \frac{1}{H^{2}} \Delta) f \leq \langle \eta, \nabla f \rangle.
\end{equation*}

We argue by contradiction: define the function $\tilde{f}(x,t)=f(x,t)- \epsilon t$ for some $\epsilon >0$. Then
\begin{equation*}
    \partial_{t} \tilde{f}= \partial_{t} f- \epsilon, \hspace{0.5cm}
    \partial_{i} \tilde{f} = \partial_{i} f, \hspace{0.5cm} \partial_{ij} \tilde{f} = \partial_{ij} f.
\end{equation*}

The operator over $\tilde{f}$ must then obey

\begin{equation} \label{principle}
    (\partial_{t} - \frac{1}{H^{2}} \Delta - \eta \cdot \nabla) \tilde{f} < 0.
\end{equation}
On the other hand, at any interior maximum $(x_{0},t_{0}) \in U$ of $\tilde{f}$, the criteria for a local maximum dictate that at $(x_{0},t_{0})$

\begin{equation*}
    \partial_{t} \tilde{f} \geq 0, \hspace{0.5cm} \partial_{i} \tilde{f}=0, \hspace{0.5cm} \partial_{ij} \tilde{f} \leq 0,
\end{equation*}
where the last inequality is in the operator-theoretic sense for the symmetric matrix $\partial_{ij} \tilde{f}$. Writing 

\begin{equation*}
    \Delta \tilde{f}= g^{ij} (\partial_{ij} \tilde{f} - \Gamma^{k}_{ij} \partial_{k} \tilde{f}), \hspace{0.5cm} \nabla \tilde{f} = g^{ij} \partial_{j} \tilde{f} \partial_{i} \vec{F}
\end{equation*}
in view of the positivity of $g_{ij}$, we see $\Delta \tilde{f} \leq 0$ and $\nabla f=0$. Hence 

\begin{equation*}
    (\partial_{t} - \frac{1}{H^{2}} \Delta - \eta \cdot \nabla) \tilde{f}(x_{0},t_{0}) \geq 0,
\end{equation*}
contradicting \eqref{principle}. So $\tilde{f}$ has no interior maximum and thus
\begin{equation*}
   \sup_{U} f - \epsilon T \leq  \sup_{U} \tilde{f} \leq \sup_{\partial_{P} U} f.
\end{equation*}
Then $\sup_{U} f \leq \sup_{\partial_{P} U} f + \epsilon T$. For $T< \infty$, letting $\epsilon \rightarrow 0$ yields the result. To prove the statement for the infimum, take $\tilde{f}=f- \epsilon t$, $\epsilon >0$, and repeat this argument for a minimum.

For the second part, we show $\sup_{U} f \leq \sup_{\tilde{\partial}_{P} U} f$ if $\sup_{U} f > 0$. Define $Z= \partial_{P} U \setminus \tilde{\partial}_{P} U$, and for each $t \in [0,T)$ let $Z_{t}= Z \cap \{ t \}$ be the cross sections of $Z$. We argue by contradiction: suppose \eqref{reduced} does not hold. Then the maximum of $f$ does not occur on $\partial_{\tilde{P}} U$ nor does it occur at an interior point of $U$, so it must occur on the set $Z$. Call $Z_{\max}$ the union of $Z_{t}$'s on which the maximum is achieved, and let $t_{*}>0$ be the first time at which $\sup_{U} f$ is achieved on $Z_{\max}$. Pick $\beta > 0$ such that

\begin{equation*}
    \sup_{U} f > \beta >  \sup_{\partial_{P} U \setminus Z_{\max}} f,
\end{equation*}
and define

\begin{equation*}
    Y_{\beta} = \{ (x,t) \in U| f(x,t) > \beta \}.
\end{equation*}

$\overline{Y_{\beta}}$ must intersect $Z_{t_{*}}$. Consider $(x_{0},t_{*}) \in Z_{t_{*}} \cap \overline{Y_{\beta}}$. From the additional assumption in the proposition there is a sequence of points $(x_{n},t_{n}) \in U$ with $t_{n} < t_{*}$ converging to $(x_{0},t_{*})$. By continuity of $f$, $f(x_{n},t_{n}) > \beta$ for large enough $n$. This means that the set

\begin{equation}
    X_{\beta}=\{ t < t_{*} \} \cap Y_{\beta}
\end{equation}
is nonempty and open. By openness, we pick a time $t_{1} < t_{*}$ so that the set $X_{\beta} \cap \{ t \leq t_{1} \} \neq \varnothing$. 
\begin{figure}
\begin{center}
    \textbf{The Cutoff in Time}
    \vspace{0.3cm}
\end{center}
\centering
\begin{tikzpicture}[ xscale=1.7,yscale=1.7,xshift=-2cm]
\begin{axis}     [ 
ylabel={\Large{$t$}},
      xlabel={\Large{$N$}},
          xmax=5, xmin=-1, ymax=1.4, ymin=0.3,
          restrict y to domain=-2:1.8,
           ytick = {0.2, 0.45, 0.55, 0.625, 1.5},
       yticklabels = {$0$, $t_{1}$, $t_{2}$, $t^{*}$ ,$T$},
        xtick= {-3},
          enlargelimits]

\draw(2,0.9) node{\Large{$U$}};

\draw(-0.15,0.625) node{\Large{$\cdot$}};

\draw (-0.15,0.5) node{$X_{\beta}$};

\addplot [domain= -0.5:0.2, smooth, dashed, name path=y] {1.8*(x+0.15)^(2) + 0.4};

\addplot [domain= -0.3:0, smooth, dashed, name path=w ]{0.45};
\addplot [domain= -1:4.5, smooth, opacity=0, name path=u ]{0.55};
\addplot [domain= -1:4.5, smooth, name path=-u ]{0};

\addplot [domain= -0.5:0.2, smooth, name path=z] {0.625};




\draw (-0.1,0.625) node[anchor=south]{\small{$Z_{t_{*}}$}};
\addplot [domain=-1:-0.5, smooth, thick, name path=f] {2.5*(x+1)^(2)};
\addplot [domain=-0.5:0.25, smooth, dashed, name path=g] {0.625};
\addplot [domain=0.25:0.8, smooth, thick, name path=h] {3*(x-0.25)^(2)+0.625};
\addplot [domain=0.8:1.8, smooth, thick, name path=i] {1.5325};
\addplot [domain=1.8:2.2, smooth, thick, name path=j] {-2*(x-1.8)^(2)+1.5325};
\addplot [domain=2.2:3, smooth, dashed, name path=k] {1.2125};
\addplot[domain=3:3.3, smooth, thick, name path=l] {-(x-3)+1.2125};

\draw (2.8,1.2125) node[anchor=south]{$\partial_{P} U \setminus \tilde{\partial_{P}} U$};

\addplot[domain=3.3:3.8, smooth, dashed, name path=m] {0.9125};
\addplot[domain=3.8:4.5, smooth, thick, name path=n] {1.091*(4.5-x)^(0.5)};

\addplot[domain=-1:-0.5, smooth, thick, name path=-f] {0};
\addplot[domain=-0.5:0.25, smooth, thick, name path=-g] {0};
\addplot[domain=0.25:0.8, smooth, thick, name path=-h] {0};
\addplot [domain=0.8:1.8, smooth, thick, name path=-i] {0};
\addplot [domain=1.8:2.2, smooth, thick, name path=-j] {0};
\addplot [domain=2.2:3, smooth, thick, name path=-k] {0};
\addplot[domain=3:3.3, smooth, thick, name path=-l] {0};
\addplot[domain=3.3:3.8, smooth, thick, name path=-m] {0};
\addplot[domain=3.8:4.5, smooth, thick, name path=-n] {0};

\addplot [
      thick,
     fill=gray!,opacity=0.4
    ] fill between[of=-f and f];

\addplot [
      thick,
     fill=gray!,opacity=0.4
    ] fill between[of=-g and g];

\addplot [
      thick,
     fill=gray!,opacity=0.4
    ] fill between[of=-h and h];

\addplot [
      thick,
     fill=gray!,opacity=0.4
    ] fill between[of=-i and i];

\addplot [
      thick,
     fill=gray!,opacity=0.4
    ] fill between[of=-j and j];

\addplot [
      thick,
     fill=gray!,opacity=0.4
    ] fill between[of=-k and k];

\addplot [
      thick,
     fill=gray!,opacity=0.4
    ] fill between[of=-l and l];

\addplot [
      thick,
     fill=gray!,opacity=0.4
    ] fill between[of=-m and m];

\addplot [
      thick,
     fill=gray!,opacity=0.4
    ] fill between[of=-n and n];

\addplot [
      thick,
     fill=white,opacity=0.5
    ] fill between[of=-u and u];

\draw[->] (0.5,0.45) -> (0.1,0.45);
\draw (0.5,0.45) node[anchor=west]{$(\phi f) > \beta > \sup_{\partial_{P} U \setminus Z_{\max}} f$};

\draw (2,0.7) node{$\phi f \equiv 0$};
\end{axis}
    \end{tikzpicture}
\caption{The cutoff function $\phi$ is $1$ for times less than $t_{1}$ and $0$ for times greater than $t_{2}$. This guarantees that the supremum of $\phi f$ occurs at an interior point of $X_{\beta}'$.}
    \label{cutoff}
\end{figure}
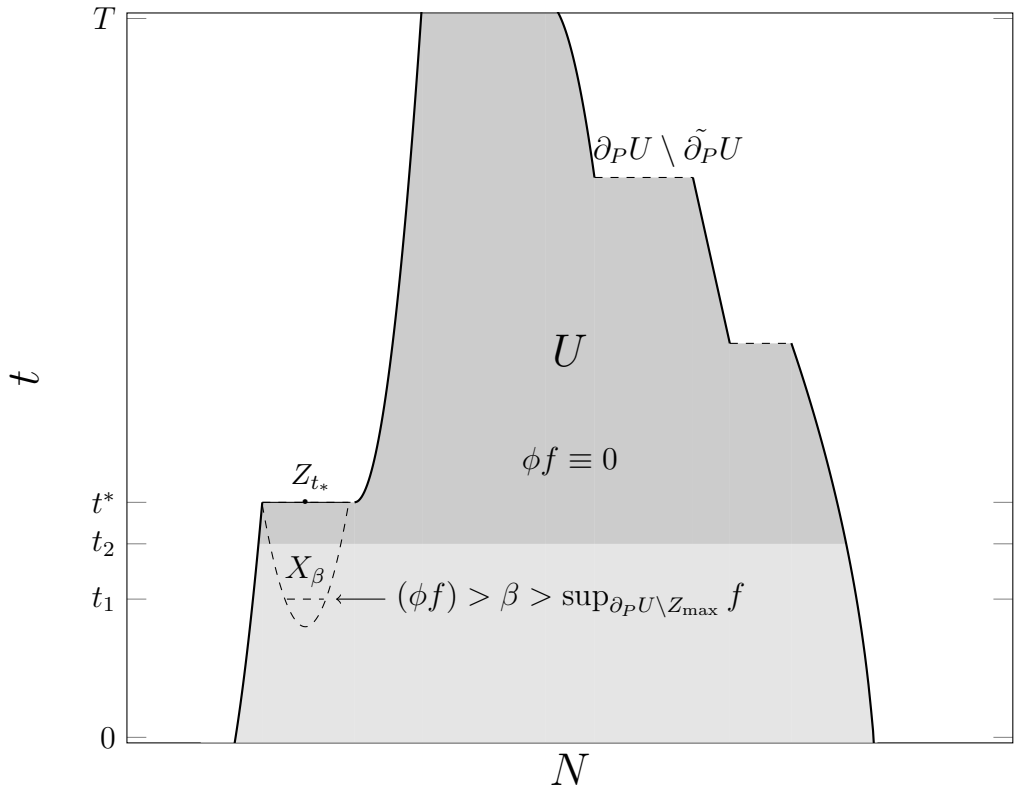

Fix a time $t_{2} \in (t_{1},t_{*})$ and choose a cutoff function $\phi: [0,T] \rightarrow [0,1]$ such that $\phi(t)=1$ when $t \in [0,t_{1}]$, $\phi'(t) < 0$ when $t \in (t_{1}, t_{2})$, and $\phi(t)=0$ when $t \in [t_{2},T]$, see Figure \ref{cutoff}. Since $(\phi f) (x, t_{1})= f(x, t_{1}) > \beta$ for $(x,t_{1}) \in X_{\beta}$, we know $\sup_{U} \phi f > \beta >0$. 

We calculate

\begin{equation*}
    \partial_{t} (\phi f) = \phi \partial_{t} f + \phi' f, \hspace{0.5cm} \Delta \phi f = \phi \Delta f, \hspace{0.5cm} \nabla (\phi f) = \phi \nabla f,
\end{equation*}

so 

\begin{equation} \label{phi*f}
    (\partial_{t} - \frac{1}{H^{2}} \Delta - \eta \cdot \nabla) (\phi f) = \phi (\partial_{t} - \frac{1}{H^{2}} \Delta - \eta \cdot \nabla) f + \phi' f.
\end{equation}

By hypothesis we have $(\partial_{t} - \frac{1}{H^{2}} \Delta - \eta \cdot \nabla) f (x,t) \leq 0$ and $\phi' \leq 0$. Since $\sup_{U} \phi f > 0 $, any interior point $(x_{0},t_{0}) \in U$ at which $\phi f$ achieves this supremum would need to satisfy

\begin{equation*}
    (\partial_{t} - \frac{1}{H^{2}} \Delta - \eta \cdot \nabla) (\phi f) (x_{0},t_{0}) < 0.
\end{equation*}

By the same argument used for the first part of \thref{noncyl}, this is impossible, and so $\sup_{U} (\phi f) = \sup_{\partial_{P} U} (\phi f)$. However, $\sup_{\partial_{P} U \setminus Z_{\max}} \phi f \leq \sup_{\partial_{P} U \setminus Z_{\max}} f < \beta$ by hypothesis, and $\phi f|_{Z_{\max}} \equiv 0$ as $\phi =0$ on $[t_{*},T)$. This would altogether yield

\begin{equation*}
    \sup_{\partial_{P} U} \phi f < \beta < \sup_{U} \phi f, 
\end{equation*}
contradicting the first part of the non-cylindrical maximum principle. Conclude then that

\begin{equation*}
    f(x,t) \leq \sup_{\tilde{\partial}_{P}U} f
\end{equation*}
on $U$. The statement may be shown for a minimum by choosing $\beta>0$ with $\inf_{U} f < \beta < \inf_{\partial_{P} U \setminus Z_{\max}} f$.

\begin{remark}
The version of this principle used in \cite{maria3} and \cite{head2019singularity} does not include the hypothesis that $U$ approaches $Z_{t_{*}}$ from below in time. However, if $U$ only touches $Z_{t_{*}}$ from above in time, $Y_{\beta} \cap \{ t < t_{*}\}$ may be empty. The corresponding cutoff function would then need to be chosen to increase with $t$, so that the last term in \eqref{phi*f} is possibly non-negative. Therefore, this additional hypothesis seems to be necessary.
\end{remark}
\subsection{Non-Star-Shaped Admissible Initial Data}
\begin{proposition} \label{non_star_shaped}
For any $n \geq 2$, there is an admissible surface $N_{0}^{n} \subset \mathbb{R}^{n+1}$ which is not star-shaped.
\end{proposition}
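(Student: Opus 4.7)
The plan is to build a $C^{2}$, $H>0$, rotationally symmetric embedded sphere $N^{\ast} \subset \mathbb{R}^{n+1}$ which is not star-shaped and whose generating graph $Y$ satisfies $\max_{\overline{L}^{\ast}} Y < n^{n/(2(n-1))} \min_{\overline{L}^{\ast}} Y$, and then to take $N_{0}$ to be the dilation of $N^{\ast}$ by a sufficiently large factor $\lambda$ in the $x_{1}$-direction. As noted after \thref{admissible}, such a dilation produces an admissible surface; since the function $n \mapsto n^{n/(2(n-1))}$ is increasing on $[2,\infty)$ with value $2$ at $n=2$, fixing the bulge-to-neck ratio $M/m = 3/2$ works uniformly across all $n \geq 2$.

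For $N^{\ast}$ I would take $Y$ to be a ``dumbbell'' profile on a long interval: a cylindrical middle $Y \equiv m = 1$ on $[-L_{c}, L_{c}]$, two $C^{2}$ transition arcs of length $\ell$ that smoothly interpolate from $m$ up to $M = 3/2$ with zero slope at both endpoints (for example a shifted cosine, with $\ell = \pi\sqrt{M(M-m)/2}$ chosen so that $Y''$ matches the sphere at each transition--hemisphere joint; any residual $C^{2}$ mismatch at the cylinder--transition joints can be removed by a standard local mollification that leaves the coarse geometry intact), and two spherical caps of radius $M$ connecting to the axis at the poles. A direct check of $H = (n-1)/(Yv) - Y''/v^{3} > 0$ is routine except on the concave-up half of each transition, where with the chosen $\ell$ one computes $Y\cdot Y'' \leq m/M < n-1$ for $n \geq 2$, forcing $H > 0$.

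Non-star-shapedness of $N^{\ast}$ I would verify by evaluating the support function $\langle F - (c,0), \nu \rangle = (Y - (x-c)Y')/v$ at the midpoint of each transition, where $|Y'|$ attains its positive maximum $\pi(M-m)/(2\ell)$. A short calculation shows this support is strictly negative precisely when $c$ lies in a half-line of the form $\{c < L_{c} - C\}$ (via the right transition) or $\{c > -L_{c} + C\}$ (via the left), for an explicit constant $C = C(M,m,\ell)$. Choosing $L_{c} > C$ makes the union of these half-lines all of $\mathbb{R}$, so $N^{\ast}$ is not star-shaped from any point on the axis; by rotational symmetry this is enough.

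For the final step, set $y_{0}(X) = Y(X/\lambda)$. Heights are preserved, so $\max_{\overline{L}_{0}} u = M$ and $\min_{\overline{L}_{0}} u = m$ on the bridge of $N_{0}$, while $v = (1 + |Y'(X/\lambda)|^{2}/\lambda^{2})^{1/2} \to 1$ uniformly as $\lambda \to \infty$, yielding the admissibility condition \eqref{neck} for all large $\lambda$. Mean convexity persists: on the interior $p = 1/(uv)$ stays bounded below by a positive constant, $k = -Y''(X/\lambda)/(\lambda^{2}v^{3}) = O(1/\lambda^{2})$, and at the poles $N_{0}$ is still locally a rescaled sphere with $H>0$. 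Finally, the substitution $x = X/\lambda$ turns the non-star-shapedness inequality $(X - \tilde c)y_{0}'(X) > y_{0}(X)$ into the corresponding inequality for $N^{\ast}$ at the center $(\tilde c/\lambda, 0)$, so the bijection $\tilde c \leftrightarrow \lambda \tilde c$ transfers non-star-shapedness from $N^{\ast}$ to $N_{0}$. The main obstacle is the first step: the shallow-bulge constraint $M/m < 2$ leaves very little room for steep transitions, so non-star-shapedness must be achieved by \emph{elongating} the neck ($L_{c}$ large) rather than deepening it.
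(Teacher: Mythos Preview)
Your plan is correct and close in spirit to the paper's construction, but organized differently. The paper builds the admissible surface \emph{directly}: it joins two unit spheres by a thin tube of radius $c\in(\tfrac12,1)$ and lets the tube length $\ell\to\infty$, choosing an explicit interpolating function whose first and second derivatives tend to zero with $\ell$; this simultaneously forces $H>0$ and $\max_{\overline L_0}(uv)/\min_{\overline L_0}u<2$, and non-star-shapedness is read off from a single chord through the origin that hits the profile twice. You instead work in two stages: first manufacture a fixed $H>0$ dumbbell $N^\ast$ with height ratio $M/m=3/2$ and verify non-star-shapedness via the support function (using the convexity of the star-shaped kernel to reduce to axis points), then invoke the remark after \thref{admissible} and dilate in $x_1$ to push $v\to 1$ and hence obtain \eqref{neck}. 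Your route is a bit more modular---the $C^2$ smoothing, the $H>0$ check, and the admissibility are decoupled---at the cost of an extra step; the paper's route is more hands-on but one-shot. One small slip: after dilation the caps are ellipsoidal, not spherical, though of course still convex with $H>0$. In fact, with your specific $\ell=\pi\sqrt{M(M-m)/2}$ one computes $\max_{\overline{L^\ast}}(uv)=M=3/2$ already, so $N^\ast$ itself is admissible and the dilation step is not strictly needed---but it does no harm.
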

\begin{proof}
We will begin with an example of a non-star-shaped admissible surface which is of class $C^{2}$. Define the domain $E_{0} \subset \mathbb{R}^{n+1}$ by

\begin{equation}
    E_{0}= \{ -10 < x_{1} < 10 | (x_{2}^{2} + \dots x_{n+1}^{2})^{\frac{1}{2}} < y(x_{1}) \},
\end{equation}
where $y: (-10,10) \rightarrow \mathbb{R}^{+}$ is the even extension of the function

\begin{equation} \label{y(x)}
    y(x) = \begin{cases} \frac{3}{4} & x \in [0,8) \\
    \frac{3}{4} + 2 (x-8)^{3} - \frac{11}{4}(x-8)^{4} + (x-8)^{5} & x \in [8,9) \\
    \sqrt{1-(x-9)^{2}} & x \in [9,10),
    \end{cases}
\end{equation}
across $x=0$. Direct computation shows that $y \in C^{2}((-10,10))$. One can also verify through either computing maxima or graphing that

\begin{eqnarray}
   \frac{3}{4} \leq y(x) (1 + |y'(x)|^{2})^{\frac{1}{2}} &\leq& \frac{5}{4} \label{est1} \\
    \frac{y''(x)y(x)}{1+|y'(x)|^{2}} &<& 1 \label{est2},
\end{eqnarray}
for each $x \in (-10,10)$. The left-hand side of \eqref{est2} corresponds to the ratio $-\frac{k}{p}$ of the principal curvatures of $N_{0}$, and so this bound implies

\begin{equation} \label{H_pos}
    \min_{N_{0}} H >0,
\end{equation}
The function in \eqref{est1} corresponds to $p^{-1}$ on $N_{0}$, meaning
\begin{equation} \label{p_good}
 \frac{\max_{N_{0}} p}{\min_{N_{0}} p} < 2 \leq n^{\frac{n}{2(n-1)}}, \hspace{0.5cm} n \geq 2,
\end{equation}
so altogether $N_{0}$ is mean-convex and admissible. 

To demonstrate that $N_{0}$ is not star-shaped, it is sufficient to consider graph of the function $y$ in the $x-y$ plane. In this plane, the line segment connecting $(0,0)$ to $(9,1)$, parametrized by $L(x)=\frac{1}{9}x$, satisfies

\begin{equation*}
    L(8) = \frac{8}{9} > \frac{3}{4}= y(8),
\end{equation*}
meaning this segment must intersect the graph of $y$ at another point by the intermediate value property. Furthermore, for any $x_{0} \leq 0$ and $0 \leq y_{0} < y(x_{0})$, the line segment connecting $(x_{0},y_{0})$ to $(9,1)$ satisfies 

\begin{equation*}
    L_{(x_{0},y_{0})} (x) \geq L(x), \hspace{0.3cm} x \in [0,9],
\end{equation*}
for the $L(x)$ defined above. So $L_{(x_{0},y_{0})}(8) > \frac{3}{4}$, and once again by the intermediate value property this segment intersects the graph somewhere else. If we choose $x_{0} \geq 0$, $0 \leq y_{0} < y(x_{0})$, the segment connecting $(x_{0},y_{0})$ to $(-9,1)$ must also intersect since $y$ is even.

\begin{figure}
    \centering
    
    \textbf{A Non-Star-Shaped $N_{0}$: The Generating Graph}
    \vspace{0.3cm}
    \begin{center}
    \begin{tikzpicture}[xscale=1.4,yscale=1.4]
    \begin{axis}[ylabel=\footnotesize{$y(x)$}, xlabel=x, y=0.5cm, x=0.5cm,
          xmax=10.5, xmin=-10.5, ymax=2, ymin=-0.25,
          axis lines=middle,
          restrict y to domain=-10:10, xtick={-10,10},ytick={0, 0.75}, yticklabels={0,$\frac{3}{4}$}]
        \addplot[name path=f, domain = -8:8, smooth]{(0.75)};
        \addplot[name path=o, domain=-8:8, smooth]{(0)};
         \addplot [
        thick,
        color=gray,
        fill=gray, 
        fill opacity=0.4
    ]fill between[of=f and o];
        \addplot[name path=g, domain = 8:9, smooth]{0.75 + 2*(x-8)^(3) -2.75* (x-8)^(4) + (x-8)^(5)};
        \addplot[name path=go, domain= 8:9, smooth]{0};
           \addplot [
        thick,
        color=gray,
        fill=gray, 
        fill opacity=0.4
    ]fill between[of=g and go];
        \addplot[name path=h, domain = -10:-9, smooth]{(1-(x+9)^(2))^(0.5)};
        \addplot[name path=ho, domain= -10:-9, smooth]{0};
           \addplot [
        thick,
        color=gray,
        fill=gray, 
        fill opacity=0.4
    ]fill between[of=h and ho];
         \addplot[name path=i, domain = 9:10, smooth]{(1-(x-9)^(2))^(0.5)};
         \addplot[ name path=io, domain= 9:10, smooth]{0};
            \addplot [
        thick,
        color=gray,
        fill=gray, 
        fill opacity=0.4
    ]fill between[of=i and io];
         \addplot[name path=j, domain = -9:-8, smooth]{0.75 - 2*(x+8)^(3) -2.75* (x+8)^(4) - (x+8)^(5)};
         \addplot[name path=jo, domain= -9: -8, smooth]{0};
            \addplot [
        thick,
        color=gray,
        fill=gray, 
        fill opacity=0.4
    ]fill between[of=j and jo];
        \addplot[domain=-3:9, smooth, color=blue] {0.0625*(x+3) + 0.25};
        \draw[color=blue] (-3,0.25) node[anchor=east]{\small{\color{blue}{$(x_{0},y_{0})$}}};
        \draw[color=blue] (9,1) node[anchor=south]{\small{\color{blue}{$(9,1)$}}};
    \end{axis}
      \end{tikzpicture}
    \end{center}
    \caption{A graph of the function $y(x)$ defined in \eqref{y(x)}. The admissible surface generated by $y(x)$ is not star-shaped because of its long neck.}
    \label{fig:my_label}
\end{figure}

For any $x_{0} \in E_{0}$, let $P \subset \mathbb{R}^{n+1}$ be the $2$-plane containing $x_{0}$ and $X_{1}$ (if $x_{0}$ lies on this axis, let $P$ be any $2$-plane containing $X_{1}$). Choose Cartesian coordinates $(x,y)$ on $P$ so that $x$-axis is $X_{1}$ and the $y$-coordinate of $x_{0}$ is non-negative. Then either the line segment connecting $x_{0}$ to $(9,1) \in P$ or the line segment connecting $x_{0}$ to $(-9,1) \in P$ is not contained in $E_{0}$. Since these points lie on $N_{0}$, $N_{0}$ cannot be star-shaped with respect to $x_{0}$.

To conclude, we must show that $N_{0}$ is approximated in a $C^{2}$ sense by smooth rotationally symmetric hypersurfaces. For $P$ as above, $N_{0} \cap P= \gamma$ is a closed plane curve parametrized over $\sfrac{[0,1]}{\mathbb{Z}} \simeq \mathbb{S}^{1}$ by

\begin{equation}
    \gamma(t)= (x_{1}(t),x_{2}(t))= \begin{cases}
        (10(4t-1), y(10(4t-1))) & 0 \leq t \leq \frac{1}{2},  \\
        (10(3-4t), -y(10(3-4t)) & \frac{1}{2} \leq t \leq 1.
    \end{cases}
\end{equation}
By convolving with a symmetric mollifier $\psi_{\epsilon}$ over $\mathbb{S}^{1}$, we obtain smooth functions $x^{\epsilon}_{1}(t)$ and $x^{\epsilon}_{2}(t)$ on $\mathbb{S}^{1}$ which are repsectively even and odd about $t=\frac{1}{2}$. For $\epsilon$ small enough, the corresponding smooth curve $\gamma^{\epsilon}(t)=(x^{\epsilon}_{1}(t),x^{\epsilon}_{2}(t))$ in $P$ is uniformally convex over small intervals containing $t=0$ and $t=\frac{1}{2}$, and away from these intervals $x^{\epsilon}_{1}(t)$ and $x^{\epsilon}_{2}(t)$ converge in uniform $C^{2}$ topology to $x_{1}(t)$ and $x_{2}(t)$ as $\epsilon \rightarrow 0$. Then $\gamma_{\epsilon}$ is the cross section $N^{\epsilon}_{0} \cap P$ of a smooth, rotationally symmetric hypersurface $N^{\epsilon}_{0}$ which inherits properties \eqref{H_pos} and \eqref{p_good} and fails to be star-shaped. 
\end{proof}

\printbibliography[title={References}]

\begin{center}
\textnormal{ \large National Center for Theoretical Sciences, Mathematics Division \\
National Taiwan University \\
Taipei City, Taiwan 10617\\
e-mail: bharvie@ncts.ntu.edu.tw}\\
\end{center}
\end{document}